\numberwithin{equation}{section}
\numberwithin{equation}{subsection}
\newtheorem{thm}{Theorem}[section]
\newtheorem{corollary}[thm]{Corollary}
\newtheorem{lemma}[thm]{Lemma}
\newtheorem{definition}[thm]{Definition}
\newtheorem*{remark*}{Remark}
\newtheorem{remark}[thm]{Remark}
\newcommand{\Hom}{{\mathrm{Hom}}}
\newcommand{\tra}{{\mathrm{tra}}}
\newcommand{\res}{{\mathrm{res}}}
\newcommand{\Ho}{{\mathrm{H}}}
\newcommand{\vt}{{\bar{\alpha}}^{\prime}}
\newcommand{\irr}{\mathrm{Irr}}
\title[On the Twisted group ring isomorphism  problem]{On the Twisted group ring isomorphism  problem for a class of groups }
\author{Sumana Hatui}
\address{School of Mathematical Sciences, National Institute of Science Education and Research, An OCC of Homi Bhabha National Institute, Bhubaneswar 752050, Odisha, India}
\email{sumanahatui@niser.ac.in}
\author{Gurleen Kaur}
\address{Department of Mathematics, Indian Institute of Technology Ropar, Punjab 140001,India}
\email{gurleenkaur992gk@gmail.com}
\author{Sahanawaj Sabnam}
\address{School of Mathematical Sciences, National Institute of Science Education and Research, An OCC of Homi Bhabha National Institute, Bhubaneswar 752050, Odisha, India}
\email{sahanawaj.sabnam@niser.ac.in}
\begin{document}
\subjclass[2010]{16S35, 20C25, 20E99}
\date{}
\keywords{twisted group algebras, projective representations, representation groups, second cohomology groups} 
\begin{abstract}
 The twisted group ring isomorphism problem (TGRIP) is a variation of the classical group ring isomorphism problem. It asks whether the ring structure of the twisted group ring determines the group up to isomorphism. 
In this article, we study the TGRIP for direct product and central product of groups. 
%For the direct product of groups, we propose several conditions to conclude whether TGRIP for the groups can be decided by assuming TGRIP for its components.
We provide some criteria to answer the TGRIP for groups by answering the TGRIP for the associated quotients. As an application of these results, we provide several examples. 
Finally, we answer the TGRIP for extra-special $p$-groups, and for the groups of order $p^5$, where $p \geq 5$ is a prime, except a list of five groups. %\sumana{Include MSC and keywords}
\end{abstract}
\maketitle

\section{Introduction}
The group ring $RG$ of a finite group $G$ over a commutative ring $R$ is an important area of study in representation theory. The classical group ring isomorphism problem asks whether the $R$-ring structure of $RG$ determines the group $G$ up to isomorphism. Specifically, if $RG$ and $RH$ are isomorphic as $R$-rings, does it imply that the groups $G$ and $H$ are also isomorphic? An obvious case of non-isomorphic groups having isomorphic group rings is that of two abelian groups of the same order over any field containing roots of unity of that order \cite{PW}. In 1971, Dade constructed an example of two non-isomorphic groups  whose group algebras are isomorphic over any field. In both examples, the corresponding integral group rings are not isomorphic. In \cite{Hertweck}, Hertweck provided an example of two non-isomorphic groups of even order whose integral group rings are isomorphic. However, the (integral) group ring isomorphism problem is still open for groups of odd order. The classical group ring isomorphism problem has been studied by several authors, most notably in 
\cite{MR142622, mr602901,margolis2018finite, MR812486}.
%\cite{Jespers1,PGS,MS,MR4145799,MR4512533}\sumana{references for TGRIP}. 

 %Twisted group ring isomorphism problem TGRIP is important as this problem answers the question that which groups have same set of projective character degrees with?
  %Now we describe The Twisted group ring isomorphism problem TGRIP which was first proposed and studied by L. Margolis and O. Schnabel in \cite{margolis}. This problem is a variation of the classical group ring isomorphism problem. 
  We denote the set of $2$-cocycles of $G$ over the unit group $R^\times$ by $Z^2(G, R^\times)$ and the second cohomology group of $G$  by $\Ho^2(G, R^\times)$.
 For a $2$-cocycle $\alpha$ of $G$, $[\alpha]$ denotes the corresponding cohomology class in $\Ho^2(G, R^\times)$, where $R^\times$ is a trivial $G$-module. 
  In \cite{MS}, Margolis and Schnabel introduced a twisted analog of the classical  group ring isomorphism problem, namely the twisted group ring isomorphism problem. 
 For a group $G$, and for $\alpha\in Z^2(G, R^\times)$,  the twisted group ring $R^\alpha G$ of group $G$ over the commutative ring $R$ is $R^\alpha G = \{\sum_{g \in G} x_g \bar{g} \mid x_g \in R \}$,  a free $R$-module with the basis $\{\bar{g}, g \in G\}$ and with the multiplication on the basis elements  given by $\bar{g} \bar{h}=\alpha(g,h)\overline{gh}$
(see \cite[Section 2, Page 77]{Karpilovsky} for further details). 
 Twisted group rings play an important role in the study of projective representation of groups, which was shown by Schur in his pioneering works \cite{IS4, IS7, IS11}. 
 
\begin{definition}
    We say that $G  \sim_R H$ via the map $\phi$ if there is an isomorphism  $\phi: \Ho^2(G, R^\times)  \to  \Ho^2(H, R^\times)$ such  that $R^\alpha G\cong  R^{\phi(\alpha)} H$ for each  $[\alpha] \in  \Ho^2(G, R^\times)$.
\end{definition}
The twisted group ring isomorphism problem (TGRIP) asks to determine the equivalence classes of groups of order $n$ under the equivalence relation `$ \sim_R$'. 

In \cite{MS}, an answer to the TGRIP over $\mathbb C$ has been provided for finite abelian groups, some groups of central type, and groups of cardinality $p^4$ and $p^2q^2$ for $p,q$ primes. In \cite{PGS}, the TGRIP has been studied for finite non-abelian $p$-groups $G$ by fixing the order of the Schur multiplier $\Ho^2(G, \mathbb C^\times)$.
Also, the TGRIP over different fields has been investigated in \cite{Jespers1,MR4145799,MR4512533}. 

In this article, the TGRIP has been investigated for finite groups $G$ over  $\mathbb{C}$.
We provide a criterion to solve the TGRIP for  groups by solving it for the quotient groups. 
We  discuss the TGRIP for  direct product  and central product of groups. 
First, we prove that to study the TGRIP for finite nilpotent groups, it is enough to study it for $p$-groups.
For the direct product of groups, we propose several conditions to answer the TGRIP of the group by answering for its components, which  help to construct many examples of groups.
%and along this way, we give examples of groups $G$, $H$ such that $G \sim_{\mathbb{C}} H$.
  Also, we give a criterion for studying the TGRIP of the central product of groups. As an application,  we discuss some examples and provide an answer for extra-special $p$-groups. Finally, we investigate the TGRIP for the groups of order $p^5$, where $p \geq 5$ is a prime, except a list of few groups. 

%As an application, we provide isoclinism classes of groups of order $p^6$ and,   we worked on extra-special $p$-groups. 
%In \cite{margolis}, TGRIP  of abelian groups and groups of order $p^4$ have been studied for the ring $R=\mathbb C$. In \cite{PGS}, authors answers TGRIP for finite non-abelian $p$-groups by fixing the order of the Schur multiplier.
%In this article, we study TGRIP for $R=\mathbb C$. We answers TGRIP for groups by assuming TGRIP for the quotient groups.
%We discuss  TGRIP problem for direct product of groups whether it can be decided by assuming TGRIP for the components, and along this way we give examples. We also study  TGRIP  for  central product of groups and as an application we discuss many examples. We answers TGRIP for extra-special $p$-groups. We also answers TGRIP for groups of order $p^5$ except some groups.

%%%%%%%%%%%%%%%%%%%%%%%

Before stating our main results, we define some notation.
The subgroups $G'$ and $Z(G)$ denote the derived subgroup and center of a group $G$. %\sumana{some places Z(G), we have to check}
For $x,y \in G,$ the commutator $x^{-1}y^{-1}xy$ is denoted by $[x,y]$. We use $G^p$ to denote the subgroup of $G$ generated by $p$-th power of the elements of $G$ and
$C_{p^n}$ denotes the cyclic group of order $p^n$.
Now we recall three homomorphisms: restriction, inflation, and transgression.  Let $H \leq G$ and $\alpha \in Z^{2}(G,\mathbb{C}^{\times})$. Then the restriction $\alpha'=\alpha|_{H \times H}\in Z^{2}(H,\mathbb{C}^{\times})$ and hence induces a homomorphism 
$\res^G_H: \Ho^{2}(G,\mathbb{C}^{\times}) \to \Ho^{2}(H,\mathbb{C}^{\times})$ by $\res^G_H([\alpha])=[\alpha']$, known as the restriction map. 
Suppose $H \triangleleft$ $G$. For a given $\alpha \in Z^{2}(G/H,\mathbb{C}^{\times})$, define $\alpha': G \times G \to \mathbb C^\times$ by $\alpha'(x,y)=\alpha(xH,yH)$. Then the assignment $[\alpha] \mapsto [\alpha']$ induces a homomorphism $\inf: \Ho^{2}(G/H,\mathbb{C}^{\times}) \to \Ho^{2}(G,\mathbb{C}^{\times})$, called the inflation map. 
Let $1 \to H \to G^{*} \xrightarrow{f} G \to 1$ be a central  extension and  $\mu$ be a section of $f$. For any $\chi \in \operatorname{Hom}(H,\mathbb{C}^{\times})$, define $\alpha_{\chi}: G \times G \to \mathbb{C}^{\times}$ by $\alpha_{\chi}(x,y)=\chi(\mu(x)\mu(y)\mu(xy)^{-1})$. Then the assignment $\chi \mapsto [\alpha_{\chi}]$ induces a homomorphism $\tra: \operatorname{Hom}(H,\mathbb{C}^{\times}) \to \Ho^{2}(G,\mathbb{C}^{\times})$, called the transgression map associated with the given central extension.
%\sumana{Define transgression, inflation and restriction homomorphism.}\\
%\sumana{$\mathbb C^*$ will be replaced by $\mathbb C^\times$}

Our first main result is the following, which shows that under certain conditions, the TGRIP for certain quotient group answers that for the group.
Here, $\tra_j$ and $\inf_j$, $j=1,2$, denote the corresponding transgression and inflation homomorphisms.
\begin{thm}\label{mainthm}
	Let $G_1, G_2$ be two groups and  $Z_j$ be  central subgroups of $G_j$, for $j=1,2$, such that the sequences 
	\[
	1  \xrightarrow{}   \mathrm{Hom}(Z_{j}, \mathbb C^\times)  \xrightarrow{\mathrm{tra}_j}  \Ho^2(G_j/Z_{j}, \mathbb C^\times)   \xrightarrow{\mathrm{inf}_j}  \Ho^2(G_j, \mathbb C^\times)  \xrightarrow{}  1 
	\]
	are exact.
	Suppose there is an isomorphism $\bar{i}:  \mathrm{Hom}(Z_{1}, \mathbb C^\times) \to  \mathrm{Hom}(Z_{2}, \mathbb C^\times)$ and  $G_1/Z_1 \sim_\mathbb C G_2/Z_2$ via the map $\bar{\phi}$ such that the following diagram is commutative.	
	\begin{figure}[H]
	\[
	\xymatrix{ 
		1 \ar[r] &  \mathrm{Hom}(Z_{1}, \mathbb C^\times) \ar[d]^{\bar{i}} \ar[r] ^{\mathrm{tra}_{1}} & \Ho^2(G_1/Z_{1}, \mathbb C^\times)  \ar[d]^{\bar{\phi}} \\
		1 \ar[r] &  \mathrm{Hom}(Z_{2}, \mathbb C^\times) \ar[r] ^{\mathrm{tra}_{2}} & \Ho^2(G_2/Z_{2}, \mathbb C^\times)\\
	}
	\] \caption{Diagram 1}
	\end{figure}
	
	Then $G_1 \sim_\mathbb C G_2$.
	\end{thm}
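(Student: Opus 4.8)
The plan is to transfer the relation $\sim_{\mathbb C}$ from the quotients $(G_1/Z_1,\,G_2/Z_2)$ to $(G_1,G_2)$ by comparing the block decompositions of twisted group algebras induced by the central subgroups $Z_j$. The crucial structural input, which I would establish (or cite) as a preliminary lemma, is the following. Fix a normalized cocycle $\beta$ on $G_j/Z_j$ and let $\alpha'=\mathrm{inf}_j(\beta)$ be its inflation, so that $\alpha'(x,y)=\beta(xZ_j,yZ_j)$. Since $\alpha'$ is trivial as soon as one argument lies in $Z_j$, the set $\{\bar z : z\in Z_j\}$ spans a copy of the ordinary group algebra $\mathbb{C}Z_j$ inside the center of $\mathbb{C}^{\alpha'}G_j$. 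The primitive idempotents of $\mathbb{C}Z_j$ are indexed by $\chi\in\Hom(Z_j,\mathbb{C}^\times)$, and choosing a section $\mu$ of $G_j\to G_j/Z_j$ one computes on the basis $\{\overline{\mu(x)}\}$ that the $\chi$-block is a twisted group algebra of $G_j/Z_j$ whose cocycle is $\beta\cdot\alpha_\chi$, where $\alpha_\chi$ is exactly the transgression cocycle from the definition of $\mathrm{tra}_j$. This yields
\[
\mathbb{C}^{\mathrm{inf}_j(\beta)}G_j \;\cong\; \bigoplus_{\chi\in\Hom(Z_j,\mathbb{C}^\times)} \mathbb{C}^{\,\beta\cdot\mathrm{tra}_j(\chi)}\big(G_j/Z_j\big).
\]
Verifying this block computation is the technical heart, and the main obstacle, of the argument.

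Granting the decomposition, here is how I would assemble the proof. Given $[\alpha]\in\Ho^2(G_1,\mathbb{C}^\times)$, surjectivity of $\mathrm{inf}_1$ provides $[\beta]\in\Ho^2(G_1/Z_1,\mathbb{C}^\times)$ with $[\alpha]=\mathrm{inf}_1([\beta])$. I then set $[\beta_2]:=\bar\phi([\beta])$ and \emph{define} $\phi([\alpha]):=\mathrm{inf}_2([\beta_2])$. Applying the decomposition to $G_1$, then the hypothesis $G_1/Z_1\sim_{\mathbb C}G_2/Z_2$ to each summand, and finally the homomorphism property of $\bar\phi$ together with the commutativity $\bar\phi\circ\mathrm{tra}_1=\mathrm{tra}_2\circ\bar i$ from Diagram 1, I obtain
\[
\mathbb{C}^{\alpha}G_1 \;\cong\; \bigoplus_{\chi} \mathbb{C}^{\,\beta\cdot\mathrm{tra}_1(\chi)}(G_1/Z_1) \;\cong\; \bigoplus_{\chi} \mathbb{C}^{\,\beta_2\cdot\mathrm{tra}_2(\bar i(\chi))}(G_2/Z_2).
\]
Because $\bar i$ is a bijection, as $\chi$ ranges over $\Hom(Z_1,\mathbb{C}^\times)$ the character $\bar i(\chi)$ ranges over all of $\Hom(Z_2,\mathbb{C}^\times)$, so the right-hand side is precisely the decomposition of $\mathbb{C}^{\mathrm{inf}_2(\beta_2)}G_2=\mathbb{C}^{\phi(\alpha)}G_2$. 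Hence $\mathbb{C}^{\alpha}G_1\cong\mathbb{C}^{\phi(\alpha)}G_2$ for every class, as required.

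It remains to check that $\phi$ is a well-defined isomorphism, which is pure diagram chasing powered by the exactness hypotheses. For well-definedness, if $\mathrm{inf}_1([\beta])=\mathrm{inf}_1([\beta'])$ then $[\beta][\beta']^{-1}\in\ker(\mathrm{inf}_1)=\mathrm{Im}(\mathrm{tra}_1)$ by exactness, say $[\beta]=[\beta']\,\mathrm{tra}_1(\chi)$; applying $\bar\phi$ and then $\mathrm{inf}_2$, the commutative diagram turns the correction term into $\mathrm{inf}_2(\mathrm{tra}_2(\bar i(\chi)))$, which is trivial again by exactness, so $\mathrm{inf}_2(\bar\phi([\beta]))$ is independent of the chosen preimage. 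For bijectivity, I note that $\phi$ factors as the composite of the isomorphism $\Ho^2(G_1,\mathbb{C}^\times)\cong\Ho^2(G_1/Z_1,\mathbb{C}^\times)/\mathrm{Im}(\mathrm{tra}_1)$ coming from exactness, the isomorphism of quotients induced by $\bar\phi$ (well defined and bijective because the diagram and the isomorphism $\bar i$ force $\bar\phi$ to carry $\mathrm{Im}(\mathrm{tra}_1)$ onto $\mathrm{Im}(\mathrm{tra}_2)$), and the analogous isomorphism $\Ho^2(G_2/Z_2,\mathbb{C}^\times)/\mathrm{Im}(\mathrm{tra}_2)\cong\Ho^2(G_2,\mathbb{C}^\times)$. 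Thus $\phi$ is an isomorphism satisfying $\mathbb{C}^{\alpha}G_1\cong\mathbb{C}^{\phi(\alpha)}G_2$ for all $[\alpha]$, that is, $G_1\sim_{\mathbb C}G_2$.
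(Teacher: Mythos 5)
Your proposal is correct and takes essentially the same approach as the paper: you define the same induced map $[\alpha]=\mathrm{inf}_1([\beta])\mapsto \mathrm{inf}_2(\bar\phi([\beta]))$, establish well-definedness and bijectivity by the same exactness-plus-Diagram-1 chase, and conclude via the same block decomposition of $\mathbb{C}^{\mathrm{inf}_j(\beta)}G_j$ over the fiber of the inflation map. The only difference is presentational: the paper cites that decomposition as Theorem \ref{dimpreserve} with the fiber indexed by cohomology classes, whereas you index it by $\mathrm{Hom}(Z_j,\mathbb{C}^\times)$ via transgression (equivalent here, since exactness makes $\mathrm{tra}_j$ injective) and sketch its proof with central idempotents instead of citing it.
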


 We say that an isomorphism $\phi: G_2 \to G_1$ induced an isomorphism $\bar{\phi}: \Ho^2(G_1, \mathbb C^\times) \to  \Ho^2(G_2, \mathbb C^\times)$ if $\bar{\phi}$ is defined by $\bar{\phi}([\alpha])=[\beta]$ such that $\beta (g, h)=\alpha(\phi(g),\phi(h))$ for $g, h \in G_2$.
Now, the  next corollary   follows immediately from the above result.
 \begin{corollary}\label{result1}
  Let $G_1, G_2$ be two groups and  $Z_j$ be central subgroups of $G_j$ such that for $j=1,2$, the sequences 
 \[
 1  \xrightarrow{}   \mathrm{Hom}(Z_{j}, \mathbb C^\times)  \xrightarrow{\mathrm{tra_j}}  \Ho^2(G_j/Z_{j}, \mathbb C^\times)   \xrightarrow{\mathrm{inf}_j}  \Ho^2(G_j, \mathbb C^\times)  \xrightarrow{}  1 
\]
 are exact. Suppose
 there are isomorphisms $i: Z_2\to Z_1$ and $\phi: G_2/Z_2 \to G_1/Z_1$ such that Diagram $1$ is commutative for the induced isomorphisms $\bar{i}$ and $\bar{\phi}$.
	Then $G_1 \sim_\mathbb C G_2$.
 \end{corollary}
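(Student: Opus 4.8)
The plan is to verify that the group isomorphisms $i$ and $\phi$ supply exactly the data required by Theorem \ref{mainthm} at the level of $\mathrm{Hom}$ and $\Ho^2$, and then invoke that theorem directly. First I would record the two induced maps. From $i: Z_2 \to Z_1$ I obtain $\bar{i}: \mathrm{Hom}(Z_1,\mathbb{C}^\times) \to \mathrm{Hom}(Z_2,\mathbb{C}^\times)$ by precomposition, $\bar{i}(\chi) = \chi\circ i$; since $i$ is a group isomorphism and $\mathrm{Hom}(-,\mathbb{C}^\times)$ is a contravariant functor carrying isomorphisms to isomorphisms, $\bar{i}$ is an isomorphism. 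Likewise $\phi: G_2/Z_2 \to G_1/Z_1$ induces $\bar{\phi}: \Ho^2(G_1/Z_1,\mathbb{C}^\times) \to \Ho^2(G_2/Z_2,\mathbb{C}^\times)$ as defined in the paragraph preceding the statement, and by functoriality of $\Ho^2(-,\mathbb{C}^\times)$ this is again an isomorphism.

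The only genuine point to establish is that this induced $\bar{\phi}$ actually witnesses the relation $G_1/Z_1 \sim_\mathbb{C} G_2/Z_2$, rather than being merely an abstract cohomology isomorphism. For this I would fix a class $[\alpha] \in \Ho^2(G_1/Z_1,\mathbb{C}^\times)$ and write $\bar{\phi}([\alpha]) = [\beta]$, where $\beta(g,h) = \alpha(\phi(g),\phi(h))$ for $g,h \in G_2/Z_2$. I would then check that the $\mathbb{C}$-linear extension of the basis assignment $\bar{g} \mapsto \overline{\phi(g)}$ is a ring isomorphism $\mathbb{C}^\beta(G_2/Z_2) \to \mathbb{C}^\alpha(G_1/Z_1)$: on basis elements, $\overline{\phi(g)}\,\overline{\phi(h)} = \alpha(\phi(g),\phi(h))\,\overline{\phi(gh)} = \beta(g,h)\,\overline{\phi(gh)}$, which is the image of $\bar{g}\bar{h} = \beta(g,h)\overline{gh}$, while bijectivity is immediate because $\phi$ is a bijection. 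Thus $\mathbb{C}^\alpha(G_1/Z_1) \cong \mathbb{C}^{\bar{\phi}(\alpha)}(G_2/Z_2)$ for every class, which is precisely the statement $G_1/Z_1 \sim_\mathbb{C} G_2/Z_2$ via $\bar{\phi}$.

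Finally, the exact sequences are assumed in the hypothesis, and the commutativity of Diagram $1$ for the induced maps $\bar{i}$ and $\bar{\phi}$ is also given by hypothesis. Hence all assumptions of Theorem \ref{mainthm} are satisfied by the pair $(\bar{i},\bar{\phi})$, and applying it yields $G_1 \sim_\mathbb{C} G_2$. I expect no real obstacle here: the content is entirely in Theorem \ref{mainthm}, and the only nontrivial verification is the routine cocycle computation showing that a group isomorphism between the quotients induces the equivalence $\sim_\mathbb{C}$ cocycle-class by cocycle-class.
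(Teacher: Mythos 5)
Your proposal is correct and follows exactly the route the paper intends: the paper states that the corollary ``follows immediately'' from Theorem \ref{mainthm}, and your argument simply makes explicit the routine verification it omits, namely that the basis map $\bar{g}\mapsto\overline{\phi(g)}$ gives $\mathbb{C}^{\bar{\phi}(\alpha)}(G_2/Z_2)\cong\mathbb{C}^{\alpha}(G_1/Z_1)$ for every class, so that $G_1/Z_1\sim_{\mathbb{C}}G_2/Z_2$ via $\bar{\phi}$ and Theorem \ref{mainthm} applies.
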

 
\noindent  A proof of Theorem \ref{mainthm} is given in Section \ref{Mainresult}. As an application of these results, in Theorem \ref{application1}, we  study the TGRIP for some groups of order $p^6$, where $p$ is an odd prime. We also use Corollary \ref{result1}  to solve the TGRIP for some non-abelian groups of order $p^5$, where $p$ is an odd prime (see proof of $(xvi)$ of Theorem \ref{groupsorderp5}).

In Section \ref{section1}, we study the TGRIP for the direct product of groups. Since every finite nilpotent group is isomorphic to the  direct product of its Sylow $p$-subgroups,  the following result tells that to study the TGRIP for finite nilpotent groups, it is enough to study the TGRIP  for $p$-groups. 

\begin{thm}\label{direct_product}Let $G=G_{1} \times G_{2} \times \cdots \times G_{r}$ and $H=H_{1}\times H_{2} \times \cdots \times H_{r}$ be two groups such that $|G_{i}|=|H_{i}|=k_{i}$ and $(k_{i},k_{j})=1$, if $i \neq j$. Then $G \sim_{\mathbb{C}} H$ if and only if $G_{i} \sim_{\mathbb{C}} H_{i}$ for $i=1,2,\cdots,r$.
\end{thm}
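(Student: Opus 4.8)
The plan is to reduce everything to the behaviour of $\Ho^2(-,\mathbb{C}^\times)$ and of the twisted group algebras under the coprime direct product decomposition, and then to compare the multisets of degrees of simple components, which over $\mathbb{C}$ determine a semisimple algebra completely. The two structural inputs I would record first are these. Since the $k_i$ are pairwise coprime, the Künneth-type decomposition of the second cohomology of a direct product collapses (the cross terms $\mathrm{Hom}(G_i^{\mathrm{ab}}\otimes G_j^{\mathrm{ab}},\mathbb{C}^\times)$ vanish for $i\neq j$), giving a natural isomorphism $\Ho^2(G,\mathbb{C}^\times)\cong\prod_{i=1}^r\Ho^2(G_i,\mathbb{C}^\times)$, and likewise for $H$. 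Moreover, if $[\alpha]$ corresponds to $([\alpha_1],\dots,[\alpha_r])$ under this isomorphism, then representing $[\alpha]$ by the product cocycle $\alpha((g_i),(h_i))=\prod_i\alpha_i(g_i,h_i)$ yields a $\mathbb{C}$-algebra isomorphism $\mathbb{C}^\alpha G\cong\mathbb{C}^{\alpha_1}G_1\otimes_{\mathbb{C}}\cdots\otimes_{\mathbb{C}}\mathbb{C}^{\alpha_r}G_r$. As each $\mathbb{C}^{\alpha_i}G_i$ is semisimple (characteristic zero), its isomorphism type is encoded by the multiset $\mathrm{cd}(\mathbb{C}^{\alpha_i}G_i)$ of degrees of its matrix components, and for a tensor product these multisets multiply elementwise.

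For the forward implication I would argue directly: given equivalences $G_i\sim_{\mathbb{C}}H_i$ via isomorphisms $\phi_i\colon\Ho^2(G_i,\mathbb{C}^\times)\to\Ho^2(H_i,\mathbb{C}^\times)$, set $\phi=\prod_i\phi_i$ under the decompositions above. For $[\alpha]$ corresponding to $([\alpha_i])$ one then has $\mathbb{C}^\alpha G\cong\bigotimes_i\mathbb{C}^{\alpha_i}G_i\cong\bigotimes_i\mathbb{C}^{\phi_i(\alpha_i)}H_i\cong\mathbb{C}^{\phi(\alpha)}H$, whence $G\sim_{\mathbb{C}}H$.

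The reverse implication is the substantive half. The first point is that the product decomposition of $\Ho^2$ is forced on \emph{any} isomorphism: the Schur multiplier $\Ho^2(G_i,\mathbb{C}^\times)$ is a finite abelian group all of whose prime divisors divide $k_i$, so (pairwise coprimality) $\prod_i\Ho^2(G_i,\mathbb{C}^\times)$ is exactly the decomposition of $\Ho^2(G,\mathbb{C}^\times)$ into its Hall $\pi_i$-components, where $\pi_i$ is the set of primes dividing $k_i$, and the same holds for $H$. Since any group isomorphism preserves primary components, the given $\phi$ restricts to isomorphisms $\phi_i\colon\Ho^2(G_i,\mathbb{C}^\times)\to\Ho^2(H_i,\mathbb{C}^\times)$ with $\phi=\prod_i\phi_i$. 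To descend the algebra isomorphisms, fix $i$ and $[\alpha_i]$, and take $[\alpha]$ with $i$-th coordinate $[\alpha_i]$ and all others trivial; then $\phi(\alpha)$ has $i$-th coordinate $\phi_i(\alpha_i)$ and is trivial elsewhere, so the hypothesis $\mathbb{C}^\alpha G\cong\mathbb{C}^{\phi(\alpha)}H$ reads $\mathbb{C}^{\alpha_i}G_i\otimes\bigotimes_{j\neq i}\mathbb{C}G_j\cong\mathbb{C}^{\phi_i(\alpha_i)}H_i\otimes\bigotimes_{j\neq i}\mathbb{C}H_j$, i.e.\ equality of the two multisets of component degrees. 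The decisive arithmetic input is that every degree of an irreducible projective representation of a group divides its order (a theorem of Ito applied to the central subgroup $M\cong\Ho^2(-,\mathbb{C}^\times)$ of a Schur cover, so that $\deg\mid[\,G^{*}:M\,]=|G|$); hence each degree coming from $G_i$ is a $\pi_i$-number and each from $G_j$ a $\pi_j$-number. Because the $\pi_\ell$ are pairwise disjoint, every component degree factors uniquely into its $\pi_\ell$-parts, and taking $\pi_i$-parts turns the multiset equality into $\bigl(\prod_{j\neq i}r(G_j)\bigr)\cdot\mathrm{cd}(\mathbb{C}^{\alpha_i}G_i)=\bigl(\prod_{j\neq i}r(H_j)\bigr)\cdot\mathrm{cd}(\mathbb{C}^{\phi_i(\alpha_i)}H_i)$, where $r(\cdot)$ counts simple components and the integers scale multiplicities.

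Finally I would remove the scaling factors using the dimension identity $\sum_d d^2\,(\text{multiplicity of }d)=\dim=|G_i|=k_i=|H_i|$, valid on both sides: if two such multisets become equal after scaling their multiplicities by positive integers $m$ and $m'$, then applying this identity forces $m\,k_i=m'\,k_i$, hence $m=m'$ and $\mathrm{cd}(\mathbb{C}^{\alpha_i}G_i)=\mathrm{cd}(\mathbb{C}^{\phi_i(\alpha_i)}H_i)$. Thus $\mathbb{C}^{\alpha_i}G_i\cong\mathbb{C}^{\phi_i(\alpha_i)}H_i$ for all $[\alpha_i]$, i.e.\ $G_i\sim_{\mathbb{C}}H_i$. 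I expect the main obstacle to be precisely reading the twisted-algebra isomorphisms off component-wise: a tensor factor cannot be cancelled in general, so instead of cancelling I pass to degree multisets and exploit the coprimality of the $\pi_i$-parts together with the common-dimension normalization; the companion subtlety—that an abstract isomorphism of Schur multipliers respects the coprime decomposition—is resolved cleanly by primary components.
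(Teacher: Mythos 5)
Your proof is correct and takes essentially the same route as the paper: the coprimality collapses the K\"unneth decomposition of $\Ho^2$, the forward direction is the product-of-isomorphisms plus tensor-factorization argument, and the reverse direction restricts the given isomorphism to the primary (Hall) components of the Schur multipliers and then compares degrees of simple components. Your careful step extracting the component degree multisets via $\pi_i$-parts (using that projective degrees divide the group order) and removing the multiplicity scaling by the total-dimension count is exactly the detail the paper compresses into the phrase ``by dimension consideration.''
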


The next result enables us to  construct  many examples of direct product of groups which are related w.r.t the relation $\sim_{\mathbb C}$.

\begin{corollary}\label{coro1}
	Suppose  $G_i, H_i$ are two groups  satisfying the hypotheses of Corollary \ref{result1} for $i=1,2$. 
	%If $G_i/G_i' \cong H_i/H_i'$, 
	Then the following hold: 
	\begin{enumerate}[(i)]
		\item $G_1\times  G_2\sim_\mathbb C H_1\times H_2.$
		
		\item For every finite abelian group $A$, $G_1\times  A\sim_\mathbb C H_1\times A.$
	\end{enumerate}
\end{corollary}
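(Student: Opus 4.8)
The plan is to deduce both parts directly from Corollary \ref{result1}, by manufacturing, from the data attached to each pair $(G_i,H_i)$, the analogous data for the direct products. For $i=1,2$ write the central subgroups furnished by the hypotheses as $A_i\leq Z(G_i)$ and $B_i\leq Z(H_i)$, together with the isomorphisms $\iota_i\colon B_i\to A_i$ and $\psi_i\colon H_i/B_i\to G_i/A_i$ making Diagram 1 commute. For part (i) I would take the central subgroups $A_1\times A_2\leq Z(G_1\times G_2)$ and $B_1\times B_2\leq Z(H_1\times H_2)$, together with the candidate isomorphisms $\iota_1\times\iota_2$ and $\psi_1\times\psi_2$, using the canonical identifications $(G_1\times G_2)/(A_1\times A_2)\cong (G_1/A_1)\times(G_2/A_2)$ and similarly for $H$. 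Part (ii) will then be immediate from (i): taking the second pair to be $(A,A)$ with trivial central subgroups makes the defining sequence reduce to $1\to 1\to \Ho^2(A,\mathbb C^\times)\xrightarrow{\mathrm{id}}\Ho^2(A,\mathbb C^\times)\to 1$, which is exact, and makes Diagram 1 trivially commute, so $(A,A)$ satisfies the hypotheses of Corollary \ref{result1} and (i) applies.

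The first point to record is that injectivity of $\mathrm{tra}_i$ forces $A_i\leq G_i'$ and $B_i\leq H_i'$. Indeed, in the five-term inflation--restriction sequence attached to the central extension, $\ker(\mathrm{tra}_i)$ equals the image of the restriction $\mathrm{Hom}(G_i,\mathbb C^\times)\to\mathrm{Hom}(A_i,\mathbb C^\times)$, which vanishes exactly when every character of $G_i$ is trivial on $A_i$, i.e.\ when $A_i\leq G_i'$. Consequently the projection $G_i\to G_i/A_i$ induces an isomorphism $G_i^{\mathrm{ab}}\xrightarrow{\sim}(G_i/A_i)^{\mathrm{ab}}$, and likewise for $H_i$. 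This identification is precisely what will make the cross terms in the cohomology of the products behave.

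The main step, and the principal obstacle, is to verify that the defining exact sequence passes to the products. Here I would invoke the K\"unneth description of the Schur multiplier of a direct product, namely the natural decomposition $\Ho^2(X\times Y,\mathbb C^\times)\cong \Ho^2(X,\mathbb C^\times)\oplus\Ho^2(Y,\mathbb C^\times)\oplus \mathrm{Hom}(X^{\mathrm{ab}}\otimes Y^{\mathrm{ab}},\mathbb C^\times)$, functorial in the pair $(X,Y)$. Applying this to $(G_1,G_2)$ and to $(G_1/A_1,G_2/A_2)$, and using the isomorphisms $(G_i/A_i)^{\mathrm{ab}}\cong G_i^{\mathrm{ab}}$ from the previous paragraph, the inflation map for $G_1\times G_2$ becomes block-diagonal: it is $\mathrm{inf}_1\oplus\mathrm{inf}_2$ on the first two summands and the identity on the common cross term $\mathrm{Hom}(G_1^{\mathrm{ab}}\otimes G_2^{\mathrm{ab}},\mathbb C^\times)$. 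Surjectivity of each $\mathrm{inf}_i$ then yields surjectivity of the product inflation, while its kernel is exactly $\ker(\mathrm{inf}_1)\oplus\ker(\mathrm{inf}_2)$, with no contribution from the cross term. On the other hand the product transgression is injective, since $A_1\times A_2\leq G_1'\times G_2'=(G_1\times G_2)'$, and exactness of the five-term sequence at the middle term identifies its image with the kernel just computed; the order comparison $|\mathrm{Hom}(A_1\times A_2,\mathbb C^\times)|=|\mathrm{Hom}(A_1,\mathbb C^\times)|\,|\mathrm{Hom}(A_2,\mathbb C^\times)|$ then confirms that the sequence for $G_1\times G_2$ is short exact, and the same argument applies to $H_1\times H_2$. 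The delicate point throughout is the naturality of the K\"unneth splitting, which is what guarantees that inflation and transgression respect the decomposition and that the cross term is carried identically.

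Finally I would check that Diagram 1 commutes for the products. Since $\psi_1\times\psi_2$ and $\iota_1\times\iota_2$ are product maps, the induced maps on $\Ho^2$ and on $\mathrm{Hom}$ respect the same K\"unneth decompositions, acting as $\overline{\psi_1}\oplus\overline{\psi_2}$ together with an isomorphism on the cross term, and as $\overline{\iota_1}\oplus\overline{\iota_2}$ respectively. As the product transgression lands in the first two summands, commutativity of the product square reduces to the two given commutative squares, one for each $i$. With the exact sequences and the commutative diagram in place, Corollary \ref{result1} yields $G_1\times G_2\sim_{\mathbb C}H_1\times H_2$, proving (i); specializing the second pair to $(A,A)$ as above then proves (ii).
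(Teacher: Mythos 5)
Your proposal is correct and follows essentially the same route as the paper: it forms the product central subgroups and product isomorphisms, notes that injectivity of the transgression forces the central subgroups into the derived subgroups, verifies the hypotheses of Corollary \ref{result1} for $G_1\times G_2$ and $H_1\times H_2$, and applies that corollary, with part (ii) obtained as the special case $G_2=H_2=A$ with trivial central subgroup. The only difference is one of detail: the paper simply asserts that the product rows are exact and that the product diagram commutes, whereas you justify these claims via the naturality of the K\"unneth-type decomposition of Theorem \ref{kar} --- a worthwhile elaboration, but not a different argument.
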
 

In Section \ref{section2}, we study the TGRIP for the central product  of groups.
A group $G$ is a central product of its normal subgroups $G_1$ and $G_2$ amalgamating $A$ if $G=G_1G_2$ with $A=G_1 \cap G_2$ and $[G_1 ,G_2 ]=1$.

\begin{thm}\label{central product}Let $G$ be a central product of its normal subgroups $G_{1}$ and $G_{2}$, and $H$ be a central product of its normal subgroups $H_{1}$ and $H_{2}$. Denote $G_{1}'\cap G_{2}'$ by $Z_{1}$ and $H_{1}'\cap H_{2}'$ by $Z_{2}$. 
Suppose   $G /Z_{1} \sim_\mathbb{C} H/Z_{2}$ via the map $\bar{\phi}$, and
there is an isomorphism  $\bar{i}:   \mathrm{Hom}(Z_{1}, \mathbb C^\times)\to   \mathrm{Hom}(Z_{2}, \mathbb C^\times)$  such that Diagram $1$ is commutative.
Then
 $G \sim_\mathbb{C} H.$
  \end{thm}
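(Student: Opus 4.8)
The plan is to obtain the statement as an instance of Theorem \ref{mainthm}, applied to the two groups $G,H$ equipped with the central subgroups $Z_1\le G$ and $Z_2\le H$. All the \emph{comparison} hypotheses of Theorem \ref{mainthm} are handed to us: the isomorphism $\bar i\colon \mathrm{Hom}(Z_1,\mathbb{C}^\times)\to\mathrm{Hom}(Z_2,\mathbb{C}^\times)$, the relation $G/Z_1\sim_\mathbb{C} H/Z_2$ via $\bar\phi$, and the commutativity of Diagram~1. Consequently the only thing left to check is the \emph{intrinsic} hypothesis of Theorem \ref{mainthm}, namely that for a central product $G=G_1G_2$ with $Z_1=G_1'\cap G_2'$ the transgression--inflation sequence
\[
1 \longrightarrow \mathrm{Hom}(Z_1,\mathbb{C}^\times)\xrightarrow{\ \mathrm{tra}\ }\Ho^2(G/Z_1,\mathbb{C}^\times)\xrightarrow{\ \mathrm{inf}\ }\Ho^2(G,\mathbb{C}^\times)\longrightarrow 1
\]
is exact, and likewise for $H$. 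I would therefore isolate this exactness as the key lemma and prove it for an arbitrary central product.

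First I would record the two elementary facts that make $1\to Z_1\to G\to G/Z_1\to 1$ a central extension with $Z_1\subseteq G'$, so that the maps $\mathrm{tra}$ and $\mathrm{inf}$ are even defined. Since $[G_1,G_2]=1$, the factor $G_2$ centralises $G_1\supseteq G_1'$ and $G_1$ centralises $G_2\supseteq G_2'$; as $Z_1=G_1'\cap G_2'$ lies in both $G_1$ and $G_2$, it is centralised by $G=G_1G_2$, whence $Z_1\le Z(G)$. The vanishing of cross-commutators gives $G'=G_1'G_2'$, so $Z_1=G_1'\cap G_2'\subseteq G_1'G_2'=G'$. With $Z_1\subseteq G'$ in hand, the five-term inflation--restriction exact sequence
\[
1\to\mathrm{Hom}(G/Z_1,\mathbb{C}^\times)\xrightarrow{\mathrm{inf}}\mathrm{Hom}(G,\mathbb{C}^\times)\xrightarrow{\mathrm{res}}\mathrm{Hom}(Z_1,\mathbb{C}^\times)\xrightarrow{\mathrm{tra}}\Ho^2(G/Z_1,\mathbb{C}^\times)\xrightarrow{\mathrm{inf}}\Ho^2(G,\mathbb{C}^\times)
\]
immediately yields injectivity of $\mathrm{tra}$ (every linear character of $G$ factors through $G^{ab}=G/G'$ and is hence trivial on $G'\supseteq Z_1$, so $\mathrm{res}$ has trivial image) together with the middle exactness $\ker(\mathrm{inf})=\operatorname{im}(\mathrm{tra})$.

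The substantive point, and the main obstacle, is the surjectivity of $\mathrm{inf}\colon\Ho^2(G/Z_1,\mathbb{C}^\times)\to\Ho^2(G,\mathbb{C}^\times)$. Because $\mathbb{C}^\times$ is divisible, $\Ho^2(-,\mathbb{C}^\times)\cong\mathrm{Hom}(\Ho_2(-,\mathbb{Z}),\mathbb{C}^\times)$ and $\mathrm{Hom}(-,\mathbb{C}^\times)$ is exact on finite abelian groups, so surjectivity of $\mathrm{inf}$ is equivalent to injectivity of the induced map $\pi_*\colon\Ho_2(G,\mathbb{Z})\to\Ho_2(G/Z_1,\mathbb{Z})$ on Schur multipliers. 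By the relevant portion of Ganea's exact sequence for the central extension,
\[
(G/Z_1)^{ab}\otimes Z_1\xrightarrow{\ \theta\ }\Ho_2(G,\mathbb{Z})\xrightarrow{\ \pi_*\ }\Ho_2(G/Z_1,\mathbb{Z})\longrightarrow Z_1\longrightarrow 0,
\]
where the surjection onto $Z_1$ again uses $Z_1\subseteq G'$, injectivity of $\pi_*$ is equivalent to the vanishing of the image of Ganea's map $\theta$. This is exactly where the choice $Z_1=G_1'\cap G_2'$ is essential: I would show that each class $\theta(\bar x\otimes z)$, represented by the commutator pairing of a lift of $z$ against $x$, is forced to vanish because every $z\in Z_1$ is simultaneously a product of commutators of $G_1$ and of commutators of $G_2$, so the pairing is already absorbed by the commutator structure of the two factors. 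Equivalently, one checks that the terms $E_\infty^{1,1}$ and $E_\infty^{0,2}$ of the Lyndon--Hochschild--Serre spectral sequence of the extension vanish; I expect this commutator computation to be the technical heart of the proof. The order bookkeeping can be cross-checked against known Schur multiplier formulas: for the central product of two extra-special groups of order $p^3$ one finds $|\Ho^2(G/Z_1,\mathbb{C}^\times)|=p^6=p\cdot p^5=|\mathrm{Hom}(Z_1,\mathbb{C}^\times)|\cdot|\Ho^2(G,\mathbb{C}^\times)|$, consistent with exactness.

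Having established the exact sequence for both $G$ (with $Z_1$) and $H$ (with $Z_2$), the full list of hypotheses of Theorem \ref{mainthm} is met by the given data $\bar i$, $\bar\phi$ and Diagram~1, and we conclude $G\sim_\mathbb{C} H$.
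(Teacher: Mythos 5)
Your proposal follows the same skeleton as the paper's proof: everything is reduced to Theorem \ref{mainthm} once one knows that, for a central product $G=G_1G_2$ with $Z_1=G_1'\cap G_2'$, the sequence
\[
1 \to \mathrm{Hom}(Z_1,\mathbb{C}^\times)\xrightarrow{\mathrm{tra}}\Ho^2(G/Z_1,\mathbb{C}^\times)\xrightarrow{\mathrm{inf}}\Ho^2(G,\mathbb{C}^\times)\to 1
\]
is exact. In fact the paper's entire proof is this reduction: it quotes the exactness as Theorem A of \cite{centralproduct} and then invokes Theorem \ref{mainthm}. Your preparatory steps towards reproving that exactness are all correct: $Z_1\le Z(G)$ and $Z_1\subseteq G'$ are elementary; the five-term sequence then gives injectivity of $\mathrm{tra}$ and $\ker(\mathrm{inf})=\mathrm{im}(\mathrm{tra})$; and since $\mathbb{C}^\times$ is divisible, surjectivity of $\mathrm{inf}$ is indeed equivalent to injectivity of $\pi_*\colon \Ho_2(G,\mathbb{Z})\to\Ho_2(G/Z_1,\mathbb{Z})$, hence, by Ganea's sequence (truncated using $Z_1\subseteq G'$), to the vanishing of $\mathrm{im}(\theta)$.

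The gap sits exactly where you place the ``technical heart'': the vanishing of $\mathrm{im}(\theta)$ is asserted, not proved. The sentence ``the pairing is already absorbed by the commutator structure of the two factors'' is not an argument, and this vanishing is precisely the content of the theorem the paper cites, so beyond the (correct) formal reductions nothing substantive has been established. The claim is true, and your plan can be completed, for instance via the nonabelian exterior square: $\Ho_2(G,\mathbb{Z})=\ker(\kappa\colon G\wedge G\to G')$, the Ganea image is generated by the elements $z\wedge g$ with $z\in Z_1$, $g\in G$, and for $g=g_1g_2$ centrality of $z$ together with $[G_1,G_2]=1$ gives $z\wedge g=(z\wedge g_1)(z\wedge g_2)$. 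Writing $z=\prod_i[a_i,b_i]$ with $a_i,b_i\in G_2$ (possible because $z\in G_2'$) and using the exterior-square identity $[a,b]\wedge c=(a\wedge b)\cdot {}^{c}(a\wedge b)^{-1}$, every factor $[a_i,b_i]\wedge g_1$ vanishes because $g_1$ centralises $G_2$; symmetrically $z\wedge g_2=1$ using $z\in G_1'$ --- so both memberships defining $Z_1$ are needed, as you anticipated. As written, however, the proposal proves strictly less than it claims; and if one is permitted simply to cite the published exact sequence of \cite{centralproduct}, the whole Ganea detour is unnecessary and your proof collapses to the paper's two-line argument.
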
 
 As an application, we provide examples in Theorem \ref{example2} and
 we also answer the TGRIP for extra-special $p$-groups, which is given in the following result.
\begin{thm}\label{extra_special}
	Let $G$ and $H$ be two extra-special $p$-groups of order $p^{2n+1}, n\geq 1$. Then the following hold:
	\begin{enumerate}[(i)]
		\item If $n=1$, then the equivalence class is a singleton set w.r.t `$\sim_{\mathbb{C}}$'.
		\item If $n>1$, then $G \sim_{\mathbb{C}} H$.
	\end{enumerate}
\end{thm}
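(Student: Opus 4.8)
For (ii), set $Z_1 = Z(G)$, $Z_2 = Z(H)$, so $Z_j \cong C_p$ and $G/Z_1 \cong H/Z_2 \cong C_p^{2n}$.

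\medskip

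The plan is to treat the two parts by different mechanisms: in part (i) the Schur multiplier alone obstructs equivalence, while in part (ii) I would invoke the transgression criterion of Corollary \ref{result1} (equivalently Theorem \ref{central product}, applied to a decomposition of each group as a central product of smaller extra-special factors). The common input is the standard structure of an extra-special $p$-group: $Z(G)=G'=\Phi(G)\cong C_p$ and $G/Z(G)\cong C_p^{2n}$, together with the commutator pairing $\beta_G\colon G/Z(G)\times G/Z(G)\to Z(G)$, which is a non-degenerate alternating $\mathbb F_p$-bilinear (symplectic) form.

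For (i) I would first record that, among the two extra-special groups of order $p^3$, the one of exponent $p$ has $\Ho^2(G,\mathbb C^\times)\cong C_p\times C_p$, whereas the one of exponent $p^2$ has trivial $\Ho^2(G,\mathbb C^\times)$. Since $G\sim_{\mathbb C}H$ forces an isomorphism $\Ho^2(G,\mathbb C^\times)\cong\Ho^2(H,\mathbb C^\times)$, these two groups cannot be equivalent. To upgrade this to the singleton statement among all groups of order $p^3$, I would note that any group $\sim_{\mathbb C}$-equivalent to a given one must share its Schur multiplier, and then run a short case check over the five groups of order $p^3$: the exponent-$p^2$ extra-special group is the only non-abelian group with trivial multiplier (and is separated from $C_{p^3}$ by the ordinary character degrees read off from the trivial-cocycle block $\mathbb CG$), while the exponent-$p$ group is the only group of order $p^3$ whose multiplier has order exactly $p^2$.

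For (ii), with $Z_j=Z(G_j)$ as above, I would first check that the two transgression sequences are exact. Injectivity of $\tra_j$ follows from the five-term exact sequence: since $G_j'=Z_j$, the map $\mathrm{inf}\colon\Ho^1(G_j/Z_j,\mathbb C^\times)\to\Ho^1(G_j,\mathbb C^\times)$ is an isomorphism, so the restriction to $\Ho^1(Z_j,\mathbb C^\times)$ vanishes and $\tra_j$ is injective. Surjectivity of $\mathrm{inf}_j$ follows by comparing orders, using the standard value $|\Ho^2(G_j,\mathbb C^\times)|=p^{2n^2-n-1}$ for extra-special groups with $n\ge 2$ (the same for both types) against $|\Ho^2(C_p^{2n},\mathbb C^\times)|=p^{\,n(2n-1)}$ and $|\mathrm{Hom}(Z_j,\mathbb C^\times)|=p$; this is exactly where the hypothesis $n\ge 2$ enters, and its failure for the Heisenberg group explains why (i) differs.

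The heart of the argument is the choice of $i$ and $\phi$ making Diagram $1$ commute. Under the identification of $\Ho^2(C_p^{2n},\mathbb C^\times)$ with alternating $\mathbb F_p$-bilinear forms, $\tra_j(\chi)$ is the class of $\chi\circ\beta_{G_j}$. Because $\beta_G$ and $\beta_H$ are both non-degenerate symplectic forms of rank $2n$, the classification of alternating forms over $\mathbb F_p$ yields isomorphisms $i\colon Z_2\to Z_1$ and an $\mathbb F_p$-linear (hence group) isomorphism $\phi\colon H/Z_2\to G/Z_1$ with $i(\beta_H(\bar x,\bar y))=\beta_G(\phi\bar x,\phi\bar y)$. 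The induced pullback $\bar\phi$ preserves the rank of the associated form, hence simultaneously witnesses $G/Z_1\sim_{\mathbb C}H/Z_2$ (two copies of $C_p^{2n}$, whose twisted group algebras depend only on this rank) and satisfies $\bar\phi\circ\tra_1=\tra_2\circ\bar i$. Corollary \ref{result1} then gives $G\sim_{\mathbb C}H$. I expect the main obstacle to be precisely this middle step: making the transgression explicit as $\chi\circ\beta_{G_j}$ and recognizing that, although the $p$-power map distinguishes the two extra-special groups as groups, their commutator forms are \emph{equivalent} symplectic forms, which is what lets the single map $\bar\phi$ both certify the equivalence of the abelian quotients and close Diagram $1$.
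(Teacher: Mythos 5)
Your proposal is correct, but it reaches the conclusion by a genuinely different route than the paper. For part (ii), the paper decomposes $G$ and $H$ as central products of $n$ extra-special groups of order $p^3$ and invokes its Theorem \ref{central product}, whose exactness hypothesis is supplied by a cited theorem on central products; it then computes the two transgression maps completely explicitly, via concrete sections on the standard generators, and matches them by a generator-to-generator isomorphism. You instead verify exactness directly (injectivity of $\mathrm{tra}$ from the five-term sequence because $Z_j=G_j'$, surjectivity of $\inf$ by comparing $|\Ho^2(C_p^{2n},\mathbb C^\times)|=p^{2n^2-n}$ with the known multiplier order $p^{2n^2-n-1}$ for $n\geq 2$), and you replace the explicit cocycle computation by the conceptual identification of $\mathrm{tra}_j(\chi)$ with the alternating form $\chi\circ\beta_{G_j}$, closing Diagram 1 via the uniqueness of non-degenerate symplectic forms over $\mathbb F_p$; both facts are standard and your use of them is sound (non-degeneracy of $\beta_{G_j}$ holds exactly because $Z_j=Z(G_j)$, and any ambiguity of sign in the antisymmetrization is harmless since $\beta^{-1}$ is again symplectic). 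What your route buys is brevity, independence from the central product machinery, and a transparent explanation of why $n\geq 2$ is essential (it is exactly where the order count for surjectivity of $\inf$ succeeds, and fails for the Heisenberg group); what the paper's route buys is self-containedness — its section computations require no external facts about $\Ho^2$ of elementary abelian groups or symplectic classification — and reuse of machinery (Theorem \ref{central product}) that it also needs for other examples. For part (i) both arguments rest on the non-isomorphic Schur multipliers; your additional case check over all five groups of order $p^3$ makes the ``singleton'' claim more explicit, where the paper implicitly relies on the fact that abelian groups form singleton classes.
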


The groups of order $p^5$ and $p^6$ for odd prime $p$, had been classified in \cite{James}, and some errors of this paper have been identified in \cite{correction}.
There are $10$ isoclinism classes for groups of order $p^5$, where $p$ is an odd prime, denoted by $\Phi_i, 1\leq i \leq 10$.
 The isoclinism class $\Phi_1$ consists of abelian groups.
  We suggest readers to keep the article \cite{James} and \cite{correction} handy as we use the classification and notations of the groups given in \cite{James} throughout this paper, without further reference. Note that when we write a presentation of a group, we omit the relations of the form $[x,y]=1$, for the generators $x,y \in G$.
  In Section \ref{groupsp5}, we answer the TGRIP for the groups of order $p^5, p\geq 5$, except the groups given in the list $S=\{\Phi_3(221)b_r,\Phi_4(221)b,   \Phi_4(221)d_{\frac{1}{2}(p-1)},  \Phi_4(221)f_0,
 \Phi_{6}(1^5)\}$.

 \section{Preliminaries}
  In this section, we recall some results used to prove our main results.
For  $\alpha \in Z^2(G,  \mathbb{C}^\times)$,  the projective representations of $G$ corresponding to $\alpha$ are called  $\alpha$-representations of $G$. See \cite[page  71]{Karpilovsky}  for further details.
For a finite group $G$, by Wedderburn decomposition [\cite{MR1183469}, Theorem 2.1.2], we have  
 $$\mathbb{C}^\alpha G \cong \prod_{i=1}^t \mathbb{C}^{n_i \times n_i},$$ 
 where $t$ is the number of inequivalent irreducible $\alpha$-representations of $G$, and $n_{i}$ is the dimension of the $i$-th irreducible  $\alpha$-representation of $G$, and $\mathbb{C}^{n_i \times n_i}$ denotes $n_i \times n_i$ matrices with entries in $\mathbb C$. 
  For a finite group $G$, the set of all inequivalent irreducible complex $\alpha$-representations of $G$ will be denoted by $\irr^\alpha(G)$, and $\irr(G)$ denotes the set of all inequivalent irreducible ordinary representations of $G$.
A group $G$ is capable if there is a group $H$ such that $G \cong H/Z(H)$.  The epicenter of $G$ is denoted by $Z^*(G)$ which is the smallest central subgroup
of $G$ such that $G/Z^*(G)$ is capable.

\begin{thm}[Theorem 2.5.10, \cite{GK}]\label{capable}
If $G$ is a finite group with epicenter $Z^*(G)$, then for any subgroup $Z \subseteq Z^*(G)\cap G'$ of $G$,  the following  sequence 
\[
 1  \xrightarrow{}   \mathrm{Hom}(Z, \mathbb C^\times)  \xrightarrow{\mathrm{tra}}  \Ho^2(G/Z, \mathbb C^\times)   \xrightarrow{\mathrm{inf}}  \Ho^2(G, \mathbb C^\times)  \xrightarrow{}  1 
\]
is exact.
\end{thm}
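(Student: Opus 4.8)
The plan is to derive the entire statement from the five-term (inflation--restriction) exact sequence attached to the central extension $1 \to Z \to G \to G/Z \to 1$ with coefficients in the trivial module $\bC^\times$, and to isolate the single nontrivial input --- surjectivity of $\mathrm{inf}$ --- as the point where the epicenter hypothesis is used. The relevant tail of that standard sequence reads
\[
\Ho^1(G, \bC^\times) \xrightarrow{\res} \mathrm{Hom}(Z, \bC^\times) \xrightarrow{\tra} \Ho^2(G/Z, \bC^\times) \xrightarrow{\mathrm{inf}} \Ho^2(G, \bC^\times),
\]
where I have used that $Z$ is central and $\bC^\times$ is a trivial module, so that the $G/Z$-invariants of $\Ho^1(Z,\bC^\times)$ are all of $\mathrm{Hom}(Z,\bC^\times)$. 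Exactness of this sequence gives at once $\ker(\mathrm{inf}) = \mathrm{im}(\tra)$, which is exactness at $\Ho^2(G/Z,\bC^\times)$; so that portion of the claim is free once the sequence is in place.

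For injectivity of $\tra$ (exactness at $\mathrm{Hom}(Z,\bC^\times)$) I would combine $\ker(\tra) = \mathrm{im}(\res)$ with the hypothesis $Z \subseteq G'$. Indeed $\Ho^1(G,\bC^\times) = \mathrm{Hom}(G,\bC^\times)$ consists of characters trivial on $G'$; since $Z \subseteq G'$, each such character restricts trivially to $Z$, whence $\res = 0$ and $\tra$ is injective. This is the only place the condition $Z \subseteq G'$ is needed.

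The substantive point is surjectivity of $\mathrm{inf}$, and here the epicenter hypothesis does the work. First I would dualize: as $\bC^\times$ is divisible, hence an injective $\bZ$-module, the universal coefficient theorem gives a natural isomorphism $\Ho^2(-,\bC^\times) \cong \mathrm{Hom}(\Ho_2(-,\bZ),\bC^\times) = \mathrm{Hom}(M(-),\bC^\times)$, under which $\mathrm{inf}$ becomes $\mathrm{Hom}(\pi_*,\bC^\times)$ for the map $\pi_* : M(G) \to M(G/Z)$ induced on Schur multipliers by the projection $\pi$. Since $M(G)$ and $M(G/Z)$ are finite and $\mathrm{Hom}(-,\bC^\times)$ is exact on finite abelian groups, surjectivity of $\mathrm{inf}$ is equivalent to injectivity of $\pi_*$. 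Next, from the homology five-term (Ganea) exact sequence for this central extension,
\[
Z \otimes (G/Z)^{\mathrm{ab}} \xrightarrow{\ \mu\ } M(G) \xrightarrow{\ \pi_*\ } M(G/Z),
\]
one has $\ker(\pi_*) = \mathrm{im}(\mu)$, and the Ganea map $\mu$ sends $z \otimes \overline{g}$ to the class $z \wedge g$ in $M(G) = \ker\big(G \wedge G \to G'\big)$. Hence $\pi_*$ is injective exactly when $z \wedge g = 1$ for all $z \in Z$ and $g \in G$, that is, exactly when $Z$ lies in the exterior center $Z^{\wedge}(G)$. Invoking the identification $Z^{\wedge}(G) = Z^*(G)$ of the exterior center with the epicenter, the hypothesis $Z \subseteq Z^*(G)$ forces $\mu = 0$, so $\pi_*$ is injective and $\mathrm{inf}$ is surjective.

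I expect the main obstacle to be precisely this last identification: proving that $\ker(\pi_*)$ is the image of the Ganea map and that its vanishing is controlled by the epicenter --- equivalently, that an element of $Z$ lifts to a central element in every central extension of $G$ if and only if it belongs to $Z^*(G)$. Everything else is a formal consequence of the two standard exact sequences together with the self-duality of finite abelian groups under $\mathrm{Hom}(-,\bC^\times)$. One should also verify naturality in the universal coefficient step, so that $\mathrm{inf}$ genuinely corresponds to $\mathrm{Hom}(\pi_*,\bC^\times)$, but this is routine.
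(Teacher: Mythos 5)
The paper contains no proof of this statement: it is imported verbatim as Theorem 2.5.10 of \cite{GK}, so there is no internal argument to compare yours against; your proposal has to be judged on its own and against the classical treatment behind that citation. Your overall structure is correct and isolates the hypotheses in the right places: exactness at $\Hom(Z,\bC^\times)$ and at $\Ho^2(G/Z,\bC^\times)$ follows from the inflation--restriction five-term sequence plus $Z\subseteq G'$ (which forces $\res=0$), granted the routine identification of the spectral-sequence transgression with the cocycle-level map $\chi\mapsto[\alpha_\chi]$ used in the statement; and you correctly reduce surjectivity of $\inf$, via the natural universal-coefficient duality and finiteness of the multipliers, to injectivity of $\pi_*:\Ho_2(G,\bZ)\to \Ho_2(G/Z,\bZ)$.

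Two caveats on the surjectivity half. First, your Ganea sequence is misstated: the Ganea term is $Z\otimes G^{\mathrm{ab}}$, not $Z\otimes (G/Z)^{\mathrm{ab}}$. With your domain the map $z\otimes\bar g\mapsto z\wedge g$ is not even well defined, and exactness fails: take $G=Z=C_p\times C_p$, where $Z\otimes(G/Z)^{\mathrm{ab}}=0$ but $\ker(\pi_*)=\Ho_2(G,\bZ)\cong C_p$. The slip is harmless for your argument, because all you actually use is that $\ker(\pi_*)$ is the subgroup generated by $\{z\wedge g : z\in Z,\ g\in G\}$, which is correct. Second, the deep input sits exactly where you flagged it: you need the inclusion $Z^*(G)\subseteq Z^{\wedge}(G)$, equivalently that $G$ modulo its exterior center is capable, which is Ellis's theorem and genuinely nontrivial. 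The classical route (Beyl--Felgner--Schmid, and the one underlying \cite{GK}) reaches the same conclusion without exterior squares: writing $G=F/R$, $Z=S/R$ for a free presentation, Hopf's formula gives $\ker(\pi_*)=[F,S]/[F,R]$, and the characterization of $Z^*(G)$ as the image of $Z(F/[F,R])$ in $G$ turns the hypothesis $Z\subseteq Z^*(G)$ directly into $[F,S]\subseteq[F,R]$. So your proof is correct modulo the substantial results you cite, but it takes a heavier path; the free-presentation computation is self-contained and is, in effect, the same calculation phrased without the nonabelian exterior square.
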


\begin{thm}[Theorem 2.2.10, \cite{GK}]\label{kar} 
For two groups $G_1$ and $G_2$, 
$$\Ho^2(G_1\times G_2, \mathbb C^\times)\cong \Ho^2(G_1, \mathbb C^\times) \times \Ho^2(G_2, \mathbb C^\times)\times \Hom(G_1/G_1' \otimes_\mathbb Z G_2/G_2', \mathbb C^\times).$$
\end{thm}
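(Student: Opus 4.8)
The plan is to pass from the Schur multiplier to integral group homology, where the Künneth formula does the bulk of the work, and then dualize. First I would use the Universal Coefficients Theorem: for a trivial coefficient module $A$ there is a short exact sequence
$$0 \to \operatorname{Ext}_{\mathbb{Z}}^1(\Ho_{n-1}(G,\mathbb{Z}), A) \to \Ho^n(G, A) \to \operatorname{Hom}(\Ho_n(G,\mathbb{Z}), A) \to 0.$$
Since $\mathbb{C}^\times$ is divisible, it is injective as a $\mathbb{Z}$-module, so the $\operatorname{Ext}$ term vanishes and one obtains a natural isomorphism $\Ho^2(G, \mathbb{C}^\times) \cong \operatorname{Hom}(\Ho_2(G,\mathbb{Z}), \mathbb{C}^\times)$, valid for every finite group $G$. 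This reduces the claim to a purely homological statement about $\Ho_2(G_1 \times G_2, \mathbb{Z})$.

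Next I would apply the Künneth formula for the homology of a direct product of groups (equivalently, for the product of their classifying spaces $K(G_i,1)$). In degree two it reads
$$\Ho_2(G_1 \times G_2, \mathbb{Z}) \cong \bigoplus_{i+j=2} \Ho_i(G_1,\mathbb{Z}) \otimes_{\mathbb{Z}} \Ho_j(G_2,\mathbb{Z}) \;\oplus\; \bigoplus_{i+j=1} \operatorname{Tor}_1^{\mathbb{Z}}\!\big(\Ho_i(G_1,\mathbb{Z}), \Ho_j(G_2,\mathbb{Z})\big).$$
Using $\Ho_0(G,\mathbb{Z}) \cong \mathbb{Z}$ and $\Ho_1(G,\mathbb{Z}) \cong G/G'$, every $\operatorname{Tor}_1^{\mathbb{Z}}$ summand carries a free factor $\mathbb{Z}$ and hence vanishes, so the formula collapses to
$$\Ho_2(G_1 \times G_2, \mathbb{Z}) \cong \Ho_2(G_1,\mathbb{Z}) \oplus \Ho_2(G_2,\mathbb{Z}) \oplus \big(G_1/G_1' \otimes_{\mathbb{Z}} G_2/G_2'\big).$$

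Finally I would dualize: applying $\operatorname{Hom}(-,\mathbb{C}^\times)$ sends this finite direct sum to a finite direct product, and invoking the isomorphism of the first step for each of $G_1$ and $G_2$ yields exactly the asserted decomposition. The main obstacle I anticipate is not any single computation but the machinery overhead: one must either cite the topological Künneth theorem for Eilenberg--MacLane spaces or reprove its algebraic analogue, and then take care that the identifications (abelianization in degree one, vanishing of the Tor terms, and the splitting in the Universal Coefficients Theorem) are natural enough that the resulting isomorphism is canonical rather than merely abstract.

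A more self-contained alternative avoids homology altogether and stays within the twisted-group-ring language of the paper: one writes the three structure maps explicitly at the cocycle level -- namely the two inflations along the projections $G_1 \times G_2 \to G_i$, together with the commutator pairing $(g_1, g_2) \mapsto \alpha(g_1,g_2)\alpha(g_2,g_1)^{-1}$, which records the scalar by which the canonical lifts of $g_1 \in G_1$ and $g_2 \in G_2$ fail to commute in $\mathbb{C}^\alpha(G_1 \times G_2)$. One then checks that this pairing is biadditive, factors through $G_1/G_1' \otimes_{\mathbb{Z}} G_2/G_2'$, and that the three maps assemble into an isomorphism. In that route the real work shifts to a cocycle-normalization argument establishing injectivity and surjectivity of the combined map.
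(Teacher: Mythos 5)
The paper never proves this statement: it is quoted, with attribution, from [GK, Theorem 2.2.10], so there is no in-paper argument to compare yours against. Your main (homological) argument is a correct, self-contained proof. The universal coefficients step is valid because $\mathbb{C}^\times$ is divisible, hence injective as a $\mathbb{Z}$-module, so the $\operatorname{Ext}^1$ term vanishes and $\Ho^2(G,\mathbb{C}^\times)\cong\operatorname{Hom}(\Ho_2(G,\mathbb{Z}),\mathbb{C}^\times)$ for any group $G$; the K\"unneth computation in degree two is right, the $\operatorname{Tor}$ terms dying because each involves $\Ho_0\cong\mathbb{Z}$; and dualizing the three-term splitting, using that $\operatorname{Hom}(-,\mathbb{C}^\times)$ turns finite direct sums into products, gives exactly the displayed isomorphism. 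Note that since the theorem asserts only an abstract isomorphism of abelian groups, the naturality worry you flag is not an obstacle here (though naturality does hold and would matter if one wanted the decomposition to be compatible with the restriction maps $\res^{G}_{G_i}$ used in the paper's proof of Theorem \ref{direct_product}). Your sketched cocycle-level alternative --- the two inflations along the projections together with the pairing $\alpha(g_1,g_2)\alpha(g_2,g_1)^{-1}$, shown to be bimultiplicative and to factor through $G_1/G_1'\otimes_{\mathbb{Z}}G_2/G_2'$ --- is the route closer to the classical Schur-multiplier literature (and, in all likelihood, to the cited source), but as written it remains a sketch: the injectivity/surjectivity of the assembled map is exactly the part you defer. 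The homological argument is the one that stands complete on its own.
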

The following result follows from \cite[Theorem 3.2]{PS}.
\begin{thm}\label{dimpreserve}
Let $G$ be a finite group and $Z\subseteq G' \cap Z(G)$ be a subgroup of $G$. 
If $[\alpha] \in \mathrm{Im}\big(\inf:\Ho^2(G/Z, \mathbb C^\times) \to \Ho^2(G, \mathbb C^\times)\big)$, then there is a dimension preserving bijective correspondence between the sets $\irr^\alpha(G)$
and $\bigcup_{\{[\beta] \in\Ho^2(G/Z, \mathbb C^\times) \mid \inf([\beta])=[\alpha]\}}\irr^\beta(G/Z)$. In particular, 
	\[
		\mathbb C^\alpha G \cong \prod_{\{[\beta]  \in\Ho^2(G/Z, \mathbb C^\times) \mid \inf([\beta])=[\alpha]\}} \mathbb C^\beta [G/Z].
		\]
\end{thm}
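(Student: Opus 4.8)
The plan is to realize every irreducible $\alpha$-representation of $G$ as an inflation, along a fixed section, of an irreducible projective representation of $G/Z$, and to read off its twisting cocycle. First I would fix a representative of $[\alpha]$ that is genuinely inflated: since $[\alpha]\in\mathrm{Im}(\inf)$, choose a normalized cocycle $\beta_0\in Z^2(G/Z,\mathbb C^\times)$ with $\inf([\beta_0])=[\alpha]$ and set $\alpha(x,y)=\beta_0(xZ,yZ)$. Let $\rho\in\irr^\alpha(G)$. Because $Z\subseteq Z(G)$ and $\alpha$ is inflated from a normalized cocycle, one has $\alpha(z,g)=\alpha(g,z)=1$ for $z\in Z$, $g\in G$, so $\rho(z)$ commutes with every $\rho(g)$; by Schur's lemma $\rho(z)=\lambda(z)I$ for a scalar $\lambda(z)$. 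Since $\alpha|_{Z\times Z}=1$, the assignment $z\mapsto\lambda(z)$ is multiplicative, yielding a well-defined $\lambda\in\Hom(Z,\mathbb C^\times)$, the central character of $\rho$ on $Z$.

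The core step is the construction of the correspondence. Fix a normalized section $s\colon G/Z\to G$ of the quotient map and let $c(\bar x,\bar y)=s(\bar x)s(\bar y)s(\overline{xy})^{-1}\in Z$ be the associated factor set; by the definition of transgression, $\bar x,\bar y\mapsto\lambda\big(c(\bar x,\bar y)\big)$ is a $2$-cocycle representing $\tra(\lambda)$. Given $\rho$ with central character $\lambda$, define $\sigma(\bar g)=\rho(s(\bar g))$. Using $\rho(s(\bar x))\rho(s(\bar y))=\alpha(s(\bar x),s(\bar y))\rho(s(\bar x)s(\bar y))$, the identity $s(\bar x)s(\bar y)=s(\overline{xy})c(\bar x,\bar y)$, and $\rho|_Z=\lambda I$, a direct computation gives
\[
\sigma(\bar x)\sigma(\bar y)=\beta_0(\bar x,\bar y)\,\lambda\big(c(\bar x,\bar y)\big)\,\sigma(\overline{xy}),
\]
so $\sigma$ is a $\beta_\lambda$-representation of $G/Z$ with $[\beta_\lambda]=[\beta_0]\cdot\tra(\lambda)$, of the same dimension as $\rho$. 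Conversely, from a $\beta_\lambda$-representation $\sigma$ I would recover $\rho$ via $\rho(s(\bar g)z)=\lambda(z)\sigma(\bar g)$ and verify it is an $\alpha$-representation; irreducibility is preserved in both directions because $\rho$ and $\sigma$ have identical invariant subspaces. This makes $\rho\leftrightarrow\sigma$ a dimension-preserving bijection between the $\alpha$-representations of $G$ with central character $\lambda$ and $\irr^{\beta_\lambda}(G/Z)$.

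Finally I would check that the index sets match. Each $[\beta_\lambda]$ satisfies $\inf([\beta_\lambda])=\inf([\beta_0])=[\alpha]$, and conversely any $[\beta]$ with $\inf([\beta])=[\alpha]$ has $[\beta][\beta_0]^{-1}\in\ker(\inf)=\mathrm{Im}(\tra)$, hence equals some $[\beta_\lambda]$. Here the hypothesis $Z\subseteq G'$ enters decisively: in the five-term exact sequence of the central extension $1\to Z\to G\to G/Z\to 1$ the kernel of $\tra$ is the image of the restriction $\Hom(G,\mathbb C^\times)\to\Hom(Z,\mathbb C^\times)$, which is trivial since $Z\subseteq G'$, so $\tra$ is injective and $\lambda\mapsto[\beta_\lambda]$ is a bijection from $\Hom(Z,\mathbb C^\times)$ onto $\{[\beta]\mid\inf([\beta])=[\alpha]\}$. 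As irreducible $\alpha$-representations with distinct central characters on $Z$ are inequivalent, the partial bijections assemble into the asserted dimension-preserving bijection between $\irr^\alpha(G)$ and the disjoint union; the Wedderburn decomposition then matches matrix blocks and upgrades this to $\mathbb C^\alpha G\cong\prod_{\{[\beta]\mid\inf([\beta])=[\alpha]\}}\mathbb C^\beta[G/Z]$. I expect the main obstacle to be the cocycle bookkeeping in the displayed identity—confirming that the twist of $\sigma$ is \emph{exactly} $\beta_0\cdot\tra(\lambda)$ rather than merely a cohomologous cocycle—together with the clean identification of the fiber of $\inf$ with $\Hom(Z,\mathbb C^\times)$ via the injectivity of transgression.
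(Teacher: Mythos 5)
Your proof is correct, and it is necessarily a different route from the paper's, because the paper does not prove this statement at all: it simply quotes it from \cite[Theorem 3.2]{PS}. What you have written is the standard direct argument that presumably underlies the cited result: normalize $\alpha$ to be literally inflated from a cocycle $\beta_0$ on $G/Z$, observe that each $\rho \in \irr^{\alpha}(G)$ has a central character $\lambda \in \Hom(Z,\mathbb C^\times)$, transport $\rho$ along a normalized section $s$ to a projective representation of $G/Z$ whose cocycle is exactly $\beta_0\cdot(\lambda\circ c)$, a representative of $[\beta_0]\cdot\tra(\lambda)$, and then match the fibers of $\inf$ with $\Hom(Z,\mathbb C^\times)$ using the five-term exact sequence: $\ker(\inf)=\mathrm{Im}(\tra)$ gives surjectivity of $\lambda \mapsto [\beta_\lambda]$ onto the fiber, while $\ker(\tra)=\mathrm{Im}\big(\res:\Hom(G,\mathbb C^\times)\to\Hom(Z,\mathbb C^\times)\big)$, trivial because $Z\subseteq G'$, gives injectivity --- and injectivity is genuinely needed, since otherwise two central characters would feed the same class $[\beta]$ and the union in the statement would undercount. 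Your cocycle bookkeeping in the displayed identity is right (the factor set $c$ takes values in the central subgroup $Z$, so $s(\bar x)s(\bar y)=s(\overline{xy})c(\bar x,\bar y)$ and $\rho(gz)=\lambda(z)\rho(g)$ hold on the nose), and the two constructions are mutually inverse, not just inverse up to equivalence. Two small points deserve a sentence in a final write-up: replacing $\alpha$ by the inflated representative is harmless because cohomologous cocycles yield isomorphic twisted group algebras and a dimension-preserving bijection of irreducibles, so the statement is invariant under the choice; and the maps $\rho\leftrightarrow\sigma$ visibly intertwine linear equivalences, hence descend to the sets of equivalence classes that $\irr^{\alpha}(G)$ and $\irr^{\beta}(G/Z)$ actually denote. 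The advantage of your version is that it is self-contained where the paper outsources the content; the paper's citation buys brevity and, via \cite[Corollary 3.3]{PS}, a uniform source for the companion result (Theorem \ref{proj-ord}) on representation groups.
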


\begin{definition}[\cite{GK}, page 16]\label{def}
A group $\tilde{G}$ is a representation group (or covering group) of $G$ if there is a central extension 
\[
1 \to Z \to \tilde{G} \to G \to 1
\]
such that $Z \subseteq Z(\tilde{G})\cap (\tilde{G})'$ and $Z \cong \Ho^2(G,\mathbb C^\times)$.
\end{definition}
If $\tilde{G}$ is a representation group of $G$, then it follows from \cite[Theorem 2.1.4]{GK} that the corresponding transgression map $\mathrm{tra}:  \Hom(Z, \mathbb C^\times) \to \Ho^2(G, \mathbb C^\times)$ is an isomorphism.
Hence the following result follows from  \cite[Theorem 3.2, Corollary 3.3]{PS}.

\begin{thm}\label{proj-ord}
Let $G$ be a finite group and $\tilde{G}$ be a representation group of $G$ with a central subgroup $Z$ such that $\tilde{G}/Z \cong G$, as in Definition \ref{def}.
Let $\chi \in \Hom(Z, \mathbb C^\times)$ such that $\tra(\chi)=[\alpha] \in \Ho^2(G, \mathbb C^\times)$. Then there is a bijective correspondence between the sets $\irr^\alpha(G)$ and $\irr(\tilde{G}\mid \chi)$, where $\irr(\tilde{G}\mid \chi)=\{\rho \in \irr(\tilde{G}) \mid \langle \rho|_Z, \chi\rangle\neq 0\}$ denotes the irreducible complex ordinary representations of $\tilde{G}$ lying above $\chi$. 
In particular, the sets 
$\cup_{[\alpha] \in \Ho^2(G, \mathbb C^\times) }\irr^\alpha(G)$ and $\irr(\tilde{G})$ are in bijective correspondence.
\end{thm}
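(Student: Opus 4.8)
The plan is to make the classical Schur correspondence between projective and linear representations completely explicit, using the central extension that defines $\tilde{G}$. First I would fix a normalized set-theoretic section $\mu\colon G\to\tilde{G}$ of the projection $\pi\colon\tilde{G}\to G$, so that $\mu(1)=1$ and every element of $\tilde{G}$ is written uniquely as $z\,\mu(x)$ with $z\in Z$ and $x=\pi(\tilde{g})$. Since $Z\subseteq Z(\tilde{G})\cap\tilde{G}'$, the section cocycle $c(x,y)=\mu(x)\mu(y)\mu(xy)^{-1}$ lands in $Z$, and the given character $\chi$ produces exactly the cocycle $\alpha$ via $\alpha(x,y)=\chi(c(x,y))$, which is the content of $\tra(\chi)=[\alpha]$. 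This identification of $\alpha$ with $\alpha_\chi$ is the bookkeeping that drives both directions of the correspondence.

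Next I would construct the two maps. Given $P\in\irr^\alpha(G)$, define $\tilde{P}$ on $\tilde{G}$ by $\tilde{P}(z\,\mu(x))=\chi(z)\,P(x)$; a direct computation using $\mu(x)\mu(y)=c(x,y)\mu(xy)$ together with $\alpha=\alpha_\chi$ shows $\tilde{P}$ is an ordinary representation, and evaluating on $Z$ gives $\tilde{P}|_Z=\chi\cdot I$, so $\tilde{P}\in\irr(\tilde{G}\mid\chi)$. Conversely, given $\rho\in\irr(\tilde{G}\mid\chi)$, Schur's lemma forces $\rho|_Z=\chi\cdot I$ because $Z$ is central and $\rho$ is irreducible, and then setting $P(x)=\rho(\mu(x))$ yields an $\alpha$-representation of $G$. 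Since $\tilde{P}$ and $P$ act on the same space through the same operators up to the scalars $\chi(z)$, invariant subspaces correspond and irreducibility is preserved in both directions; the two assignments are visibly mutually inverse and send equivalent representations to equivalent ones, giving the claimed bijection $\irr^\alpha(G)\leftrightarrow\irr(\tilde{G}\mid\chi)$. This bijection also preserves dimension, consistently with Theorem \ref{dimpreserve}.

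For the ``in particular'' statement, I would assemble these fiberwise bijections over all characters. Because $\tilde{G}$ is a representation group of $G$, the transgression map $\tra\colon\Hom(Z,\mathbb{C}^\times)\to\Ho^2(G,\mathbb{C}^\times)$ is an isomorphism, so as $\chi$ runs over $\Hom(Z,\mathbb{C}^\times)$ the class $[\alpha]=\tra(\chi)$ runs bijectively over $\Ho^2(G,\mathbb{C}^\times)$. On the other side, every $\rho\in\irr(\tilde{G})$ restricts on the central subgroup $Z$ to a scalar character by Schur's lemma, so $\irr(\tilde{G})$ is the disjoint union of the sets $\irr(\tilde{G}\mid\chi)$. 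Matching $\bigsqcup_{[\alpha]}\irr^\alpha(G)$ with $\bigsqcup_\chi\irr(\tilde{G}\mid\chi)$ under $\chi\mapsto[\alpha]$ produces the global bijection between $\bigcup_{[\alpha]}\irr^\alpha(G)$ and $\irr(\tilde{G})$.

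The main obstacle I expect lies entirely in the first two steps: carefully verifying that the lift $\tilde{P}$ is a genuine homomorphism, which hinges on the precise identification of the section cocycle $c$ with the transgression cocycle $\alpha_\chi$. Once that identity is pinned down, the remaining points---irreducibility, the inverse construction, and the fiberwise assembly---are routine. A secondary point to watch is that the correspondence respects equivalence classes rather than merely individual representations, so that it genuinely descends to a bijection of the sets $\irr^\alpha(G)$ and $\irr(\tilde{G}\mid\chi)$.
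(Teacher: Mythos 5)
Your proposal is correct, but it takes a genuinely different route from the paper: the paper gives no direct proof of this statement at all. It observes that for a representation group the transgression map $\tra\colon \Hom(Z,\mathbb C^\times)\to \Ho^2(G,\mathbb C^\times)$ is an isomorphism (citing Theorem 2.1.4 of \cite{GK}) and then simply invokes Theorem 3.2 and Corollary 3.3 of \cite{PS}, which supply the correspondence between $\irr^\alpha(G)$ and the ordinary irreducible representations of $\tilde{G}$ lying above $\chi$. You instead reconstruct the classical Schur correspondence explicitly: lifting $P\in\irr^\alpha(G)$ to $\tilde{P}(z\,\mu(x))=\chi(z)P(x)$, and conversely restricting $\rho\in\irr(\tilde{G}\mid\chi)$ along the section, with Schur's lemma pinning down the central character. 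This is self-contained, makes the dimension preservation and the decomposition $\irr(\tilde{G})=\bigsqcup_\chi\irr(\tilde{G}\mid\chi)$ transparent, and is essentially the argument underlying the cited results; what the citation buys the paper is brevity and access to the more general setting of \cite{PS} (arbitrary central $Z\subseteq Z(G)\cap G'$, organized by inflation fibers), which the paper also needs elsewhere as Theorem \ref{dimpreserve}.

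One point to tighten: $\tra(\chi)=[\alpha]$ only says that the section cocycle $\alpha_\chi(x,y)=\chi\big(\mu(x)\mu(y)\mu(xy)^{-1}\big)$ is \emph{cohomologous} to $\alpha$, not equal to it. Your lift $\tilde{P}$ is a homomorphism precisely when $P$ is an $\alpha_\chi$-representation, so you should either take $\alpha_\chi$ as the chosen representative of $[\alpha]$ (harmless, since the statement depends only on the class) or compose with the standard bijection $\irr^\alpha(G)\leftrightarrow\irr^{\alpha_\chi}(G)$, $P\mapsto bP$, where $\alpha_\chi=\alpha\,\delta b$ for a coboundary $\delta b$. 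With that caveat made explicit, the remaining steps --- irreducibility via coincidence of invariant subspaces, mutual inversion, compatibility with equivalence, and the assembly over all $\chi$ using that $\tra$ is an isomorphism --- are complete and correct.
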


\begin{lemma}[Lemma 4.2 of \cite{PGS}]\label{bijection_ord}
Let $G_1$ and $G_2$ be two finite groups with $\tilde{G}_1$ and $\tilde{G}_2$ as their representation groups, respectively. For $i \in \{1, 2\}$, let $Z_i$ be a central subgroup of $\tilde{G}_i$ such that $\tilde{G}_i /Z_i\cong G_i$ and the transgression maps $tra_i : \Hom(Z_i, \mathbb C^\times) \to \Ho^2(G_i, \mathbb C^\times)$ are isomorphisms. Let $\sigma : \Hom(Z_1, \mathbb C^\times) \to \Hom(Z_2, \mathbb C^\times)$ be an isomorphism such that for every $\chi \in  \Hom(Z_1, \mathbb C^\times)$,  there is a dimension preserving bijection between the sets $\irr(\tilde{G}_1 \mid \chi)$ and $ \irr(\tilde{G}_2 \mid \sigma(\chi))$.
Then $G_1 \sim_\mathbb C G_2$.
\end{lemma}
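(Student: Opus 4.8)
The plan is to build the required isomorphism $\phi : \Ho^2(G_1, \mathbb{C}^\times) \to \Ho^2(G_2, \mathbb{C}^\times)$ directly out of the given data and then verify the twisted algebra isomorphisms one cohomology class at a time. Since $\mathrm{tra}_1$, $\sigma$, and $\mathrm{tra}_2$ are all isomorphisms, I would set
\[
\phi := \mathrm{tra}_2 \circ \sigma \circ \mathrm{tra}_1^{-1} : \Ho^2(G_1, \mathbb{C}^\times) \to \Ho^2(G_2, \mathbb{C}^\times),
\]
which is at once an isomorphism of abelian groups. It then remains only to check that $\mathbb{C}^\alpha G_1 \cong \mathbb{C}^{\phi(\alpha)} G_2$ for every class $[\alpha]$.

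To do so, fix $[\alpha] \in \Ho^2(G_1, \mathbb{C}^\times)$ and put $\chi := \mathrm{tra}_1^{-1}([\alpha]) \in \Hom(Z_1, \mathbb{C}^\times)$, so that $\mathrm{tra}_1(\chi) = [\alpha]$ and, by the very definition of $\phi$, $\mathrm{tra}_2(\sigma(\chi)) = \phi([\alpha])$. Applying Theorem \ref{proj-ord} to the representation group $\tilde{G}_1$, the set $\irr^\alpha(G_1)$ is in dimension-preserving bijection with $\irr(\tilde{G}_1 \mid \chi)$; applying it to $\tilde{G}_2$, the set $\irr^{\phi(\alpha)}(G_2)$ is in dimension-preserving bijection with $\irr(\tilde{G}_2 \mid \sigma(\chi))$. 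The hypothesis of the lemma supplies a dimension-preserving bijection between $\irr(\tilde{G}_1 \mid \chi)$ and $\irr(\tilde{G}_2 \mid \sigma(\chi))$. Composing these three bijections yields a dimension-preserving bijection between $\irr^\alpha(G_1)$ and $\irr^{\phi(\alpha)}(G_2)$.

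Finally I would invoke the Wedderburn decomposition, writing $\mathbb{C}^\alpha G_1 \cong \prod_i \mathbb{C}^{n_i \times n_i}$ where the $n_i$ range over the degrees of the members of $\irr^\alpha(G_1)$, and likewise $\mathbb{C}^{\phi(\alpha)} G_2 \cong \prod_j \mathbb{C}^{m_j \times m_j}$. Two such finite products of full matrix algebras over $\mathbb{C}$ are isomorphic as $\mathbb{C}$-algebras exactly when their multisets of matrix sizes agree, and the dimension-preserving bijection obtained in the previous step guarantees precisely this coincidence of multisets. Hence $\mathbb{C}^\alpha G_1 \cong \mathbb{C}^{\phi(\alpha)} G_2$, and as $[\alpha]$ was arbitrary we conclude $G_1 \sim_\mathbb{C} G_2$ via $\phi$.

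The one point that genuinely requires care, rather than a formal chaining of bijections, is the dimension-preserving nature of the correspondence in Theorem \ref{proj-ord}: one must note that an irreducible ordinary representation of $\tilde{G}_i$ lying above the linear character $\chi$ restricts on the central subgroup $Z_i$ to a multiple of $\chi$ and therefore descends to an $\alpha$-representation of $G_i$ of the same degree. Keeping track of the identity $\mathrm{tra}_i(\text{character}) = [\text{cocycle class}]$ through the definition of $\phi$ is the other bookkeeping step; everything beyond these reduces to the structure theory of semisimple $\mathbb{C}$-algebras.
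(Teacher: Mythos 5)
Your proof is correct and is essentially the same argument as the one behind this statement: the paper quotes the lemma from \cite{PGS} without reproducing its proof, and the proof there likewise sets $\phi=\mathrm{tra}_2\circ\sigma\circ\mathrm{tra}_1^{-1}$, chains the dimension-preserving correspondence of Theorem \ref{proj-ord} (applied to each $\tilde{G}_i$) with the hypothesized bijection between $\irr(\tilde{G}_1\mid\chi)$ and $\irr(\tilde{G}_2\mid\sigma(\chi))$, and concludes via the Wedderburn decomposition. Your closing observation correctly isolates the only non-formal point, namely that the bijection of Theorem \ref{proj-ord} preserves degrees because an irreducible representation of $\tilde{G}_i$ lying over $\chi$ acts on the central subgroup $Z_i$ by the scalar character $\chi$ and descends along a section to a projective representation of $G_i$ of the same degree.
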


 \section{Main results}\label{Mainresult}
%(i) extra-special $p$-groups, (ii) .
%\bigskip
%
%\begin{thm}
%Let $G$ be a finite group which is central type, then $G$ must be capable.
%\end{thm}
%\begin{proof}
%Suppose $G$ is not capable. Then there is a central subgroup $Z$ of $G$ such that the following sequence is exact.
%\[
% 1  \xrightarrow{}   \mathrm{Hom}(G'\cap Z, \mathbb C^\times)  \xrightarrow{\mathrm{tra}}  \Ho^2(G/Z, \mathbb C^\times)   \xrightarrow{\mathrm{inf}}  \Ho^2(G, \mathbb C^\times)  \xrightarrow{}  1. 
%\]
%Since for any $[\alpha] \in \Ho^2(G, \mathbb C^\times)$ we have the bijective correspondence between the sets $\irr^\alpha(G)$
%and $\cup_{\{[\beta] \in\Ho^2(G/Z, \mathbb C^\times) \mid \inf([\beta])=\alpha\}}\irr^\alpha(G/Z)$.
%Hence $G$ can not be central type.
%
%\end{proof}

\noindent
\textbf{Proof of Theorem \ref{mainthm}.} Let $[\alpha]\in \Ho^{2}(G_1,\mathbb{C}^{\times}) $. Then, there is a $[\beta] \in  \Ho^{2}(G_1/Z_1,\mathbb{C}^{\times}) $ such that $\inf_1([\beta])=[\alpha]$.
	Define ${\psi} ~:~\Ho^{2}(G_1,\mathbb{C}^{\times}) \to \Ho^{2}(G_2,\mathbb{C}^{\times})$ by 
	$${\psi} ([\alpha]) = {\inf}_2\circ \bar{\phi}([\beta]) .$$
	 First, we prove that the map $\psi$ is well defined. Let $[\alpha]=\inf_{1}([\beta_{1}]) =\inf_{1}([\beta_{2}]),$ for $[\beta_{j}]\in  \Ho^2(G_1/Z_{1}, \mathbb C^\times)$, $j=1,2$. This implies that $[\beta_{2}][\beta_{1}]^{-1}= \operatorname{tra}_{1}(\chi)$ for some $\chi \in \operatorname{Hom}(Z_{1},\mathbb{C}^{\times})$. Hence $[\beta_{2}]=\operatorname{tra}_{1}(\chi).[\beta_{1}]$. Consequently,
	 $${\inf}_{2} \circ \bar{\phi}([\beta_{2}])=\operatorname{inf}_{2}(\bar{\phi}[\beta_{1}])\operatorname{inf}_{2}(\bar{\phi}\circ \operatorname{tra}_{1}(\chi))=\operatorname{inf}_{2}(\bar{\phi}[\beta_{1}]).$$ This proves that the map $\psi$ is well defined. 
	 It is easy to see that ${\psi} $ is a surjective homomorphism as $\inf_j$ are surjective maps, for $j=1,2$. 
	 
	 Next, we prove that ${\psi}$ is injective. For this, let ${\psi} ([\alpha_{1}])={\psi} ([\alpha_{2}])$ for $[\alpha_{j}]\in  \Ho^2(G_1, \mathbb C^\times)$, where
	 $[\alpha_{j}]=\operatorname{inf}_{1}([\beta_{j}])$ for some $[\beta_{j}]\in  \Ho^2(G_1/Z_{1}, \mathbb C^\times)$ for $j=1,2$. Then $\operatorname{inf}_{2}\circ \bar{\phi}([\beta_{2}][\beta_{1}]^{-1}) =1$. 
	 Consequently, $\bar{\phi}([\beta_{2}][\beta_{1}]^{-1})=\operatorname{tra}_{2}(\chi_{2}) =\operatorname{tra}_{2}(\bar{i}(\chi_{1}))=\bar{\phi} \circ \operatorname{tra}_{1}(\chi_{1})$  for some $\chi_j \in \mathrm{Hom}(Z_{j}, \mathbb C^\times)$. This implies that $[\beta_{2}] = [\beta_{1}].\operatorname{tra}_{1}(\chi_{1})$. 
	 Hence $[\alpha_{2}]=\operatorname{inf}_{1}([\beta_{1}]).\operatorname{inf}_{1}(\operatorname{tra}_{1}(\chi_{1})) = [\alpha_{1}].$
	 Thus ${\psi}$ is an isomorphism.
Observe that the following diagram is commutative.	 \[
\xymatrix{ 
	1 \ar[r] &  \mathrm{Hom}(Z_{1}, \mathbb C^\times) \ar[d]^{\bar{i}} \ar[r] ^{\mathrm{tra}_{1}} & \Ho^2(G_1/Z_{1}, \mathbb C^\times)  \ar[d]^{\bar{\phi}}  \ar[r] ^{\mathrm{inf}_{1}} & \Ho^2(G_1, \mathbb C^\times)  \ar[d]^{{\psi}}  \ar[r] & 1\\
	1 \ar[r] &  \mathrm{Hom}(Z_{2}, \mathbb C^\times) \ar[r] ^{\mathrm{tra}_{2}} & \Ho^2(G_2/Z_{2}, \mathbb C^\times) \ar[r] ^{\mathrm{inf}_{2}} & \Ho^2(G_2, \mathbb C^\times) \ar[r] & 1.\\
}
\]		 
Due to Theorem \ref{dimpreserve} and by the assumptions, for every  $[\alpha] \in \Ho^{2}(G_1,\mathbb{C}^\times) $, we have
	\begin{eqnarray*}
		\mathbb C^\alpha G_1   && \cong \prod_{\{[\beta] \in   \Ho^2(G_1/Z_{1}, \mathbb C^\times) \mid \inf_1([\beta]) = [\alpha]\}} \mathbb C^\beta [G_1/Z_1] 
		 \cong 
				 \prod_{\{[\beta]\in   \Ho^2(G_1/Z_{1}, \mathbb C^\times) \mid \inf_1([\beta])=[\alpha]\}} \mathbb C^{\bar{\phi}(\beta)} [G_2/Z_2] \\
				 && \cong
				 \prod_{\{[\beta'] \in  \Ho^2(G_2/Z_{2}, \mathbb C^\times) \mid \inf_2([\beta'])={\psi}([\alpha]\}} \mathbb C^{\beta'} [G_2/Z_2]\\
&& \cong \mathbb C^{{\psi}(\alpha)} G_2.
	\end{eqnarray*}
Therefore, $\mathbb C^\alpha G_1 \cong \mathbb C^{{\psi}(\alpha)} G_2$ for each $[\alpha] \in \Ho^{2}(G_1,\mathbb{C}^\times) $.
Hence $G_1\sim_{\mathbb C} G_2$ via the map ${\psi}$. 

\qed

%\end{proof} 

%\noindent \textbf{Proof of Corollary \ref{result1}}
%\begin{proof}
%
%
%\end{proof}
% 

\begin{remark}
Lemma $3.2$ of  \cite{PGS} follows from Corollary $\ref{result1}$ by taking $Z_i=G_i'$. 
 \end{remark}

 As an application of Corollary  \ref{result1} we have the following result. Note that in the following theorem  $\Phi_{12}, \Phi_{13}$ are two isoclinism classes of groups of order $p^6$ given in \cite{James}.
 
 \begin{thm}\label{application1}
 The set $\{\Phi_i(21^4)a, \Phi_i(21^4)b, \Phi_i(21^4)c, \Phi_i(21^4)d, \Phi_i(21^4)e; ~ i=12, 13\}$ 
 form the following equivalence classes w.r.t the relation $\sim_\mathbb C~:$
$\{\Phi_{12}(21^4)a, \Phi_{12}(21^4)c, \Phi_{12}(21^4)d\},\\
\{ \Phi_{12}(21^4)b, \Phi_{12}(21^4)e\}$, 
$\{\Phi_{13}(21^4)a, \Phi_{13}(21^4)b, \Phi_{13}(21^4)c,  \Phi_{13}(21^4)d, \Phi_{13}(21^4)e\}$.
\end{thm}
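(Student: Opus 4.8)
The plan is to apply Corollary \ref{result1} to each pair of groups in the stated equivalence classes. Since the groups $\Phi_i(21^4)x$ for $i \in \{12,13\}$ and $x \in \{a,b,c,d,e\}$ all have order $p^6$ and share the same isoclinism class (for fixed $i$), they have isomorphic derived subgroups and centers, and their central quotients are closely related. First I would, for each group $G = \Phi_i(21^4)x$ in the list, identify a suitable central subgroup $Z \subseteq Z^*(G) \cap G'$ so that the transgression--inflation sequence of Theorem \ref{capable} is exact; the natural candidate is $Z = G'$ or an appropriate subgroup thereof, chosen so that the quotient $G/Z$ becomes a group of smaller order (typically $p^4$ or $p^5$) whose TGRIP is already understood or directly checkable.

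The main strategy is then to show, for two groups $G_1, G_2$ claimed to be equivalent, that there exist isomorphisms $i \colon Z_2 \to Z_1$ and $\phi \colon G_2/Z_2 \to G_1/Z_1$ making Diagram $1$ commute for the induced maps $\bar{i}$ and $\bar{\phi}$. Here I would exploit the explicit presentations of these groups from \cite{James}: the generators and commutator relations are given concretely, so one can write down an explicit candidate isomorphism $\phi$ between the central quotients and an explicit $i$ between the relevant central subgroups. The equivalence $G_1/Z_1 \sim_{\mathbb{C}} G_2/Z_2$ needed as a hypothesis should follow either because the two quotients are genuinely isomorphic as groups (giving $\bar{\phi}$ via the induced map on cohomology, as described just before Corollary \ref{result1}), or because the quotients fall into a case where the TGRIP is already resolved. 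The commutativity of Diagram $1$ then reduces to verifying that $\bar{\phi} \circ \mathrm{tra}_1 = \mathrm{tra}_2 \circ \bar{i}$ on $\mathrm{Hom}(Z_1, \mathbb{C}^\times)$, which is a compatibility check between the chosen section-dependent transgression cocycles and the isomorphism $\phi$.

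For the separation of the classes --- that is, to confirm that $\{\Phi_{12}(21^4)a, \Phi_{12}(21^4)c, \Phi_{12}(21^4)d\}$ and $\{\Phi_{12}(21^4)b, \Phi_{12}(21^4)e\}$ are genuinely distinct equivalence classes, not merely that the listed partition is achievable --- I would compute a distinguishing invariant. The natural invariant is the multiset of Wedderburn decompositions $\mathbb{C}^\alpha G$ as $[\alpha]$ ranges over $\Ho^2(G, \mathbb{C}^\times)$; by Theorem \ref{proj-ord} this is controlled by the ordinary representation theory of a representation group $\tilde{G}$ together with the fibers over $\mathrm{Hom}(Z, \mathbb{C}^\times)$. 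Concretely, comparing the dimensions and multiplicities of irreducible projective representations for the various cohomology classes should yield a numerical discrepancy (for instance in the number of linear versus higher-dimensional $\alpha$-representations, or in the count of $\alpha$ with $\mathbb{C}^\alpha G$ a full matrix algebra) between a group in one class and a group in the other.

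I expect the main obstacle to be the verification of commutativity of Diagram $1$ at the level of the transgression maps, since this requires matching the explicit $2$-cocycles arising from the central extensions of the two quotients under the chosen isomorphism $\phi$, and the transgression depends on a choice of section; ensuring that $\bar{i}$ can be chosen compatibly with a fixed $\bar{\phi}$ is the delicate point. A secondary difficulty is bookkeeping: with ten groups and two isoclinism families, one must organize the computations so that the equivalences within each class and the inequivalences across classes are both established without redundant case analysis, ideally by reusing the structure of the presentations in \cite{James} and the Schur multiplier data for these groups.
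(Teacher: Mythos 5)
Your strategy for the positive part (the equivalences) is essentially the paper's: apply Corollary \ref{result1} with a central subgroup $Z\subseteq G'$ for which the transgression--inflation sequence is exact, write down explicit isomorphisms between the quotients and the central subgroups from the presentations in \cite{James}, and verify commutativity of Diagram 1 by computing the transgression maps from explicit sections. However, your ``natural candidate'' $Z=G'$ does not work here, and this is the missing key input. All ten groups are special $p$-groups of rank $2$ with $G'\cong C_p\times C_p$ and $G/G'\cong (C_p)^4$; if the sequence of Theorem \ref{capable} were exact with $Z=G'$, then $\Ho^2(G,\mathbb C^\times)$ would be forced to be $\Ho^2((C_p)^4,\mathbb C^\times)/\mathrm{tra}(\mathrm{Hom}(G',\mathbb C^\times))\cong (C_p)^4$ for every such group, contradicting the fact (used by the paper) that $\Ho^2(\Phi_{12}(21^4)a,\mathbb C^\times)\ncong \Ho^2(\Phi_{12}(21^4)b,\mathbb C^\times)$. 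Exactness requires $Z\subseteq Z^*(G)\cap G'$, and the paper's choice is $Z=G^p\cong C_p$, a \emph{proper} subgroup of $G'$, with exactness supplied by \cite[Theorem 1.3]{special} together with Theorem \ref{capable}. Your hedge ``or an appropriate subgroup thereof'' leaves precisely this identification open, and without it the whole Corollary \ref{result1} machinery cannot be started.

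The second gap is in the separation of the classes. You propose comparing multisets of Wedderburn decompositions over all of $\Ho^2(G,\mathbb C^\times)$, computed via representation groups; but you never exhibit the needed discrepancy, and the invariant is coarser than the relation itself: $G\sim_\mathbb C H$ requires in the first place a group isomorphism $\Ho^2(G,\mathbb C^\times)\to\Ho^2(H,\mathbb C^\times)$, so non-isomorphic Schur multipliers preclude equivalence even if the multisets were to agree. That is exactly how the paper separates $\{\Phi_{12}(21^4)a,c,d\}$ from $\{\Phi_{12}(21^4)b,e\}$: it computes $\Ho^2$ through the exact sequence for $Z=G^p$ and observes the multipliers are non-isomorphic --- no representation groups or character-degree counts are needed. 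Moreover, you only address the two $\Phi_{12}$ classes; the $\Phi_{13}$ class must also be separated from both, which the paper does at the trivial cocycle: any isomorphism $\phi$ sends $[1]$ to $[1]$, so $G\sim_\mathbb C H$ forces $\mathbb C G\cong \mathbb C H$, and by \cite[Theorem 2.2]{Bioch} together with \cite[Table 4.1]{James} the ordinary group algebras of groups in $\Phi_{12}$ and $\Phi_{13}$ are not isomorphic.
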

\begin{proof}
By \cite[Theorem 2.2]{Bioch}, the complex group algebras 
$\mathbb C G$ are isomorphic for all groups $G$ belonging to the same isoclinism class.
Hence the complex group algebras are isomorphic for all groups in $\Phi_{12}$ and for all groups in $\Phi_{13}$. By \cite[Table 4.1]{James}, if $G \in \Phi_{12}$, $H  \in \Phi_{13}$, then $\mathbb C G \not\cong \mathbb C H$.

All these groups $G$ are special $p$-groups of rank $2$ with $G/G'\cong (C_p)^4$ and $G^p\cong C_p$. 
From \cite[Theorem 1.3]{special} and Theorem \ref{capable} it follows that the sequence  \[
 1  \xrightarrow{}   \mathrm{Hom}(G^p, \mathbb C^\times)  \xrightarrow{\mathrm{tra}}  \Ho^2(G/G^p, \mathbb C^\times)   \xrightarrow{\mathrm{inf}}  \Ho^2(G, \mathbb C^\times)  \xrightarrow{}  1
\]
is exact, and thus $\Ho^2(G, \mathbb C^\times)$ can be computed. Since  $\Ho^2(\Phi_{12}(21^4)a, \mathbb C^\times)\ncong  \Ho^2(\Phi_{12}(21^4)b, \mathbb C^\times)$, we have $\Phi_{12}(21^4)a \nsim_\mathbb C \Phi_{12}(21^4)b$.

Consider 
$$G_1=\Phi_{12}(21^4)b=\langle \alpha_i, \beta_i, \gamma_i; i=1,2\mid [\alpha_i, \beta_i]=\gamma_i, \alpha_1^p=\gamma_1\gamma_2, \alpha_2^p=\beta_i^p=\gamma_i^p=1\rangle$$
and 
$$G_2=\Phi_{12}(21^4)e=\langle \alpha^\prime_i, \beta^\prime_i, \gamma^\prime_i; i=1,2\mid [\alpha^\prime_i, \beta^\prime_i]=\gamma^\prime_i, {\alpha_1^\prime}^p={\alpha_2^\prime}^p=\gamma^\prime_1\gamma^\prime_2, {\beta_i^\prime}^p={\gamma_i^\prime}^p=1\rangle.$$ 
Now, taking $Z_{1} =G_1^p$, $Z_{2} =G_2^p,$
 we see that $Z_{1} \cong Z_{2} \cong C_{p}$ and 
 $$G_1/Z_{1} \cong G_2/Z_{2}\cong  \langle \bar{\alpha}_i,\bar{\beta}_i;  i=1,2\mid [\bar{\alpha}_1,\bar{\beta}_1]=[\bar{\beta}_2,\bar{\alpha}_2]=\gamma; \bar{\alpha}_i^p=\bar{\beta}_i^p=\gamma^p=1 \rangle.$$
Hence $ \mathrm{Hom}(Z_{1}, \mathbb C^\times) \cong   \mathrm{Hom}(Z_{2}, \mathbb C^\times)$ and  $\Ho^{2}(G_1/Z_{1},\mathbb{C}^{\times}) \cong \Ho^{2}(G_2/Z_{2},\mathbb{C}^{\times}).$  
Consider the following diagram
	 \[
	\xymatrix{ 
		1 \ar[r] &  \mathrm{Hom}(Z_{1}, \mathbb C^\times) \ar[d]^{\bar{i}} \ar[r] ^{\mathrm{tra}_{1}} & \Ho^2(G_1/Z_{1}, \mathbb C^\times)  \ar[d]^{\bar{\phi}} \\
		1 \ar[r] &  \mathrm{Hom}(Z_{2}, \mathbb C^\times) \ar[r] ^{\mathrm{tra}_{2}} & \Ho^2(G_2/Z_{2}, \mathbb C^\times),\\
	}
	\]
where the transgression maps  are defined as follows:
	define a section $s_{1}: G_1/Z_{1} \to G_1$ by 
	$s_{1}(\bar{\alpha}_{1}^{i}\bar{\alpha}_{2}^{j}\bar{\beta}_{1}^{k}\bar{\beta}_{2}^{l}\gamma^s)=\alpha_{1}^{i}\alpha_{2}^{j}\beta_{1}^{k}\beta_{2}^{l}\gamma_1^s$. 
	If $u=\bar{\alpha}_{1}^{i}\bar{\alpha}_{2}^{j}\bar{\beta}_{1}^{k}\bar{\beta}_{2}^{l}\gamma^s$ and $v=\bar{\alpha}_{1}^{i'}\bar{\alpha}_{2}^{j'}\bar{\beta}_{1}^{k'}\bar{\beta}_{2}^{l'}\gamma^t$ belongs to $G_1/Z_1$, then it can be readily verified that 
	$$s_{1}(u)s_{1}(v)s_{1}(uv)^{-1} =\gamma_1^{-ki'} \gamma_2^{-lj'} (\gamma_1)^{ki'-lj'}=(\gamma_1\gamma_2)^{-lj'}.$$ 
	Thus 
	$$\operatorname{tra}_{1}(\chi)(u,v)=\chi(\gamma_{1}\gamma_2)^{{-lj'}} \text{ for } \chi \in  \mathrm{Hom}(Z_{1}, \mathbb C^\times).$$
It is also easy to see that for $u=\bar{\alpha}_{1}^{i}\bar{\alpha}_{2}^{j}\bar{\beta}_{1}^{k}\bar{\beta}_{2}^{l}\gamma^s$ and $v=\bar{\alpha}_{1}^{i'}\bar{\alpha}_{2}^{j'}\bar{\beta}_{1}^{k'}\bar{\beta}_{2}^{l'}\gamma^t$ in $G_2/Z_2$,
$$\operatorname{tra}_{2}(\chi)(u,v)=\chi(\gamma'_{1}\gamma'_2)^{{-lj'}} \text{ for } \chi \in  \mathrm{Hom}(Z_{2}, \mathbb C^\times).$$
Hence by defining the isomorphisms $i : Z_{2} \to Z_{1}$ and $\phi :  G_2/Z_{2} \to G_1/Z_{1}$ on the generators by $i(\gamma_1'\gamma_2')=\gamma_1\gamma_2$ and $\phi (\bar{\alpha}_{1}^{i}\bar{\alpha}_{2}^{j}\bar{\beta}_{1}^{k}\bar{\beta}_{2}^{l}\gamma^s)=\bar{\alpha}_{1}^{i}\bar{\alpha}_{2}^{j}\bar{\beta}_{1}^{k}\bar{\beta}_{2}^{l}\gamma^s$
respectively, we see that the induced isomorphisms $\bar{i}$ and $\bar{\phi}$ yield  the  above commutative diagram. 
Consequently, from Corollary \ref{result1}, it immediately follows that $G \sim_\mathbb C H$.

%	
%	Define a section $s_{2}: H/Z_{2} \to H$ by $s_{2}(b_{1}^{i}b_{2}^{j}b_{3}^{k}b_{4}^{l}Z_{2})=b_{1}^{i}b_{2}^{j}b_{3}^{k}b_{4}^{l}$. Following the same arguments as in $G$, we obtain that for $u=b_{1}^{i}b_{2}^{j}b_{3}^{k}b_{4}^{l}Z_{2}$ and $v=b_{1}^{i'}b_{2}^{j'}b_{3}^{k'}b_{4}^{l'}Z_{2}$, we have $s_{2}(u)s_{2}(v)s_{2}(uv)^{-1} =z_{2}^{-(i'j+k'l)}$. Hence $\operatorname{tra}_{2}(\chi)(u,v)=\chi(z_{2})^{-(i'j+k'l)}.$ 
	It can be verified that the same arguments holds for the rest of the groups. This completes proof of the theorem.  	
	
\end{proof}

  \subsection{Direct product of groups}\label{section1}
  In this section, we discuss the TGRIP for direct product of groups. \\

\noindent \textbf{Proof of Theorem \ref{direct_product}}
%\begin{proof} 
By Theorem \ref{kar}, it follows that 
\begin{eqnarray*}
 \Ho^{2}(G,\mathbb{C}^\times)  \cong \Ho^{2}(G_{1},\mathbb{C}^\times) \times \Ho^{2}(G_{2},\mathbb{C}^{\times}) \times \cdots \times \Ho^{2}(G_{r},\mathbb{C}^\times) \\
\Ho^{2}(H,\mathbb{C}^\times)  \cong \Ho^{2}(H_{1},\mathbb{C}^\times) \times \Ho^{2}(H_{2},\mathbb{C}^\times) \times \cdots \times \Ho^{2}(H_{r},\mathbb{C}^\times)
\end{eqnarray*}

 Suppose $G_{i} \sim_{\mathbb{C}} H_{i}$. Then there is an isomorphism $\psi_{i}:~\Ho^{2}(G_{i},\mathbb{C}^{\times}) \rightarrow \Ho^{2}(H_{i},\mathbb{C}^{\times})$ such that for every $[\alpha] \in \Ho^{2}(G_{i},\mathbb{C}^{\times})$, $\mathbb{C}^{\alpha}G_{i} \cong \mathbb{C}^{\psi_{i}(\alpha)}H_{i}$ for $1 \leq i\leq r$. 
	Consider the group isomorphism 
	$$\psi : =\psi_{1}\times \psi_{2} \times \cdots \times \psi_{n}:~
	\Ho^{2}(G,\mathbb{C}^{\times}) \to \Ho^{2}(H,\mathbb{C}^{\times}).$$
	Let $[\beta] \in \Ho^{2}(G,\mathbb{C}^{\times})$ and $[\beta_{i}]=\mathrm{res}^G_{G_i}([\beta])$.
 As $G_{i} \sim_{\mathbb{C}} H_{i}$, we have that $\mathbb{C}^{\beta_{i}}G_{i} \cong \mathbb{C}^{\psi_{i}(\beta_{i})}H_{i}$ for $1 \leq i \leq r.$ Further, it follows from \cite[Proposition 1.1, page 196]{Karpilovsky} that 
	$$\mathbb{C}^{\beta}G \cong \mathbb{C}^{\beta_{1}}G_{1} \otimes \mathbb{C}^{\beta_{2}}G_{2} \otimes \cdots \otimes \mathbb{C}^{\beta_{r}}G_{r} $$ 
	and 
	$$\mathbb{C}^{\psi(\beta)}H \cong \mathbb{C}^{\psi_{1}(\beta_{1})}H_{1} \otimes \mathbb{C}^{\psi_{2}(\beta_{2})}H_{2} \otimes \cdots \otimes \mathbb{C}^{\psi_{r}(\beta_{r})}H_{r}.$$ Consequently, $\mathbb{C}^{\beta}G \cong \mathbb{C}^{\psi(\beta)}H$.

Conversely, suppose $G \sim_{\mathbb{C}} H$. Then there is an isomorphism $\psi:~ \Ho^{2}(G,\mathbb{C}^{\times}) \to \Ho^{2}(H,\mathbb{C}^{\times})$ such that $\mathbb{C}^{\alpha} G \cong \mathbb{C}^{\psi(\alpha)} H$ for  $[\alpha] \in \Ho^{2}(G,\mathbb{C}^{\times})$. 
Let $\psi_{i}$ denotes the restriction of $\psi$ on $ \Ho^{2}(G_i,\mathbb{C}^{\times})$ for $1 \leq i \leq r$. Observe that $\psi_i:~ \Ho^{2}(G_i,\mathbb{C}^{\times}) \to \Ho^{2}(H_i,\mathbb{C}^{\times})$ are isomorphisms as $(k_{i},k_{j})=1$. Let $[\beta_i] \in \Ho^{2}(G_{i},\mathbb{C}^{\times})$ and we denote $[\beta]=([1],[1],\cdots, [\beta_i], [1], \cdots, [1]) \in \Ho^{2}(G,\mathbb{C}^{\times})$. Then we have 
\begin{eqnarray*}
\mathbb{C}^{\beta}G & \cong & \mathbb{C}G_{1} \otimes \mathbb{C}G_{2} \otimes \cdots \otimes  \mathbb{C}^{\beta}G_{i}  \otimes 
 \cdots \otimes  \mathbb{C}G_{r}, \\
 \mathbb{C}^{\psi(\beta)}H & \cong &  \mathbb{C}H_{1} \otimes \mathbb{C}H_{2} \otimes \cdots \otimes  \mathbb{C}^{\psi_i(\beta)}H_{i}  \otimes 
 \cdots \otimes  \mathbb{C}H_{r}.
\end{eqnarray*}
Since $ \mathbb{C}^{\beta}G \cong \mathbb{C}^{\psi(\beta)}H$ and $(k_{i},k_{j})=1$, by dimension consideration, we have $ \mathbb{C}^{\beta}G_{i}  \cong \mathbb{C}^{\psi_i(\beta)}H_{i}$.
This proves that $G_{i} \sim_{\mathbb{C}} H_{i}$ for $1 \leq i \leq r$.
 Hence the result follows.
 
 \qed
%\end{proof}

Based on the above result, it follows that it is enough to consider $p$-groups.
%Let $G=G_1 \times G_2, H=H_1 \times H_2$ be $p$-groups. 
We propose the following three conditions that relate to the TGRIP of the group and the TGRIP of their components.

\begin{definition}
We say that $p$-groups $G=G_{1}\times G_{2}$ and $H=H_{1}\times H_{2}$ satisfy conditions (A), (B), or (C) respectively, if 
\begin{itemize} \item [(A)] $G_{i} \sim_\mathbb{C} H_{i}$ for $i=1,2$.
\item [(B)] $G_{i}/G_{i}' \cong H_{i}/H_{i}'$ for $i=1,2$. 
%\item [(C)] $G_{i}$ is isoclinic to $H_{i}$. \gurleen{This is stronger than condition (B)} 
\item [(C)] $G \sim_\mathbb{C}H$. 
\end{itemize}
\end{definition}

Now, we provide several examples to show the relations between (A), (B), and (C).\\

\noindent \textbf{Example 1:}
The following example says that (A) does not imply (B), (C).
Consider the  groups of order $p^4$ 
\[
G_1=\langle a, b, c\mid [b,c]=a^p, a^{p^2}=b^p=c^p=1 \rangle,
\]
\[
H_1=\langle a, b, c\mid [a,c]=b, a^{p^2}=b^p=c^p=1 \rangle.
\]
It follows from \cite[Theorem 4.3]{MS} that $G_1 \sim_\mathbb C H_1$. 
Suppose $G=G_1 \times C_p$ and $H=H_1 \times C_p$. 
Note that $G_{1}/G_{1}' \cong C_{p} \times C_{p} \times C_{p}$, whereas $H_{1}/H_{1}' \cong C_{p^{2}} \times C_{p}$.
Hence, due to Theorem \ref{kar}, it follows that $\Ho^2(G,\mathbb C^\times)$ and $\Ho^2(H,\mathbb C^\times)$ are not isomorphic. 
 Therefore, $G \nsim_\mathbb C H$, and conditions (B), (C) do not hold in this case.\\

 \textbf{Example 2:}
 Consider the following groups of order $p^5$. In this case (C) holds, but (A) and (B) do not hold.
 \[
 G=\langle a, b\mid [b,a]=a^p, a^{p^2}=b^p=1\rangle \times C_{p^2},
 \]
 \[
 H=\langle a, b\mid [b,a]=a^{p^2}, a^{p^3}=b^p=1\rangle \times C_p.
 \]
 Taking $G_1=\langle a, b\mid [b,a]=a^p, a^{p^2}=b^p=1\rangle$, $H_1=\langle a, b\mid [b,a]=a^{p^2}, a^{p^3}=b^p=1\rangle$ and $G_2=C_{p^2}$, $H_2=C_p$,  we have  $G_i \nsim_\mathbb C H_i$ for $i=1,2$. But $G \sim_\mathbb C H$, follows from Theorem \ref{groupsorderp5} (i).
 \\

\textbf{Example 3:}
In the  following example   $(A)$, $(B)$ and $(C)$ holds.
Consider the following groups of order $p^4$: 
\[
G_1=\langle a, b, c\mid [b,c]=a^p, a^{p^2}=b^p=c^p=1 \rangle,
\]
\[
H_1=\langle \bar{a},  \bar{b},  \bar{c}\mid [ \bar{a}, \bar{c}]= \bar{a}^p,  \bar{a}^{p^2}= \bar{b}^p= \bar{c}^p=1 \rangle.
\]
It follows from \cite[Theorem 4.3]{MS} that $G_1 \sim_\mathbb C H_1$. 
It is also easy to check that $G_1, H_1$ satisfy the hypotheses of Corollary \ref{result1} by taking the central subgroups  $G_1', H_1'$ respectively.
Thus, by Corollary \ref{coro1},
$G_1 \times A \sim_\mathbb C H_1 \times A$, for every finite abelian group $A$.

Now, we prove Corollary \ref{coro1} which describes that  $(A)$ implies $(C)$ under certain conditions.\\

\noindent\textbf{Proof of Corollary \ref{coro1}}
% \begin{proof}
 (1) Consider $G=G_1\times G_2$ and $H=H_1\times H_2$. Since $G_i$ and $H_i$ satisfy the hypotheses of Corollary \ref{result1}, there are  central subgroups $Z_i, \bar{Z_i}$ of $G_i$ and $H_i$ respectively, and there are isomorphisms $\phi_i: H_i/\bar{Z}_i\to G_i/Z_i$ and $j_i: \bar{Z}_i \to Z_i$ such that $\bar{\phi_i}\circ \tra_1=\tra_2\circ \bar{j_i}$, for the induced isomorphisms  $\bar{\phi_i}$ and $\bar{j_i}$ for $i=1,2$.
Thus, we have the following commutative diagram.
	\[
\xymatrix{ 
	1 \ar[r] &  \mathrm{Hom}(Z_{i}, \mathbb C^\times) \ar[d]^{\bar{j_i}} \ar[r] ^{\mathrm{tra}_{1}} & \Ho^2(G_i/Z_{i}, \mathbb C^\times)  \ar[d]^{\bar{\phi_i}} \\
	1 \ar[r] &  \mathrm{Hom}(\bar{Z_{i}}, \mathbb C^\times) \ar[r] ^{\mathrm{tra}_{2}} & \Ho^2(H_i/\bar{Z_{i}}, \mathbb C^\times).\\
}
\]
By \cite[Theorem 1.5.1]{GK}, it follows that $\res^{G_i}_{Z_i}: \Ho^2(G_i, \mathbb C^\times) \to \Ho^2(Z_{i}, \mathbb C^\times)$ is trivial which implies that $Z_i\subseteq G_i'$, and similarly $\bar{Z_i}\subseteq H_i'$.
We want to show that $G$ and $H$ satisfy the hypotheses of Corollary \ref{result1}.  
 Taking $\phi=(\phi_1, \phi_2)$ and $j=(j_1,j_2)$
 we have the following commutative diagram for the induced isomorphisms $\bar{\phi}$ and $\bar{j}$ with exact rows.
 	\[
	\xymatrix{ 
		1 \ar[r] &  \mathrm{Hom}(Z_{1}\times Z_2, \mathbb C^\times) \ar[d]^{\bar{j}} \ar[r]^{\mathrm{tra}_{1}\;\;\;\;\;} & \Ho^2(G_1/Z_{1}\times G_2/Z_2, \mathbb C^\times)  \ar[d]^{\bar{\phi}} \ar[r]^{ \;\;\;\;\;\;\;\;\;\;\;\; \mathrm{inf}_{1}} & \Ho^2(G, \mathbb C^\times) \ar[r] &  1 \\
		1 \ar[r] &  \mathrm{Hom}( \bar{Z_1}\times  \bar{Z_2}, \mathbb C^\times) \ar[r] ^{\mathrm{tra}_{2}\;\;\;\;} & \Ho^2(H_1/\bar{Z_1}\times H_2/\bar{Z_2}, \mathbb C^\times)\ar[r]^{\;\;\;\;\;\;\;\;\;\;\;\;\; \mathrm{inf}_{2}} & \Ho^2(H, \mathbb C^\times) \ar[r] &  1 
	}
	\]
	Hence the result follows.\\
	
	(2) The proof immediately follows from the fact that $G_1\times  A$ and $H_1\times  A$ satisfy the hypothesis of Corollary \ref{result1}. 
 
 \qed
 %\end{proof}

% \begin{thm}
% If $G_1\sim_\bC G_2$ and $G_1/G_1' \cong G_2/G_2'$, then for any finite abelian group $A$, $G_1 \times A\sim_\bC G_2 \times A$ for $n>0$.
% \end{thm}
%  \begin{proof}
%  	not proved yet.
%% We prove for $n=1$. Then result follows by induction on $n$.
%% Let $H_i=G_i \times (\bZ/p\bZ)=G \times \langle z_i \rangle$ , $i=1,2$.
%% Let $\phi: \Ho^2(G_1,\mathbb C^\times) \to \Ho^2(G_2,\mathbb C^\times)$ be an isomorphism such that
%% 
%% 
%%Suppose $\tilde{G_i}$ is a representation group of $G_i$.
%%
%%
%%
%%Then $\tilde{H_i}=(\bZ/p\bZ)^{n+1} \rtimes \tilde{G}$ is a representation group of $H_i$.
% \end{proof}

%
%
%\section{Semi-direct product of groups}
%By \cite[Corollary 2.2.6]{Karpilovsky} we have the following result.
%\begin{thm} Let $G=N \rtimes T$ such that $(|N|, |T|)=1$, then \[
%\Ho^2(G, \mathbb C^\times) \cong \Ho^2(T, \mathbb C^\times) \times \Ho^2(N, \mathbb C^\times)^T.
%\]
%\end{thm} 
%\begin{definition}\sumana{We have to write it properly}
%We say that groups $G=N_1 \rtimes T_1$ and $H=N_2 \rtimes T_2$ satisfy conditions A, B, C, or D, respectively, if 
%\begin{itemize} \item [(A)] $N_{i} \sim_\mathbb{C} T_{i}$.
%\item [(B)] $N_{i}/N_{i}' \cong T_{i}/T_{i}'$.
%\item [(C)] $N_{i}$ is isoclinic to $T_{i}$. \gurleen{This is stronger than condition (B)} 
%\item [(D)] $G \sim_\mathbb{C} H$. 
%\end{itemize}
%\end{definition}
%

\subsection{Central product of groups}\label{section2}
 In this section, we study TGRIP for the central product of groups.\\
 
 \noindent \textbf{Proof of Theorem \ref{central product}}
	If $G$ is a central product of $G_1$ and $G_2$ and  $Z_1=G_{1}'\cap G_{2}'$, then it follows from  \cite[Theorem A]{centralproduct} that the following sequence
	\[
		1 \to  \mathrm{Hom}(Z_{1}, \mathbb C^\times) \xrightarrow{\mathrm{tra}}  \Ho^2(G/Z_{1}, \mathbb C^\times)    \xrightarrow{\mathrm{inf}}  \Ho^2(G, \mathbb C^\times) \to  1. 	
	\]
	is exact.
	Now, by Theorem \ref{mainthm}, the result follows.
	
\qed

As an application of this result, we provide some examples.
Consider the groups of order $p^6$ belonging to the isoclinism class $\Phi_5$ mentioned in \cite{James}. 
\begin{thm}\label{example2}
Let $G,H$ be two groups of order $p^6$ belonging to the set $\{\Phi_5(311), \Phi_5(2211)a, $
$\Phi_5(2211)b, \Phi_5(21^4)c\}$. Then $G \sim_\mathbb C H$.
\end{thm}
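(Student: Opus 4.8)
The plan is to exhibit each of the four groups as a central product and then invoke Theorem \ref{central product}, in direct analogy with the argument given for Theorem \ref{application1}. First I would read off from the presentations in \cite{James} a decomposition $G = G_1 G_2$ of each group as a central product of two normal subgroups, arranged so that $G' = G_1'G_2'$ and so that the amalgamated part, together with the chosen $p$-th power relations, accounts for the labels $(311)$, $(2211)$ and $(21^4)$ that distinguish the groups inside the single isoclinism class $\Phi_5$. Setting $Z_1 = G_1' \cap G_2'$ (which I expect to be $\cong C_p$, so that $G/Z_1$ has order $p^5$), the exactness of
\[
1 \to \mathrm{Hom}(Z_1, \mathbb C^\times) \xrightarrow{\mathrm{tra}} \Ho^2(G/Z_1, \mathbb C^\times) \xrightarrow{\mathrm{inf}} \Ho^2(G, \mathbb C^\times) \to 1
\]
is supplied directly by \cite[Theorem A]{centralproduct}, exactly as in the proof of Theorem \ref{central product}. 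Hence the only remaining hypotheses to verify are $G/Z_1 \sim_\mathbb{C} H/Z_2$ via some map $\bar\phi$ and the existence of an isomorphism $\bar i$ making Diagram 1 commute.

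Second, I would compute the quotients $G/Z_1$. Since $Z_1 \subseteq G'$, these are groups of order $p^5$, and the abelianizations of the four $\Phi_5$ groups survive into the quotients, so I do not expect $G/Z_1 \cong H/Z_2$ in general but only $G/Z_1 \sim_\mathbb{C} H/Z_2$. The plan is to obtain this latter equivalence from the order-$p^5$ analysis in Theorem \ref{groupsorderp5}, which pins down the relevant $\sim_\mathbb{C}$-class of the quotients, and then to convert that abstract equivalence into the specific $\bar\phi$ needed for the square. For the trivial cohomology class the required isomorphism $\mathbb{C}G \cong \mathbb{C}H$ is immediate from isoclinism via \cite[Theorem 2.2]{Bioch}, as used in Theorem \ref{application1}.

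Third, I would make the transgression maps explicit: choose sections $s_j : G/Z_j \to G$, evaluate the cocycle $s_j(u)s_j(v)s_j(uv)^{-1} \in Z_j$ on generators, and read off a closed formula for $\mathrm{tra}_j(\chi)$, just as was done for $\Phi_{12}(21^4)b$ and $\Phi_{12}(21^4)e$ in the proof of Theorem \ref{application1}. I would then define $\bar i : \mathrm{Hom}(Z_1, \mathbb C^\times) \to \mathrm{Hom}(Z_2, \mathbb C^\times)$ on the generator of $Z_1$ and verify $\bar\phi \circ \mathrm{tra}_1 = \mathrm{tra}_2 \circ \bar i$. With the square commuting, Theorem \ref{central product} yields $G \sim_\mathbb{C} H$ for each pair in the set; comparing all four against one fixed representative and using that $\sim_\mathbb{C}$ is an equivalence relation then collapses the whole set into a single class.

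The main obstacle I anticipate is matching the transgression cocycles across the two sides: the map $\bar\phi$ produced by the order-$p^5$ equivalence of the quotients is not canonical, and it must be chosen so that it simultaneously realizes $G/Z_1 \sim_\mathbb{C} H/Z_2$ and is compatible with the explicit transgressions, so that Diagram 1 commutes on the nose. Fixing the central-product splitting itself is the other delicate point, since the distinguishing power relations of the four groups must be absorbed cleanly into $Z_1$ while the identity $G' = G_1'G_2'$ is preserved; once the splitting and the sections are chosen, the remaining cocycle computation is routine bookkeeping of the kind already carried out in Theorem \ref{application1}.
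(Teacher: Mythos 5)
Your overall architecture --- central product decomposition, $Z_1 = G_1'\cap G_2'$, exactness of the transgression--inflation sequence via \cite[Theorem A]{centralproduct}, explicit transgression cocycles, then Theorem \ref{central product} --- is exactly the paper's. But the middle step, where you establish $G/Z_1 \sim_{\mathbb C} H/Z_2$, contains a genuine gap. For every group in the list the two central factors share the same derived subgroup (all commutator relations land on the single generator $\beta$), so $Z_1 = G_1'\cap G_2' = G'$ and the quotient $G/Z_1$ is the \emph{abelianization} $G/G'$, an abelian group of order $p^5$. Your plan to extract the equivalence of the quotients from Theorem \ref{groupsorderp5} therefore cannot work: that theorem treats only the non-abelian groups of order $p^5$, and these abelian quotients are outside its scope. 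Worse, your stated expectation that the quotients will be non-isomorphic yet $\sim_{\mathbb C}$-equivalent is impossible in this situation: by \cite[Lemma 1.2]{MS} (invoked in the paper just before Theorem \ref{groupsorderp5}), finite abelian groups form singleton classes under $\sim_{\mathbb C}$, so for abelian quotients ``$\sim_{\mathbb C}$'' coincides with ``$\cong$''. Had the quotients genuinely failed to be isomorphic, your entire strategy would collapse at this point.

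What actually saves the theorem --- and what the paper verifies --- is that all four groups have isomorphic abelianizations: $G/Z_1 \cong C_{p^2}\times (C_p)^3$ in each case, because the power relations distinguishing the groups (such as ${\alpha_3'}^p=\beta_2$ in $\Phi_5(2211)b$) are absorbed into $Z_1$. Hence $\bar\phi$ is simply induced by an actual group isomorphism $\phi: H/Z_2 \to G/Z_1$, and the difficulty you anticipate about the non-canonicity of $\bar\phi$ dissolves: the paper takes $\phi(\alpha_i'Z_2)=\alpha_i Z_1$, computes both transgression cocycles with respect to matching sections to be $\chi(\beta_j)^{-(i'j+k'l)}$, and commutativity of Diagram 1 is then immediate. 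The concrete repair to your proposal is to delete the appeal to Theorem \ref{groupsorderp5}, compute the abelianizations directly, check that all four equal $C_{p^2}\times(C_p)^3$, and then run your third step verbatim.
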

\begin{proof}
	Let $$G=\Phi_{5}(21^4)c=\langle \alpha_i, \beta_1; 1 \leq i \leq 4 \mid [\alpha_1,\alpha_2]= [\alpha_3,\alpha_4] =\beta_1, \alpha_1^{p^2}=\alpha_2^p=\alpha_3^p=\alpha_4^p=\beta_1^p=1 \rangle$$ and 
	$$H=\Phi_{5}(2211)b=\langle \alpha^{\prime}_i, \beta_2; 1 \leq i \leq 4 \mid [\alpha^{\prime}_1,\alpha^{\prime}_2]= [\alpha^{\prime}_3,\alpha^{\prime}_4] ={\alpha_3^{\prime}}^p=\beta_2, {\alpha_1^{\prime}}^{p^2}={\alpha_2^{\prime}}^p={\alpha_4^{\prime}}^p=\beta_2^p=1  \rangle.$$ 
	It is easy to see that $G$ is a central product of $G_{1}$ and $G_{2}$ for
	$$G_{1}=\langle \alpha_{1},\alpha_{2}, \beta_1~|~[\alpha_{1},\alpha_{2}]=\beta_{1},~\alpha_{1}^{p^{2}}=\alpha_{2}^{p}=\beta_{1}^{p}=1 \rangle \text{ and }$$
	$$G_{2}=\langle \alpha_3,\alpha_4, \beta_1\mid [\alpha_{3},\alpha_{4}]=\beta_{1},\alpha_{3}^{p}=\alpha_{4}^{p}=\beta_{1}^{p}=1 \rangle.$$ Further, $H$ is a central product of $H_{1}$ and $H_{2}$, where 
	$$H_{1} =\langle \alpha_{1}^{\prime},\alpha_{2}^{\prime}, \beta_2~|~[\alpha^{\prime}_1,\alpha^{\prime}_2]=\beta_{2},~{\alpha_1^{\prime}}^{p^2}={\alpha_2^{\prime}}^p=\beta_{2}^{p}=1\rangle \text{ and }$$
	$$H_{2}=\langle \alpha_{3}^{\prime},\alpha_{4}^{\prime}, \beta_2~|~[\alpha^{\prime}_3,\alpha^{\prime}_4] ={\alpha_3^{\prime}}^p=\beta_2, {\alpha_4^{\prime}}^p=\beta_2^p=1 \rangle.$$ 
	Following the notations provided in Theorem \ref{central product}, we have $Z_{1} \cong Z_{2} \cong C_{p}$ and $G/Z_{1} \cong H/Z_{2} \cong C_{p^{2}} \times (C_{p})^3 $.  
	%Hence $\Ho^{2}(Z_{1},\mathbb{C}^\times) \cong \Ho^{2}(Z_{2},\mathbb{C}^\times)$ and  $\Ho^{2}(G/Z_{1},\mathbb{C}^\times) \cong \Ho^{2}(H/Z_{2},\mathbb{C}^\times).$  
	Consider the following diagram
	\[
	\xymatrix{
		1 \ar[r] &  \mathrm{Hom}(Z_{1}, \mathbb C^\times) \ar[d]^{\bar{i}}\ar[r] ^{\mathrm{tra}_1} & \Ho^2(G/Z_{1}, \mathbb C^\times) \ar[r]\ar[r]^{\mathrm{inf_1}} \ar[d]^{\bar{\phi}}& \Ho^2(G, \mathbb C^\times) \ar[r] &  1,\\
		1 \ar[r] &  \mathrm{Hom}(Z_{2}, \mathbb C^\times) \ar[r] ^{\mathrm{tra}_2} & \Ho^2(H/Z_{2}, \mathbb C^\times) \ar[r]\ar[r]^{\mathrm{inf_2}} & \Ho^2(H, \mathbb C^\times) \ar[r] &  1.
	}
	\]  
	Here, the transgression maps are given as follows:
	define a section $s_{1}: G/Z_{1} \to G$ by $s_{1}(\alpha_{1}^{i}\alpha_{2}^{j}\alpha_{3}^{k}\alpha_{4}^{l}Z_{1})=\alpha_{1}^{i}\alpha_{2}^{j}\alpha_{3}^{k}\alpha_{4}^{l}$. For $u=\alpha_{1}^{i}\alpha_{2}^{j}\alpha_{3}^{k}\alpha_{4}^{l}Z_{1}$ and $v=\alpha_{1}^{i'}\alpha_{2}^{j'}\alpha_{3}^{k'}\alpha_{4}^{l'}Z_{1}$, we have $s_{1}(u)s_{1}(v)s_{1}(uv)^{-1} =\beta_{1}^{-(i'j+k'l)}$. Hence 
	$$\operatorname{tra}_{1}(\chi)(u,v)=\chi(\beta_{1})^{-(i'j+k'l)} \text{ for } \chi \in  \mathrm{Hom}(Z_1, \mathbb C^\times).$$
	Define a section $s_{2}: H/Z_{2} \to H$ by $s_{2}({\alpha_{1}^\prime}^{i}{\alpha_{2}^\prime}^{j}{\alpha_{3}^\prime}^{k}{\alpha_{4}^\prime}^{l}Z_{2})={\alpha_{1}^\prime}^{i}{\alpha_{2}^\prime}^{j}{\alpha_{3}^\prime}^{k}{\alpha_{4}^\prime}^{l}$. Following the same arguments as mentioned above for the group $G$, we obtain that for $u={\alpha_{1}^\prime}^{i}{\alpha_{2}^\prime}^{j}{\alpha_{3}^\prime}^{k}{\alpha_{4}^\prime}^{l}Z_{2}$ and $v={\alpha_{1}^\prime}^{i'}{\alpha_{2}^\prime}^{j'}{\alpha_{3}^\prime}^{k'}{\alpha_{4}^\prime}^{l'}Z_{2}$,  $s_{2}(u)s_{2}(v)s_{2}(uv)^{-1} =\beta_{2}^{-(i'j+k'l)}$. Hence 
	$$\operatorname{tra}_{2}(\chi)(u,v)=\chi(\beta_{2})^{-(i'j+k'l)} \text{ for } \chi \in  \mathrm{Hom}(Z_2, \mathbb C^\times).$$

Define isomorphisms $i : Z_{2} \to Z_{1}$ and $\phi : H/Z_{2} \to G/Z_{1}$ by $i(\beta_{2})=\beta_{1}$ and  $\phi(\alpha^\prime_{i}Z_{2})=\alpha_{i}Z_{1}$ for $1 \leq i \leq 4$ respectively. 
		Then, the induced isomorphisms $\bar{i}$ and $\bar{\phi}$ yield the  commutative diagram. Hence, from Theorem \ref{central product}, it immediately follows that $G \sim_\mathbb C H$. 
		
The same arguments hold for the remaining groups in the list. This finishes the proof.  
	
\end{proof}

\noindent \textbf{Proof of Theorem \ref{extra_special}}
If $G$ and $H$ are extra-special $p$-groups of same order, then $\mathbb{C}G \cong \mathbb{C}H$, due to \cite[Theorem 2.18, page 813]{GK1}.

$(i)$ For $n=1$, by \cite[Theorem 3.3.6]{GK}, it follows that the groups have non-isomorphic Schur multiplier. Hence, the result follows.

$(ii)$ Assume $n>1$. 
Since $G$ is an extra-special $p$-group of order $p^{2n+1}$, by \cite[Theorem 3.3.4]{GK}, $G$ is a central product of $n$ non-abelian groups of order $p^3$. 
Due to \cite[Theorem 3.14]{craven}, there are two non-isomorphic extra-special $p$-groups.
Thus, if $G$ is of exponent $p$, then $G=\prod_{i=1}^n G_i$, where $G_i=\langle x_i, y_i \mid [x_i, y_i]=z, x_i^p=y_i^p=z^p=1\rangle.$
Hence $G'=\langle z \rangle\cong C_p$ and $G/G' \cong \langle x_i G'; 1 \leq i \leq n\rangle \times  \langle y_i G'; 1 \leq i \leq n\rangle\cong (C_p)^{2n}$. 
Further, $[x_i,y_i]=z, 1\leq i \leq n$ and $[x_i, x_j]=[y_i,y_j]=1, 1\leq i, j \leq n$. 
Now
consider the transgression map $\tra_1: \mathrm{Hom}(G', \mathbb C^\times) \to \Ho^2(G/G', \mathbb C^\times)$ which is defined as follows: take a section $s: G/G' \to G$ defined by 
$$s(\prod_{k=1}^n x_k^{i_k}\prod_{k=1}^n y_k^{j_k}G')=\prod_{k=1}^n x_k^{i_k}\prod_{k=1}^n y_k^{j_k}.$$ Then, for
$X=\prod_{k=1}^n x_k^{i_k}\prod_{k=1}^n y_k^{j_k} G', Y=\prod_{k=1}^n x_k^{i'_k}\prod_{k=1}^n y_k^{j'_k} G'$ in $G/G'$, we have
$$s(X)s(Y)s(XY)^{-1}=z^{-\sum_{k=1}^n i_k'j_k}.$$
Hence 
$$\tra_1(\chi)\big(X, Y\big)=\chi(z)^{-\sum_{k=1}^n i_k'j_k} \text{ for } \chi \in  \mathrm{Hom}(G', \mathbb C^\times).$$
Now, if $H$ is an extra-special $p$-group of exponent $p^2$, then $H=\prod_{i=1}^n H_i$  with $H_i=\langle  \tilde{x}_i, \tilde{y}_i\mid [ \tilde{x}_i,\tilde{y}_i]= \tilde{x}_i^p=\tilde{z},  \tilde{y}_i^p=\tilde{z}^p=1\rangle.$
Therefore, $H'=\langle \tilde{z} \rangle$ and $H/H' \cong \langle \tilde{x}_i H'; 1 \leq i \leq n\rangle \times  \langle \tilde{y}_i H'; 1 \leq i \leq n\rangle\cong (C_p)^{2n}$.
In $H$, we have $[\tilde{x}_i,\tilde{y}_i]= \tilde{z}$ for $1\leq i \leq n$ and $[\tilde{x}_i,\tilde{ x}_j]=[\tilde{y}_i,\tilde{y}_j]=1$ for $1\leq i, j \leq n$. 
So, as described above, it is easy to check that the transgression map $\tra_2: \mathrm{Hom}(H', \mathbb C^\times) \to \Ho^2(H/H', \mathbb C^\times)$ is defined by
$$\tra_2(\chi)\big(\prod_{k=1}^n \tilde{x}_k^{i_k}\prod_{k=1}^n \tilde{y}_k^{j_k} H', \prod_{k=1}^n \tilde{x}_k^{i'_k}\prod_{k=1}^n \tilde{y}_k^{j'_k}H'\big)=\chi(\tilde{z})^{-\sum_{k=1}^n i_k'j_k}  \text{ for } \chi \in  \mathrm{Hom}(H', \mathbb C^\times).$$
Consider
the isomorphisms $i: H'\to G'$ and $\phi :~H/H' \to G/G'$ defined on the generators by $i(\tilde{z})=z$  and   $\phi(\tilde{x}_iH')=x_i G', \phi(\tilde{y}_iH')={y}_i G'$ for $1 \leq i \leq n$, respectively. Then the induced isomorphisms $\bar{i}$ and $\bar{\phi}$ yields the following commutative diagram 
 \[
	\xymatrix{ 
		1 \ar[r] &  \mathrm{Hom}(G', \mathbb C^\times) \ar[d]^{\bar{i}} \ar[r] ^{\mathrm{tra}_{1}} & \Ho^2(G/G', \mathbb C^\times)   \ar[d]^{\bar{\phi}}  \\
		1 \ar[r] &  \mathrm{Hom}(H', \mathbb C^\times) \ar[r] ^{\mathrm{tra}_{2}} & \Ho^2(H/H', \mathbb C^\times).
	}
	\]	
Hence the result follows from Theorem \ref{central product}, as $Z_1=G'$ and $Z_2=H'$.

\qed

\section{Groups of order $p^5$, where $p \geq 5$ is prime}\label{groupsp5}
In this section, we study the TGRIP for groups of order $p^5$, where $p \geq 5$ is prime. 
 First, we recall the Clifford theory, which will be used  to describe the irreducible ordinary representations of a group $G$.  Let $N$ be an abelian normal subgroup of $G$ and $\chi \in \irr(N)$. Then the inertia group  $I_G(\chi)=\{g \in  G\mid \chi^g=\chi\}$ is a normal subgroup of $G$ containing $N$, and  it follows by  \cite[Corollary 11.22]{Isaacs-book} that $\chi$ is extendible to a subgroup $N_1$ for $N < N_1 < I_G(\chi)$ if $N_1/N$ is cyclic.
 By \cite[Theorem 6.11]{Isaacs-book}, there is a bijective correspondence between the sets 
$\irr(I_G(\chi)\mid \chi)$ and $\irr(G\mid \chi)$ via the map $\theta \mapsto \mathrm{Ind}^G_{I_G(\chi)}(\theta)$. We also refer to \cite[Theorem 2.3]{PGS} for these results.
We use these facts in the proof of Theorem \ref{groupsorderp5} without further reference.

Recall that a finite group $G$ is of central type if there is a cocycle $\alpha\in Z^2(G,\mathbb C^\times)$ such that $G$ has a unique irreducible $\alpha$-representation, i.e., the twisted group algebra $\mathbb C^\alpha G$ is simple.

%%%%%%%%%%%%%%%%%%%%%%%%
	\begin{thm}\label{centraltype}
	Let $G$ be a non-abelian group of order $p^5$ and $Z$ be a central subgroup of $G$ such that the following conditions are satisfied.
	\begin{enumerate}[(i)]
		\item $G/Z$ is not of central type and $|G/Z|\geq p^2$.
		\item The sequence $1 \to \Hom(Z, \mathbb C^\times)  \xrightarrow{\mathrm{tra}} \Ho^2(G/Z, \mathbb C^\times)  \xrightarrow{\mathrm{inf}} \Ho^2(G, \mathbb C^\times) \to 1$ is exact.
	\end{enumerate}
	Then for each non-trivial $[\alpha] \in \Ho^2(G,\mathbb C^\times)$, $\mathbb C^\alpha G \cong \oplus_{i=1}^{p^3} \mathbb C^{p\times p}.$
\end{thm}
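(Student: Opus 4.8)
The plan is to push the computation down to the quotient $G/Z$ via the dimension-preserving decomposition of Theorem \ref{dimpreserve}, and then to control the block sizes of each factor $\mathbb C^\beta[G/Z]$ by an order argument, invoking the hypothesis that $G/Z$ is not of central type only in one borderline case. First I would extract from (ii) that $Z \subseteq G' \cap Z(G)$: the sequence begins with $\tra$ injective, and exactly as in the proof of Corollary \ref{coro1} (using \cite[Theorem 1.5.1]{GK}) this forces $Z \subseteq G'$; since $Z$ is central, $Z \subseteq G' \cap Z(G)$. Hence Theorem \ref{dimpreserve} applies. Moreover $\inf$ is surjective (exactness at $\Ho^2(G,\mathbb C^\times)$), so every $[\alpha]$ lies in its image. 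Writing $F = \{[\beta] \in \Ho^2(G/Z,\mathbb C^\times) \mid \inf([\beta]) = [\alpha]\}$, which is a coset of $\ker\inf = \mathrm{Im}(\tra) \cong \Hom(Z,\mathbb C^\times)$ and so has $|F| = |Z|$ elements, Theorem \ref{dimpreserve} yields
$$\mathbb C^\alpha G \cong \prod_{[\beta]\in F} \mathbb C^\beta[G/Z].$$
It therefore suffices to prove that each factor is a product of $p\times p$ matrix algebras.

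Next I would pin down the possible orders of $Z$. Injectivity of $\tra$ gives $|Z| = |\Hom(Z,\mathbb C^\times)| \le |\Ho^2(G/Z,\mathbb C^\times)|$. If $|G/Z| = p^2$, then $G/Z$ is abelian with $|\Ho^2(G/Z,\mathbb C^\times)| \le p$, forcing $|Z| \le p$ and hence $|G/Z| = p^4$, a contradiction. Combined with the standing assumption $|G/Z| \ge p^2$ and $Z \ne 1$, this leaves $|G/Z| \in \{p^3, p^4\}$, i.e. $|Z| \in \{p^2, p\}$.

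Now fix $[\beta] \in F$. Since $[\alpha] \ne 1$ and $\inf([\beta]) = [\alpha]$, the class $[\beta]$ is nontrivial, so $\mathbb C^\beta[G/Z]$ has no one-dimensional block — a one-dimensional $\beta$-representation would exhibit $\beta$ as a coboundary. The irreducible $\beta$-dimensions are powers of $p$ whose squares sum to $|G/Z|$. If $|G/Z| = p^3$, the only such powers are $1$ and $p$, so every block has size $p$. If $|G/Z| = p^4$, the candidates are $1, p, p^2$, and a block of size $p^2$ would have dimension $p^4 = |G/Z|$, hence exhaust the algebra, making $\mathbb C^\beta[G/Z]$ simple and $G/Z$ of central type, contradicting (i). In either case $\mathbb C^\beta[G/Z] \cong \prod_{i=1}^{|G/Z|/p^2} \mathbb C^{p\times p}$. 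Counting blocks across the product then gives
$$|F|\cdot \frac{|G/Z|}{p^2} = |Z|\cdot\frac{|G/Z|}{p^2} = \frac{|G|}{p^2} = p^3,$$
so that $\mathbb C^\alpha G \cong \oplus_{i=1}^{p^3}\mathbb C^{p\times p}$, as claimed.

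The only genuinely delicate step — and the sole place where the hypothesis in (i) is actually consumed — is ruling out a $p^2$-dimensional block in the case $|G/Z| = p^4$; this is precisely the non-central-type condition, since such a block would fill the entire algebra. The case $|G/Z| = p^3$ requires no extra input beyond the order bound (indeed, no group of order $p^3$ can be of central type), and establishing $Z \subseteq G' \cap Z(G)$ at the outset is what makes Theorem \ref{dimpreserve} available.
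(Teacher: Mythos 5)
Your proof is correct and takes essentially the same route as the paper's: both apply Theorem \ref{dimpreserve} to decompose $\mathbb C^\alpha G$ as a product of twisted group algebras $\mathbb C^\beta[G/Z]$ over the fiber of $\inf$, then rule out blocks of dimension $1$ (since each such $\beta$ is non-trivial) and of dimension $p^2$ (since that would make $G/Z$ of central type). The paper compresses this into two sentences, and your extra steps --- deriving $Z \subseteq G' \cap Z(G)$ from injectivity of $\tra$ so that Theorem \ref{dimpreserve} applies, excluding $|G/Z| = p^2$, and the final block count --- are precisely the details it leaves implicit (including the shared implicit assumption $Z \neq 1$).
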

\begin{proof}
	 Given  $[\alpha] \in \Ho^2(G, \mathbb C^\times)$, we have 
	\[
\mathbb C^\alpha G \cong \prod_{\{\beta \in \Ho^2(G/Z, \mathbb C^\times)\mid \inf([\beta])=\alpha\}} \mathbb C^\beta [G/Z],	
	\]
due to Theorem \ref{dimpreserve}.
Consequently, the result follows as $p^2 \leq |G/Z| \leq p^4$ and $G/Z$ is not of central type.

	\end{proof}
Before stating our next result, we recall the Schur multiplier  $\Ho^2(G,\mathbb C^\times)$ of non-abelian $p$-groups $G$ of order $p^5$ , where $p\geq 5$ is an odd prime. 
\begin{thm}[Table 2 of \cite {groupsp5}]\label{Schurp5}
Let $G$ be a non-abelian group of order $p^5, p\geq 5$, for odd prime $p$. 
	\begin{enumerate}[(a)]
\item $ \Ho^2(G,\mathbb C^\times)=1$ if and only if $G$ is isomorphic to one of the following groups: 
$\Phi_2(41), $ $\Phi_6(221)a, $ $\Phi_6(221)b_r, r\neq \frac{p-1}{2},  \Phi_6(221)c_r, \Phi_6(221)d_r, \Phi_8(32)$.\\

\item $ \Ho^2(G,\mathbb C^\times) \cong \mathbb Z/p\mathbb Z$ if and only if $G$ is isomorphic to one of the following groups:  $\Phi_2(32)a_1, \Phi_2(32)a_2,$
$\Phi_3(311)a,\Phi_3(311)b_r, \Phi_3(221)a,  \Phi_4(221)a,\Phi_4(221)c,$ $ \Phi_4(221)d_r, r\neq \frac{p-1}{2}, \Phi_4(221)e, \Phi_4(221)f_r, \Phi_6(221)_{b_{\frac{p-1}{2}}}, \Phi_6(221)d_0, \Phi_6(2111)a,  \Phi_6(2111)b_r,\Phi_9(2111)a, \\
\Phi_9(2111)b_r, \Phi_{10}(2111)a_r, \Phi_{10}(2111)b_r$.\\

\item $ \Ho^2(G,\mathbb C^\times) \cong \mathbb Z/p\mathbb Z \times \mathbb Z/p\mathbb Z$ if and only if $G$ is isomorphic to one of the following groups: 
$\Phi_2(311)a, \Phi_2(221)b, \Phi_2(311)b, \Phi_2(311)c, \Phi_3(221)b_r, \Phi_3(2111)d, \Phi_3(2111)e, \Phi_4(221)b.$
\\

\item $ \Ho^2(G,\mathbb C^\times) \cong (\mathbb Z/p\mathbb Z)^3$ if and only if $G$ is isomorphic to one of the following groups: \\$\Phi_2(221)a, \Phi_2(221)d, \Phi_3(2111)a, \Phi_3(2111)b_r, \Phi_3(2111)c, \Phi_4(2111)a, \Phi_4(2111)b, \Phi_4(2111)c,$ $ \Phi_6(1^5), \Phi_7(2111)a, \Phi_7(2111)b_r, \Phi_7(2111)c, \Phi_9(1^5), \Phi_{10}(1^5)$.\\

\item $ \Ho^2(G,\mathbb C^\times)\cong (\mathbb Z/p\mathbb Z)^4$ if and only if $G$ is isomorphic to one of the following groups: \\
$\Phi_2(2111)c, \Phi_2(2111)d, \Phi_{3}(1^5), \Phi_{7}(1^5)$.\\

\item $ \Ho^2(G,\mathbb C^\times) \cong (\mathbb Z/p\mathbb Z)^5$ if and only if $G$ is isomorphic to one of the following groups: \\
$\Phi_2(2111)a, \Phi_2(2111)b,  \Phi_5(2111), \Phi_{5}(1^5)$.\\

\item $ \Ho^2(G,\mathbb C^\times) \cong (\mathbb Z/p\mathbb Z)^6$ if and only if $G$ is isomorphic to $\Phi_{4}(1^5)$.\\

\item $ \Ho^2(G,\mathbb C^\times) \cong (\mathbb Z/p\mathbb Z)^7$ if and only if $G$ is isomorphic to $\Phi_{2}(1^5)$.\\

\item $ \Ho^2(G,\mathbb C^\times) \cong \mathbb Z/p^2\mathbb Z$ if and only if $G$ is isomorphic to $\Phi_{4}(221)_{d_{\frac{p-1}{2}}}, \Phi_{4}(221)_{f_0}$.\\

\item $ \Ho^2(G,\mathbb C^\times) \cong \mathbb Z/p^2\mathbb Z \times  \mathbb Z/p\mathbb Z$ if and only if $G$ is isomorphic to $\Phi_2(221)c$.\\
	\end{enumerate}
\end{thm}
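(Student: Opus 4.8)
Since the statement is a complete classification of $\Ho^2(G,\mathbb C^\times)$ over James's list, the plan is to compute the Schur multiplier of every non-abelian $G$ of order $p^5$ and then obtain each ``if and only if'' by checking that the named groups exhaust the relevant isomorphism type. The computational engine would be the exact sequence of Theorem \ref{capable}: for a central subgroup $Z\subseteq Z^*(G)\cap G'$ one has $\Ho^2(G,\mathbb C^\times)\cong \Ho^2(G/Z,\mathbb C^\times)/\mathrm{Im}(\mathrm{tra})$ with $\mathrm{tra}$ injective, so $|\Ho^2(G,\mathbb C^\times)|=|\Ho^2(G/Z,\mathbb C^\times)|/|Z|$. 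I would choose $Z$ so that $G/Z$ is abelian (often $Z=G'$, or a cyclic/rank-two central subgroup when $G/G'$ is not elementary abelian); then $\Ho^2(G/Z,\mathbb C^\times)$ is given by the classical formula $M\big(\prod_i C_{p^{a_i}}\big)\cong\prod_{i<j}C_{p^{\min(a_i,a_j)}}$, whose invariants are read directly off the power structure in James's presentation.

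The active step is pinning down $\mathrm{Im}(\mathrm{tra})$. Exactly as in the proofs of Theorems \ref{application1}, \ref{example2} and \ref{extra_special}, I would fix a set-theoretic section $s\colon G/Z\to G$ adapted to the generators, compute the factor set $s(u)s(v)s(uv)^{-1}\in Z$, and thereby express $\mathrm{tra}(\chi)(u,v)=\chi(\cdot)$ as an explicit bilinear form in the exponent vectors of $u,v$. The image of $\mathrm{tra}$ is the subgroup of $\Ho^2(G/Z,\mathbb C^\times)$ generated by these classes, and diagonalising the quotient names its isomorphism type. Running this over the ten isoclinism families $\Phi_1,\dots,\Phi_{10}$ and over all admissible power/abelianization types yields entries (a)--(h); the cyclic-of-order-$p^2$ answers (i),(j) appear precisely when $G/Z$ has two cyclic factors of order $p^2$ whose overlap under $\mathrm{tra}$ leaves an undivided $C_{p^2}$.

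Two points demand genuine care, and I expect the second to be the main obstacle. First, the exact sequence requires $Z\subseteq Z^*(G)$, i.e.\ that $G/Z$ be capable; for the non-capable cases, or when no central $Z$ simultaneously makes $G/Z$ abelian and capable, I would fall back on Hopf's formula $M(G)=(R\cap F')/[F,R]$ with an explicit free presentation, or re-select $Z$. Determining the epicenter $Z^*(G)$ group-by-group is the bulk of the bookkeeping. Second, and more delicate, are the parametrised families $\Phi_6(221)b_r$, $\Phi_4(221)d_r$ and $\Phi_4(221)f_r$, whose answers jump at the exceptional parameter $r=\tfrac{p-1}{2}$: there the dependence on $r$ enters the factor set through the power relations, and $\mathrm{Im}(\mathrm{tra})$ drops rank exactly when the congruence $2r+1\equiv 0 \pmod p$ holds. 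Isolating these degenerate values, and verifying that the whole analysis is uniform for every $p\geq 5$ (the hypothesis that excludes the small-prime exceptions in James's classification), is where the real work lies; a computer-algebra check for the first few primes is a useful sanity test but does not substitute for the symbolic argument in $p$.
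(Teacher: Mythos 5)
The first thing to say is that the paper does not prove this statement at all: Theorem \ref{Schurp5} is imported verbatim from the literature (it is Table~2 of \cite{groupsp5}) and is used in this article purely as a black box. So there is no internal proof to compare your attempt against; what you have written is an attempt to reconstruct the proof of the cited source, and it has to be judged on its own terms.

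Judged that way, there is a genuine gap at the core of your plan. Your computational engine is the exact sequence of Theorem \ref{capable} with $Z$ chosen so that $G/Z$ is abelian. But $G/Z$ abelian forces $G'\subseteq Z$, while the hypothesis of Theorem \ref{capable} forces $Z\subseteq Z^*(G)\cap G'$; together these give $Z=G'$ and hence require both $G'\subseteq Z(G)$ (nilpotency class two) and $G'\subseteq Z^*(G)$. Both conditions fail for a large part of the list: the families $\Phi_3,\Phi_9,\Phi_{10}$ (among others) have class at least three, so no central $Z$ has abelian quotient, and even in class two the condition $G'\subseteq Z^*(G)$ is frequently false --- for the extraspecial group of order $p^3$ and exponent $p$ one has $Z^*(G)=1$, and your formula $|\Ho^2(G,\mathbb C^\times)|=|\Ho^2(G/G',\mathbb C^\times)|/|G'|$ would predict a trivial multiplier, whereas in fact $\Ho^2(G,\mathbb C^\times)\cong C_p\times C_p$. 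So the exact sequence is not merely inapplicable there; applied blindly it gives wrong answers. Your fallback remark also inverts the capability criterion: $Z\subseteq Z^*(G)$ is \emph{not} equivalent to $G/Z$ being capable --- capability of $G/Z$ corresponds to the opposite inclusion $Z\supseteq Z^*(G)$, since the epicenter is the smallest central subgroup with capable quotient. Finally, the two most delicate points --- determining the image of transgression for the parametrised families $\Phi_4(221)d_r$, $\Phi_4(221)f_r$, $\Phi_6(221)b_r$ (with the jump at $r=\tfrac{1}{2}(p-1)$) and identifying where the $C_{p^2}$ multipliers of items (i),(j) arise --- are stated as expectations, not arguments. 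Since for most of the sixty-odd groups the ``fallback'' (Hopf's formula or epicenter computations, group by group) would constitute essentially the entire proof, what you have is a plan with the hard part deferred, not a proof; within this paper, the correct justification of the statement is simply the citation.
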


Let $S$ denote the set $\{\Phi_3(221)b_r,\Phi_4(221)b, \Phi_4(221)d_{\frac{1}{2}(p-1)}, 
\Phi_4(221)f_0,   \Phi_{6}(1^5)\}$ consisting of some groups of order $p^5$. It follows by \cite[Lemma 1.2]{MS} that finite abelian groups form singleton equivalence classes w.r.t the relation $\sim_\mathbb C$. Hence, in the next result, we answer the TGRIP for the non-abelian groups of order $p^5$ which do not belong to the set $S$.

 \begin{thm}\label{groupsorderp5}
The non-abelian groups of order $p^5$, where $p \geq 5$ is prime, with the exceptions listed in $S$ consist of the following equivalence classes w.r.t the relation $\sim_\mathbb C$.
	\begin{enumerate}[(i)]
			\item $\{\Phi_2(221)b,  \Phi_2(311)a, \Phi_2(311)b, \Phi_2(311)c\}$, 
					\item  $\{\Phi_2(32)a_1, \Phi_2(32)a_2\}$, 
\item  $\{\Phi_2(41)\}$,
\item  $ \{\Phi_2(2111)a,\Phi_2(2111)b\}$,
\item  $ \{\Phi_2(2111)c,\Phi_2(2111)d\}$, 
\item  $\{\Phi_2(221)a, \Phi_2(221)d, \}$, 
\item $\{\Phi_2(221)c\}$,
		\item  $ \{	\Phi_2(1^5)\}$,
		\item $\{ \Phi_3(311)a, \Phi_3(311)b_r,  \Phi_3(221)a,  \Phi_4(221)a, \Phi_4(221)c, \Phi_4(221)d_r  (r \neq \frac{1}{2}(p-1)),  \Phi_4(221)e, \\
  \Phi_4(221)f_r\},$
		\item $\{ \Phi_3(2111)a, \Phi_3(2111)b_r, \Phi_3(2111)c,  \Phi_4(2111)a,  \Phi_4(2111)b,  \Phi_4(2111)c\}$,
%		\item $\{\Phi_3(221)b_r,\Phi_4(221)b\}$ capable not checked yet.
%		\item  $\{\Phi_3(2111)d,\Phi_3(2111)e\}$ Check it,
\item $\{\Phi_3(2111)d, \Phi_3(2111)e\}$,
		\item $\{\Phi_3(1^5)\}$,
%		\item $\{\Phi_4(221)d_{\frac{1}{2}(p-1)},  \Phi_4(221)f_0\}$, check it   
				\item $\{\Phi_4(1^5)\}$,
		\item $\{\Phi_5(2111),\Phi_5(1^5)\}$,
		\item  $\{ \Phi_6(221)a, \Phi_6(221)b_r  (r \neq \frac{1}{2}(p-1)),  \Phi_6(221)c_r, \Phi_6(221)d_r\}$,
		\item $\{  \Phi_6(2111)a,  \Phi_6(2111)b_r,  \Phi_9(2111)a, \Phi_9(2111)b_r\}$,  
  \item $\{\Phi_6(221)b_{\frac{1}{2}(p-1)},  \Phi_6(221)d_0\}$,
\item   $\{ \Phi_7(2111)a, \Phi_7(2111)b_r,  \Phi_{7}(2111)c\}$,
\item $\{\Phi_7 (1^5)\}$, 
\item  $\{ \Phi_8(32)\}$,
\item  $\{\Phi_{9}(1^5)\}$, 
\item $\{ \Phi_{10}(2111)a_r, \Phi_{10}(2111)b_r\}$,
\item  $\{\Phi_{10}(1^5)\}$.
\end{enumerate}
\end{thm}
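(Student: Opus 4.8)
The plan is to prove the stated partition by establishing both directions for each pair of groups: groups listed together are related by $\sim_\mathbb{C}$, and groups in distinct classes are not. The coarsest invariant is the Schur multiplier: since $G \sim_\mathbb{C} H$ forces an isomorphism $\Ho^2(G,\mathbb{C}^\times) \cong \Ho^2(H,\mathbb{C}^\times)$, Theorem \ref{Schurp5} already sorts the groups into the blocks (a)--(j), and no two groups from different blocks can be equivalent. I would first record this reduction, so that it remains only to analyse each Schur-multiplier block internally and see that it splits into the asserted classes.

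Within a block the strategy is to compute the full twisted profile of each group, namely the multiset of Wedderburn decompositions $\{\mathbb{C}^\alpha G : [\alpha] \in \Ho^2(G,\mathbb{C}^\times)\}$, and to decide when an isomorphism of Schur multipliers can be chosen matching these decompositions class by class. For the trivial cocycle this is the ordinary complex group algebra $\mathbb{C}G$, whose decomposition is read off from the character degrees; Bioch's theorem (isoclinic groups share $\mathbb{C}G$, as used in the proof of Theorem \ref{application1}) handles many coincidences at once, while the Clifford theory recalled at the start of the section -- inertia groups and induction from an abelian normal subgroup -- computes the degrees in the remaining cases and distinguishes the isoclinism families $\Phi_2,\Phi_3,\dots$ sitting inside a block. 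For the nontrivial cocycles the main engine is Theorem \ref{centraltype}: for each group I would exhibit a central $Z \subseteq Z^*(G)\cap G'$ making the transgression--inflation sequence exact (Theorem \ref{capable}) with $G/Z$ not of central type and $|G/Z|\ge p^2$; then every nontrivial $[\alpha]$ gives $\mathbb{C}^\alpha G \cong \bigoplus_{i=1}^{p^3}\mathbb{C}^{p\times p}$, so the nontrivial part of the profile is constant and equivalence inside the block reduces to the single condition $\mathbb{C}G\cong \mathbb{C}H$.

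I would then build the equivalences block by block. In the blocks with trivial or cyclic $\mathbb{Z}/p$ multiplier, once $\mathbb{C}G\cong\mathbb{C}H$ is verified the relation $\sim_\mathbb{C}$ follows by sending the trivial class to the trivial class and any generator to any generator; this accounts for classes (ii), (iii), (ix), (xv)--(xvii), (xx), (xxiii), and in particular explains the cross-isoclinism coincidences, such as the $\Phi_3$--$\Phi_4$ amalgamation in (ix) and the $\Phi_6$--$\Phi_9$ amalgamation in (xvi), where $\mathbb{C}G\cong\mathbb{C}H$ holds even though the groups are not isoclinic. For (xvi) I would instead invoke Corollary \ref{result1}, assembling the commutative square of Diagram $1$ from explicit sections and transgression formulas exactly as in the proofs of Theorem \ref{application1} and Theorem \ref{example2}. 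For the higher-rank blocks $(\mathbb{Z}/p)^k$ with $k\ge 3$ and the non-elementary blocks (i), (j), Theorem \ref{centraltype} no longer forces a single nontrivial value, so I would refine the analysis via representation groups: realise $\Ho^2(G,\mathbb{C}^\times)$ by a covering group $\tilde G$, translate each $\mathbb{C}^\alpha G$ into the block $\irr(\tilde G\mid\chi)$ lying over the corresponding $\chi\in\Hom(Z,\mathbb{C}^\times)$ through Theorem \ref{proj-ord}, and then apply Lemma \ref{bijection_ord} to produce a dimension-preserving matching for a suitable isomorphism of the $\Hom$-groups.

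The main obstacle I anticipate is exactly this last point: for the larger Schur multipliers the fibres of $\inf$, and hence the Wedderburn decompositions, genuinely vary with $[\alpha]$, so one must track how the degrees are distributed over the whole cohomology group and verify that one multiplier isomorphism realises the matching simultaneously for every class. This bookkeeping -- both distinguishing groups with identical Schur multiplier and identical $\mathbb{C}G$ but different twisted profiles, and conversely exhibiting the matching isomorphism when the profiles agree -- is where the five groups of $S$ resist the uniform argument: for these the relevant proper quotients turn out to be of central type, so Theorem \ref{centraltype} fails, a simple (large) twisted block appears in the profile, and the clean reduction to $\mathbb{C}G$ is no longer available, which is why they are set aside.
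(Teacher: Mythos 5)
Your overall skeleton is the paper's: sort first by Schur multiplier (Theorem \ref{Schurp5}) and by isomorphism of complex group algebras (isoclinism plus the $\Phi_3$--$\Phi_4$, $\Phi_6$--$\Phi_9$, $\Phi_7$--$\Phi_8$ coincidences), use Theorem \ref{centraltype} where its hypotheses hold, and use representation groups with Theorem \ref{proj-ord} and Lemma \ref{bijection_ord} elsewhere. But there is a genuine gap in the step you rely on for the cyclic-multiplier block: the claim that once $\Ho^2(G,\mathbb C^\times)\cong\Ho^2(H,\mathbb C^\times)\cong \mathbb Z/p\mathbb Z$ and $\mathbb C G\cong \mathbb C H$ are verified, $G\sim_{\mathbb C}H$ follows by ``sending any generator to any generator.'' This is false for groups of order $p^5$, and its failure is precisely what the theorem's partition records. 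All six groups in classes $(xvi)$ and $(xvii)$ have Schur multiplier $\mathbb Z/p\mathbb Z$ and lie in $\Phi_6\cup\Phi_9$, hence all have isomorphic complex group algebras; nevertheless $(xvi)$ and $(xvii)$ are distinct equivalence classes. The paper's closing Remark makes this explicit with $G=\Phi_6(2111)a$ and $H=\Phi_6(221)d_0$: these satisfy all three of your conditions yet $G\nsim_{\mathbb C}H$. Your reduction would merge $(xvi)$ and $(xvii)$ (and would also shortcut $(ii)$ illegitimately), producing a wrong partition. The root cause is that Theorem \ref{centraltype} is simply unavailable for these groups: they admit irreducible projective representations of degree $p^2$, so no central $Z$ exists with the exact sequence and $G/Z$ not of central type, and the nontrivial twisted algebras are not $\bigoplus_{i=1}^{p^3}\mathbb C^{p\times p}$. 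The paper instead determines them by Clifford theory on explicit covering groups: for the groups in $(xvi)$ one finds $\bigoplus_{i=1}^{p^2}\mathbb C^{p\times p}\oplus\bigoplus_{i=1}^{p-1}\mathbb C^{p^2\times p^2}$, while for those in $(xvii)$ and $(ii)$ one finds $\bigoplus_{i=1}^{p}\mathbb C^{p^2\times p^2}$; comparing these two profiles is exactly how $(xvi)$ and $(xvii)$ are separated. The same caveat applies to the $C_p\times C_p$ block: the groups of $(xi)$ also have degree-$p^2$ projective representations, so they too need the covering-group computation rather than Theorem \ref{centraltype}.

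Relatedly, your diagnosis of the exceptional set $S$ is off. You attribute its exclusion to the presence of central-type quotients defeating the reduction to $\mathbb C G$; but that reduction is equally defeated for $(ii)$, $(xi)$, $(xvi)$ and $(xvii)$, which the paper nonetheless resolves by exhibiting representation groups (for instance $\Phi_{14}(42)$ and $\Phi_8(33)$ for class $(ii)$, and the groups $\Phi_{(42,1)}$, $G_{(43,2r)}$ of \cite{newman} for class $(xvii)$) and running the inertia-group computations. The actual obstruction for $S$, as stated in the paper, is that no representation groups could be found for those five groups with which to carry out this computation. To repair your argument you must, for every cyclic-multiplier group admitting degree-$p^2$ projective representations, replace the $\mathbb C G$-reduction by an explicit determination of the full twisted profile via a covering group, Theorem \ref{proj-ord} and Lemma \ref{bijection_ord}, exactly as the paper does in cases $(ii)$, $(vi)$, $(xi)$, $(xvi)$ and $(xvii)$.
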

\noindent \begin{proof}
First, we separate the isoclinism classes of the groups into several equivalence classes  w.r.t $\sim_\mathbb C$ by observing the following facts:\\
(a) Isomorphism of complex group algebras;\\
(b) Schur multiplier.\\
Note that the Schur multiplier of the groups of order $p^5$, where $p \geq 5$ is prime, has been described in Theorem \ref{Schurp5}. 
One can check that the groups in each list given in $(i)-(xxiii)$ have isomorphic Schur multiplier.
Given  two non-abelian groups $G,H$ of order $p^5$, where $p \geq 5$ is a prime, it follows from  \cite[Theorem 2.2]{Bioch}, \cite[Table 4.1]{James} and   \cite{PDG} that, the complex group algebra $\mathbb{C}G$ is isomorphic to $\mathbb{C}H$ if and only if one of the following holds:\\
(a) $G, H$ are in the same isoclinism class.\\
(b)  $G \in \Phi_{3}$ and $H \in \Phi_{4}$.\\
(c)  $G \in \Phi_6$ and $ H\in \Phi_9$.\\
(d) $G \in \Phi_{7}$ and $H \in \Phi_{8}$.\\
We  use the above facts to obtain the equivalence classes w.r.t $\sim_\mathbb C$.
For example,  one can see that the groups $\Phi_2(41), \Phi_2(221)c,	\Phi_2(1^5), $
$\Phi_3(1^5), \Phi_4(1^5), \Phi_7 (1^5),$ $ \Phi_8(32), \Phi_{9}(1^5),  \Phi_{10}(1^5)$ form singleton equivalence classes w.r.t $\sim_\mathbb C$.

Further, in our proof, we separate some of the equivalence classes 
 by observing the degrees of the ordinary irreducible representations of representation group of $G$. For example, $(xvi)$ and $(xvii)$ have been separated using this fact. Remaining equivalence classes have been separated by using Corollary \ref{result1} and some other facts. \\

\noindent $(i)$ By Theorem \ref{Schurp5}, $\Ho^2(G, \mathbb C^\times) \cong C_p \times  C_p$ for all the groups $G$ listed in $(i)$.  We claim that in this case, 
$\mathbb C^\alpha G \cong \oplus_{i=1}^{p^3} \mathbb C^{p\times p}$
for every non-trivial cocycle $\alpha$ of $G$.

Let $G=\Phi_2(221)b$. Then by \cite[Table 3]{groupsp5} and Theorem \ref{capable}, we have the following exact sequence
\[
1 \to \Hom(G', \mathbb C^\times)  \xrightarrow{\mathrm{tra}} \Ho^2(G/G', \mathbb C^\times)  \xrightarrow{\mathrm{inf}} \Ho^2(G, \mathbb C^\times) \to 1.
\]
Since $G/G'$ is not of central type, by Theorem \ref{centraltype}, $\mathbb C^\alpha G \cong \oplus_{i=1}^{p^3} \mathbb C^{p\times p}$
for every non-trivial cocycle $\alpha$ of $G$. 

Suppose $G=\Phi_2(311)a$. Consider the following group $$
\tilde{G}=\langle \alpha, \alpha_1, \beta,\beta_i \mid [\alpha_1,\alpha]=\alpha^{p^2}, [\beta,\alpha]=\beta_1, [\beta,\alpha_1]=\beta_2, \alpha^{p^3}=\alpha_1^{p}=\beta^p=\beta_i^p=1, i=1,2 \rangle.
$$
Observe that $\tilde{G}=\langle \beta,  \beta_1,\beta_2  \rangle \rtimes \langle \alpha_1, \alpha \mid  [\alpha_1,\alpha]=\alpha^{p^2}, \alpha^{p^3}=\alpha_1^{p}=1\rangle $ is of order $p^7$.  It is easy to see that $\tilde{G}$
is a representation group of $G$ and $\tilde{G}/\langle \beta_1,\beta_2\rangle \cong G$. Consider the abelian normal subgroup $N=\langle \beta,\beta_1,\beta_2,\alpha^p\rangle$ of $\tilde{G}$ of order $p^5$.
Let $\chi' \in \mathrm{Irr}(\langle \beta_1,\beta_2 \rangle)$ such that $\chi'(\beta_1)=\xi^i, \chi'(\beta_2)=\xi^k$ where $\xi$ is a $p$-th root of unity and $0\leq i,k \leq (p-1)$.  Let $\chi \in \mathrm{Irr}(N)$ such that $\chi |_{\langle \beta_1,\beta_2 \rangle }={\chi'}$.
By Theorem \ref{proj-ord}, it is enough to understand $\irr(\tilde{G}\mid \chi')$. So we use  Clifford theory here.
Now  $g\in \tilde{G}$ can be written as $g=\alpha^m\alpha_1^nn'$, for some $n' \in N$, $0\leq m,n\leq p-1$, and  every $n_1 \in N$ can be written as $n_1=\beta^xh$, for some $h \in Z(\tilde{G})$. Therefore,
\begin{flalign}
	\chi^{\alpha^m\alpha_1^n}(\beta^xh)=\chi(\beta^xh)\nonumber
& \iff \chi(\alpha_1^{-n}\alpha^{-m}\beta^xh\alpha^m\alpha_1^n)=\chi(\beta^xh)\nonumber\\
& \iff \chi(\alpha_1^{-n}\alpha^{-m}\beta^x\alpha^m\alpha_1^n)=\chi(\beta^x).\nonumber
\end{flalign} 
Since
\begin{flalign}
	\alpha_1^{-n}\alpha^{-m}\beta^x\alpha^m\alpha_1^n &=\alpha_1^{-n}(\alpha^{-m}\beta^x)\alpha^m\alpha_1^n\nonumber\\
	& =\alpha_1^{-n}(\beta^x\alpha^{-m}\beta_1^{mx})\alpha^m\alpha_1^n\nonumber\\
	& =\alpha_1^{-n}(\beta^x\alpha_1^n)\beta_1^{mx}\nonumber\\
	& =\alpha_1^{-n}(\alpha_1^n\beta^x\beta_2^{nx})\beta_1^{mx}\nonumber\\
	& =\beta^x\beta_1^{mx}\beta_2^{nx},\nonumber
\end{flalign}
we obtain that $\alpha^m\alpha_1^nn' \in I_{\tilde{G}}(\chi)$ if and only if $\chi(\beta_1^{mx}\beta_2^{nx})=1$, i.e.,  $\xi^{(im+kn)x}=1$. 

If $\chi'$ is trivial, then  $\irr(\tilde{G} \mid \chi')$ are $1$ or $p$-dimensional.
Suppose $\chi'$ is non-trivial. Then the following cases occur:\\
\textbf{Case (a) } $i\neq 0, k \neq 0$, then $|I_{\tilde{G}}(\chi)|=|\langle N, \alpha\alpha_1^{-ik^{-1}} \rangle|=p^6$; \\
\textbf{Case (b)} $i= 0, k \neq 0$, then $|I_{\tilde{G}}(\chi)|=|\langle N, \alpha \rangle|=p^6$; \\
\textbf{Case (c)} $i\neq 0, k= 0$, then $|I_{\tilde{G}}(\chi)|=|\langle N, \alpha_1 \rangle|=p^6$.\\
Thus $\irr(\tilde{G} \mid \chi')$ are $p$-dimensional if $\chi'$ is non-trivial.
Hence  $$\mathbb C \tilde{G} \cong  \oplus_{i=1}^{p^4} \mathbb C \oplus_{j=1}^{(p^5-p^2)} \mathbb C^{p\times p}$$
and by Theorem  \ref{proj-ord},  $\mathbb C^\alpha G \cong \oplus_{i=1}^{p^3} \mathbb C^{p\times p}$
for each non-trivial cocycle $\alpha$ of $G$. 
\\

Now consider the groups $H_1=\Phi_2(311)b$ and $H_2=\Phi_2(311)c$. Then 
\[
\tilde{H_1}=\langle \alpha,\alpha_1,\gamma,\beta_1,\beta_2 \mid [\alpha_1,\alpha]=\gamma^{p^2},[\gamma,\alpha_1]=\beta_1,[\gamma,\alpha]=\beta_2,\gamma^{p^3}=\alpha^p=\alpha_1^p=\beta_1^p=\beta_2^p=1\rangle\]
and 
\[
\tilde{H_2}=\langle \alpha,\alpha_1,\alpha_2, \beta_1,\beta_2 \mid [\alpha_1,\alpha]=\alpha_2,[\alpha_2,\alpha]=\beta_1,[\alpha_2,\alpha_1]=\beta_2,\alpha^{p^3}=\alpha_1^p=\alpha_2^p=\beta_1^P=\beta_2^p=1\rangle
\]
are representation groups of $H_1$ and $H_2$ respectively. Proceeding on the similar lines, as described above, one can see that
$$\mathbb C \tilde{H_i} \cong  \oplus_{i=1}^{p^4} \mathbb C \oplus_{j=1}^{(p^5-p^2)} \mathbb C^{p\times p}, i=1,2.$$
Hence the result follows.\\

$(ii)$ 
Consider the group $G_1=\Phi_2(32)a_1=\langle\alpha,\alpha_1,\alpha_2 \mid [\alpha_1,\alpha]=\alpha^{p^2}=\alpha_2,\alpha_1^{p^2}=\alpha_2^p=1\rangle$.\\
Then 
$$\tilde{G}_1=\Phi_{14}(42)=\langle\alpha_1,\alpha_2,\beta\mid [\alpha_1,\alpha_2]=\beta,\alpha_1^{p^2}=\beta,\alpha_2^{p^2}=\beta^{p^2}=1\rangle$$
is a representation group of $G_1$ of order $p^6$ and $\tilde{G}_1/\langle \beta^p \rangle \cong G_1$. Now consider the abelian normal subgroup $N=\langle \alpha_1^p,\alpha_2^p \rangle$ of $\tilde{G}_1$ of order $p^4$. Take $A_1=\langle \alpha_1^{p^3} \rangle$.
Let  $\chi'\in \mathrm{Irr}(\langle \alpha_1^{p^3} \rangle)$ such that $\chi'(\alpha_1^{p^3})=\xi^i$, where $\xi$ is a $p^{th}$ root of unity, and $\chi\in \mathrm{Irr}(N)$ such that $\chi |_{\langle \alpha_1^{p^3} \rangle}={\chi'}$. By Theorem \ref{proj-ord}, it is enough to understand $\irr(\tilde{G}_1\mid \chi')$. So, we use Clifford theory here.
Now $g \in \tilde{G}_1$ can be written as $g=\alpha_1^m\alpha_2^nn'$, for some $n' \in N$, and any $n_1 \in N$ can be written as $n_1=\alpha_1^{pk}\alpha_2^{pl}$ for $0 \leq k \leq p^3-1$, $0 \leq l \leq p-1$. Now
\begin{flalign}
	\chi^{\alpha_1^m\alpha_2^n}(\alpha_1^{pk}\alpha_2^{pl})=\chi(\alpha_1^{pk}\alpha_2^{pl}) 
	& \iff \chi(\alpha_2^{-n}\alpha_1^{-m}\alpha_1^{pk}\alpha_2^{pl}\alpha_1^m\alpha_2^n)=\chi(\alpha_1^{pk}\alpha_2^{pl}) \nonumber.
\end{flalign}
Since
\begin{flalign}
	\alpha_2^{-n}\alpha_1^{-m}\alpha_1^{pk}\alpha_2^{pl}\alpha_1^m\alpha_2^n
	& =\alpha_2^{-n}\alpha_1^{-m}\alpha_1^{pk}(\alpha_2^{pl}\alpha_1^m)\alpha_2^n\nonumber\\
	& =\alpha_2^{-n}\alpha_1^{-m}\alpha_1^{pk}(\alpha_1^m\alpha_2^{pl}\alpha_1^{-p^3ml})\alpha_2^n\nonumber\\
	& =(\alpha_2^{-n}\alpha_1^{pk})\alpha_2^{pl}\alpha_1^{-p^3ml}\alpha_2^n\nonumber\\
	& =(\alpha_1^{pk}\alpha_2^{-n}\alpha_1^{p^3kn})\alpha_2^{pl}\alpha_1^{-p^3ml}\alpha_2^n\nonumber\\
	& =\alpha_1^{pk}\alpha_2^{pl}\alpha_1^{p^3(kn-ml)},\nonumber
\end{flalign}
we obtain that $\alpha_1^m\alpha_2^nn' \in I_{\tilde{G}_1}(\chi)$ if and only if $\chi(\alpha_1^{p^3(kn-ml)})=1$, i.e., $\xi^{i(kn-ml)}=1$. Now the following cases occur:

\textbf{Case (a)} If $i=0$, then $I_{\tilde{G_1}}(\chi)=\tilde{G_1}$. Number of such $\chi$ is $p^3$. 
%Among these $p^2$ many extends to $\tilde{G_1}$ and $(p^3-p^2)$ many extends to $\langle \alpha_1^p,\alpha_2 \rangle$.
There are $p^4$ many one dimensional and $(p^3-p^2)$ many $p$-dimensional irreducible representations.

\textbf{Case (b) :} If $i\neq 0$, then $I_{\tilde{G_1}}(\chi)= N$ is of order $p^6$. There are $(p^4-p^3)$ many such $\chi$. In this case,
there are $(p^2-p)$ many $p^2$-dimensional irreducible representations.

Thus, we have 
$$\mathbb C \tilde{G}_1 \cong \oplus_{i=1}^{p^4} \mathbb C \oplus_{i=1}^{p^3-p^2} \mathbb C^{p\times p}\oplus_{i=1}^{p^2-p} \mathbb C^{p^2\times p^2}.$$
\\

Now consider the group $G_2=\Phi_2(32)a_2=\langle\alpha, \alpha_1,\alpha_2 \mid [\alpha_1,\alpha]=\alpha_1^p=\alpha_2,\alpha^{p^3}=\alpha_2^p=1\rangle$.\\
Let $\tilde{G}_2=\Phi_8(33)=\langle \alpha_1,\alpha_2,\beta \mid [\alpha_1,\alpha_2]=\beta=\alpha_1^p,[\beta,\alpha_2]=\beta^p,\alpha_2^{p^3}=\beta^{p^2}=1\rangle$. So $\tilde{G}_2$ is a group of order $p^6$. Then $\tilde{G}_2$ is a representation group of $G$ and $\tilde{G}_2/\langle \beta^p \rangle \cong G_2$. Now consider the abelian normal subgroup $N=\langle \beta=\alpha_1^p,\alpha_2^p \rangle$ of $\tilde{G}_2$ of order $p^4$. Take $A_2=\langle \alpha_1^{p^2} \rangle$.
Let $\chi'\in \mathrm{Irr}(\langle \alpha_1^{p^2} \rangle)$ such that $\chi'(\alpha_1^{p^2})=\xi^i$, where $\xi$ is a $p$-th root of unity.
Let $\chi \in \mathrm{Irr}(N)$ such that $\chi |_{\langle \beta^p \rangle }={\chi'}$.
Now 
\begin{flalign}
	\chi^{\alpha_1^m\alpha_2^n}(\alpha_1^{pk}\alpha_2^{pl})=\chi(\alpha_1^{pk}\alpha_2^{pl}) 
	& \iff \chi(\alpha_2^{-n}\alpha_1^{-m}\alpha_1^{pk}\alpha_2^{pl}\alpha_1^m\alpha_2^n)=\chi(\alpha_1^{pk}\alpha_2^{pl}) \nonumber.
\end{flalign}
Since
\begin{flalign}
	\alpha_2^{-n}\alpha_1^{-m}\alpha_1^{pk}\alpha_2^{pl}\alpha_1^m\alpha_2^n
%	& =\alpha_2^{-n}\alpha_1^{-m}\alpha_1^{pk}(\alpha_2^{pl}\alpha_1^m)\alpha_2^n\nonumber\\
%	& =\alpha_2^{-n}\alpha_1^{-m}\alpha_1^{pk}(\alpha_1^m\alpha_2^{pl}\alpha_1^{-p^2ml})\alpha_2^n\nonumber\\
%	& =(\alpha_2^{-n}\alpha_1^{pk})\alpha_2^{pl}\alpha_1^{-p^2ml}\alpha_2^n\nonumber\\
%	& =(\alpha_1^{pk}\alpha_2^{-n}\alpha_1^{p^2kn})\alpha_2^{pl}\alpha_1^{-p^2ml}\alpha_2^n\nonumber\\
    & =\alpha_1^{pk}\alpha_2^{pl}\alpha_1^{p^2(kn-ml)},\nonumber
\end{flalign}
we obtain that, $\alpha_1^m\alpha_2^nn' \in I_{\tilde{G}_2}(\chi)$ if and only if $\chi(\alpha_1^{p^2(kn-ml)})=1$, i.e.,  $\xi^{i(kn-ml)}=1$.
Similarly, as above one can see that
$$\mathbb C \tilde{G}_2 \cong \oplus_{i=1}^{p^4} \mathbb C \oplus_{i=1}^{p^3-p^2} \mathbb C^{p\times p}\oplus_{i=1}^{p^2-p} \mathbb C^{p^2\times p^2},$$
and  by Theorem  \ref{proj-ord},  for each non-trivial cocycle $\alpha$ of $G_i$,
$$\mathbb C^{\alpha} G_i \cong  \oplus_{i=1}^{p} \mathbb C^{p^2\times p^2}.$$
Define  an isomorphism $\sigma: \Hom(A_1, \mathbb C^\times) \to \Hom(A_2, \mathbb C^\times)$ on the generators by
$$\sigma(\chi) (\alpha_1^{p^2})=\chi(\alpha_1^{p^3}).$$  Then, there is a dimension preserving bijective correspondence between the sets 
$\irr(\tilde{G}_1\mid \chi)$ and $\irr(\tilde{G}_2\mid \sigma(\chi))$. Hence, $G_1 \sim_{\mathbb C} G_2$, follows from Lemma \ref{bijection_ord}.
\\

$(v)$ Result follows from  \cite[Theorem 1.3]{PGS}.
\\

$(vi)$ Consider the group $$G_1=\Phi_2(221)a=\langle\alpha,\alpha_1\mid [\alpha_1,\alpha]=\alpha^p,\alpha_1^{p^2}=\alpha^{p^2}=1\rangle\times C_p.$$
Consider 
\begin{eqnarray*}
\tilde{G}_1 &=& \langle\alpha_1,\alpha_2,\beta',\beta_1,\beta_2\mid [\alpha_1,\alpha_2]=\alpha_1^p=\beta,[\beta,\alpha_2]=\beta^p,[\beta',\alpha_1]=\beta_1,[\beta',\alpha_2]=\beta_2,\\
&& \beta^{p^2}=\alpha_2^{p^2}=1,\beta'^p=\beta_1^p=\beta_2^p=1\rangle \\
&=& \langle \beta',\beta_1,\beta_2 \rangle \rtimes \langle\alpha_1,\alpha_2 \mid [\alpha_1,\alpha_2]=\alpha_1^p=\beta,[\beta,\alpha_2]=\beta^p,\beta^{p^2}=\alpha_2^{p^2}=1 \rangle\\
&=& (C_p)^3\rtimes \Phi_8(32).
\end{eqnarray*}
Then $\tilde{G}_1$ is a representation group of $G_1$ of order $p^8$ and $\tilde{G}_1/A_1 \cong G_1$ for $A_1=\langle \beta^p, \beta_1,\beta_2 \rangle$. 
Consider the abelian normal subgroup $N=\langle \beta',\beta,\alpha_2^p,\beta_1,\beta_2\rangle$ of $\tilde{G}_1$ of order $p^6$.
Let $\chi' \in \mathrm{Irr}(A_1)$ such that $\chi'(\beta^p)=\xi^{i_1}, \chi'(\beta_1)=\xi^{i_2}, \chi'(\beta_2)=\xi^{i_3}$ where $\xi$ is a  $p^{th}$ root of unity and $0\leq i_1,i_2,i_3 \leq (p-1)$.  Let $\chi \in \mathrm{Irr}(N)$ such that $\chi |_{\langle\beta^p,\beta_1,\beta_2 \rangle }={\chi'}$.
By Theorem \ref{proj-ord}, it is enough to understand $\irr(\tilde{G}_1\mid \chi')$. So we use Clifford theory here.
Now $g \in \tilde{G}_1$ can be written as $g=\alpha_1^m\alpha_2^nn'$ for some $n' \in N$ and any element of $N$ is of the form $n_1=\beta'^l\beta^k\alpha_2^{ps}h$  for some $h \in Z(\tilde{G}_1)$. Therefore,
\begin{flalign}
	\chi^{\alpha_1^m\alpha_2^n}(\beta'^l\beta^k\alpha_2^{ps}h)=\chi(\beta'^l\beta^k\alpha_2^{ps}h) 
%	& \iff \chi(\alpha_2^{-n}\alpha_1^{-m}\beta'^l\beta^k\alpha_2^{ps}h\alpha_1^m\alpha_2^n)=\chi(\beta'^l\beta^k\alpha_2^{ps}h) \nonumber\\
	& \iff \chi(\alpha_2^{-n}\alpha_1^{-m}\beta'^l\beta^k\alpha_2^{ps}\alpha_1^m\alpha_2^n)=\chi(\beta'^l\beta^k\alpha_2^{ps}). \nonumber
\end{flalign}
Since
\begin{flalign}
\alpha_2^{-n}\alpha_1^{-m}\beta'^l\beta^k\alpha_2^{ps}\alpha_1^m\alpha_2^n
 & =\alpha_2^{-n}(\alpha_1^{-m}\beta'^l)\beta^k\alpha_2^{ps}\alpha_1^m\alpha_2^n\nonumber\\
 & =\alpha_2^{-n}(\beta'^l\alpha_1^{-m}\beta_1^{lm})\beta^k\alpha_2^{ps}\alpha_1^m\alpha_2^n\nonumber\\
 & =\alpha_2^{-n}\beta'^l\beta^k(\alpha_1^{-m}\alpha_2^{ps})\alpha_1^m\alpha_2^n\beta_1^{lm}\nonumber\\
 & =(\alpha_2^{-n}\beta'^l)\beta^k \beta^{-mps}\alpha_2^{ps}\alpha_2^n\beta_1^{lm}\nonumber\\
 & =\beta'^l(\alpha_2^{-n}\beta^k)\beta^{-mps}\alpha_2^{ps}\alpha_2^n\beta_1^{lm}\beta_2^{nl}\nonumber\\
 & =\beta'^l(\beta^{knp}\beta^k\alpha_2^{-n})\beta^{-mps}\alpha_2^{ps}\alpha_2^n\beta_1^{lm}\beta_2^{nl}\nonumber\\
 & =\beta'^l\beta^{k}\alpha_2^{ps}\beta^{p(kn-ms)}\beta_1^{lm}\beta_2^{nl},\nonumber
\end{flalign}
we obtain that, $\alpha_1^m\alpha_2^nn' \in I_{\tilde{G}_1}(\chi)$ if and only if $\chi(\beta^{p(kn-ms)}\beta_1^{lm}\beta_2^{nl})=1$, i.e.,  \\
$\xi^{i_1(kn-ms)+i_2lm+i_3nl}=1.$
Now the following cases occur.

\textbf{Case (a)} If $i_j=0$ for $j=1, 2, 3$, then $I_{\tilde{G}_1}(\chi)=\tilde{G}_1$. There are $p^3$
such characters $\chi$ of $N$. 
In this case, there are $p^4$ many one dimensional and $(p^3-p^2)$ many $p$-dimensional irreducible  representations  of $\tilde{G}_1$.

\textbf{Case (b)} Suppose  $i_1$ is non-zero. Taking $k=l=0,s=1$, we have $\xi^{-i_1m}=1$ implies $m=0$. Taking $s=l=0,k=1$, we have $\xi^{i_1n}=1$ implies $n=0$.  So $I_{\tilde{G}_1}(\chi)=N$.
There are $p^4(p^2-p)$ such characters $\chi$ of $N$. 
There are $(p^4-p^3)$ many $p^2$-dimensional irreducible  representations.

\textbf{Case (c) :} Suppose $i_1 = 0$ and  $i_3\neq 0$. Then $I_{\tilde{G}_1}(\chi)=\langle N,  \alpha_1\alpha_2^{--{i_2}{i_3}^{-1}}\rangle$ is of order $p^7$. There are $p^4(p-1)$ such characters $\chi$ of $N$. 
There are $(p^5-p^4)$ many $p$-dimensional irreducible  representations.

\textbf{Case (d) :} Suppose $i_1 = 0$ and  $i_2 \neq 0, i_3=0$. Then $I_{\tilde{G}_1}(\chi)=\langle N, \alpha_2\rangle$ is of order $p^7$. There are $(p^4-p^3)$ such characters $\chi$ of $N$. 
There are $(p^4-p^3)$ many $p$-dimensional irreducible  representations.
\\

Therefore, we have 
$$\mathbb C \tilde{G}_1 \cong  \oplus_{i=1}^{p^4} \mathbb C \oplus_{i=1}^{(p^5-p^2)} \mathbb C^{p\times p} \oplus_{i=1}^{(p^4-p^3)} \mathbb C^{p^2\times p^2}.$$

Now consider the group $$G_2=\Phi_2(221)d=\langle \alpha,\alpha_1,\alpha_2 \mid [\alpha_1,\alpha]=\alpha_2,\alpha^{p^2}=\alpha_1^{p^2}=\alpha_2^p=1 \rangle.
$$ Let 
$\tilde{G}_2=\langle\alpha,\alpha_1,\alpha_2,\alpha_3,\alpha_4 \mid [\alpha_1,\alpha]=\alpha_2,[\alpha_2,\alpha]=\alpha_3,[\alpha_2,\alpha_1]=\alpha_4,\alpha^{p^2}=\alpha_1^{p^2}=\alpha_2^{p^2}=\alpha_3^p=\alpha_4^p=1 \rangle.$
Then $\tilde{G}_2$ is a representation group of $G_2$ of order $p^8$ and $\tilde{G}_2/A_2 \cong G$ for $A_2=\langle \alpha_2^p, \alpha_3, \alpha_4 \rangle$. 
Consider the abelian normal subgroup $N=\langle\alpha^p,\alpha_1^p,\alpha_2,\alpha_3,\alpha_4\rangle$ of $\tilde{G}_1$ of order $p^6$.
Let $\chi' \in \mathrm{Irr}(A_2)$ such that $\chi'(\alpha_2^p)=\xi^{i_1}, \chi'(\alpha_3)=\xi^{i_2}, \chi'(\alpha_4)=\xi^{i_3}$ where $\xi$ is a $p^{th}$ root of unity and $0\leq i_1,i_2,i_3\leq (p-1)$.  Let $\chi \in \mathrm{Irr}(N)$ such that $\chi |_{\langle \alpha_2^p,\alpha_3,\alpha_4 \rangle }={\chi'}$.
Now 
\begin{flalign}
\chi^{\alpha^m\alpha_1^n}(\alpha^{pl}\alpha_1^{pk}\alpha_2^sh)=\chi(\alpha^{pl}\alpha_1^{pk}\alpha_2^sh)
% & \iff \chi(\alpha_1^{-n}\alpha^{-m}\alpha^{pl}\alpha_1^{pk}\alpha_2^sh\alpha^m\alpha_1^n)=\chi(\alpha^{pl}\alpha_1^{pk}\alpha_2^sh)\nonumber\\
& \iff \chi(\alpha_1^{-n}\alpha^{-m}\alpha^{pl}\alpha_1^{pk}\alpha_2^s\alpha^m\alpha_1^n)=\chi(\alpha^{pl}\alpha_1^{pk}\alpha_2^s).\nonumber
\end{flalign}
Now, it is easy to see that
\begin{flalign}
\alpha_1^{-n}\alpha^{-m}\alpha^{pl}\alpha_1^{pk}\alpha_2^s\alpha^m\alpha_1^n
% &= \alpha_1^{-n}\alpha^{pl-m}\alpha_1^{pk}(\alpha_2^s\alpha^m)\alpha_1^n\nonumber\\
% &=\alpha_1^{-n}\alpha^{pl-m}\alpha_1^{pk}(\alpha^m\alpha_2^s\alpha_3^{ms})\alpha_1^n\nonumber\\
% &=\alpha_1^{-n}\alpha^{pl-m}\alpha_1^{pk}\alpha^m(\alpha_2^s\alpha_1^n)\alpha_3^{ms}\nonumber\\
% &=\alpha_1^{-n}\alpha^{pl-m}(\alpha_1^{pk}\alpha^m)(\alpha_1^n\alpha_2^s\alpha_4^{ns})\alpha_3^{ms}\nonumber\\
% &=(\alpha_1^{-n}\alpha^{pl})\alpha_1^{pk+n}\alpha_2^{pmk+s}\alpha_3^{ms}\alpha_4^{ns}\nonumber\\
% &=(\alpha^{pl}\alpha_1^{-n}\alpha_2^{-pln})\alpha_1^{pk+n}\alpha_2^{pmk+s}\alpha_3^{ms}\alpha_4^{ns}\nonumber\\
&=\alpha^{pl}\alpha_1^{pk}\alpha_2^s\alpha_2^{p(mk-ln)}\alpha_3^{ms}\alpha_4^{ns},\nonumber
\end{flalign}
and so we obtain that, $\alpha^m\alpha_1^nn' \in I_{\tilde{G}_2}(\chi)$ if and only if $\chi(\alpha_2^{p(mk-ln)}\alpha_3^{ms}\alpha_4^{ns})=1$, i.e., \\
$\xi^{i_1(mk-ln)+i_2ms+i_3ns}=1$.

\textbf{Case (a)} If $i_j=0$ for $j=1, 2, 3$, then $I_{\tilde{G}_2}(\chi)=\tilde{G}_2$. 

\textbf{Case (b)} Suppose  $i_1$ is non-zero. Taking $s=l=0,k=1$, we have $\xi^{i_1m}=1$ implies $m=0$. Taking $s=k=0,l=1$, we have $\xi^{-i_1n}=1$ implies $n=0$.  So $I_{\tilde{G}_2}(\chi)=N$.

\textbf{Case (c) :} Suppose $i_1 = 0$ and  $i_3\neq 0$. Then $I_{\tilde{G}_2}(\chi)=\langle N,  \alpha_1\alpha_2^{-{i_2}{i_3}^{-1}}\rangle$ is of order $p^7$.

\textbf{Case (d) :} Suppose $i_1 = 0$ and  $i_2 \neq 0, i_3=0$. Then $I_{\tilde{G}_2}(\chi)=\langle N, \alpha_2\rangle$ is of order $p^7$.
\\

Similarly as described above, we have 
$$\mathbb C \tilde{G}_2 \cong  \oplus_{i=1}^{p^4} \mathbb C \oplus_{i=1}^{(p^5-p^2)} \mathbb C^{p\times p} \oplus_{i=1}^{(p^4-p^3)} \mathbb C^{p^2\times p^2}.$$
Define  an isomorphism $\sigma: \Hom(A_1, \mathbb C^\times) \to \Hom(A_2, \mathbb C^\times)$ on the generators by
$$\sigma(\chi) (\alpha_2^{p})=\chi(\beta^p), \sigma(\chi)(\alpha_3)=\chi(\beta_1), \sigma(\chi)(\alpha_4)=\chi(\beta_2).$$  Then, there is a dimension preserving bijective correspondence between the sets 
$\irr(\tilde{G}_1\mid \chi)$ and $\irr(\tilde{G}_2\mid \sigma(\chi))$. Hence, the result follows from Lemma \ref{bijection_ord}.
\\

$(ix)$ By \cite[Theorem 1]{centraltype}, the groups $C_{p^2} \times C_{p^2}, (C_p)^4,  \Phi_2(22), $ $\Phi_2(1^4), \Phi_3(1^4)$  are all the  groups of central type of order $p^4$.
It follows from Theorem \ref{Schurp5} that, for all the groups $G$ listed in $(ix)$, $\Ho^2(G,\mathbb C^\times)$  are isomorphic. 
Taking $Z=Z(G) \cap G'$, by \cite[Table 3]{groupsp5} and Theorem \ref{capable}, we have the following exact sequence 
$$1 \to \Hom(Z, \mathbb C^\times)  \xrightarrow{\mathrm{tra}} \Ho^2(G/Z, \mathbb C^\times)  \xrightarrow{\mathrm{inf}} \Ho^2(G, \mathbb C^\times) \to 1.$$
Since $G/Z$ is not of central type, the result follows from Theorem \ref{centraltype}.
\\

$(xi)$ Now consider the groups $G_1=\Phi_3(2111)d$ and $G_2=\Phi_3(2111)e$. Then the following groups 
 of order $p^7$ 
\[
\tilde{G_1}=\langle \alpha,\alpha_i, \alpha_4,\gamma~(i=1,2,3) \mid [\alpha_i,\alpha]=\alpha_{i+1},[\alpha_1, \alpha_2]=\gamma,\gamma^p=\alpha^{p^2}=\alpha_i^p=\alpha_4^p=1\rangle\]
and 
\[
\tilde{G_2}=\langle \alpha,\alpha_i, \alpha_4,\gamma~(i=1,2,3) \mid [\alpha_i,\alpha]=\alpha_{i+1},[\alpha_1, \alpha_2]=\gamma,\gamma^p=\alpha^p=\alpha_1^{p^2}=\alpha_{i+1}^p=1\rangle
\]
are representation groups of $G_1$ and $G_2$ respectively. Now proceeding on the same lines as proof of $(vi)$,
we have 
$$\mathbb C \tilde{G}_i \cong  \oplus_{i=1}^{p^3} \mathbb C \oplus_{i=1}^{(2p^4-p^3-p)} \mathbb C^{p\times p} \oplus_{i=1}^{(p^3-2p^2+p)} \mathbb C^{p^2\times p^2}.$$
\\

 $(xiv)$ Proof  follows from Theorem \ref{extra_special}.
\\

$(xv)$ Proof follows similarly as the proof of $(ix)$.
\\

$(xvi)$ Let $G= \Phi_9(2111)a$ and $H= \Phi_9(2111)b_r$. As $p\geq 5$, 
We have  
$$\Phi _9(2111)a=\langle \alpha, \alpha_i, 1 \leq i \leq 4\mid [\alpha_j, \alpha]= \alpha_{j+1}, \alpha^p= \alpha_4, \alpha{_1}^{p}= \alpha_{j+1}^{p}=1, j=1,2,3 \rangle,$$
$$\Phi_9(2111)b_r=\langle \alpha^{\prime}, \alpha^{\prime}_i,1\leq i \leq 4 \mid [\alpha^{\prime}_j, \alpha^{\prime}]= \alpha^{\prime}_{j+1}, {\alpha^{\prime}}_1^{(p)}= {\alpha^{\prime}}_4^k, {\alpha^{\prime}}^p= {\alpha^{\prime}}_{j+1}^{(p)}=1,j=1,2,3 \rangle.$$
Here $Z(G)=\langle \alpha_4 \rangle$ and $Z(H)=\langle \alpha'_4\rangle$ and
  $$G/Z(G)= \Phi_3{(1^4)}=\langle \bar{\alpha}, \bar{\alpha}_i, 1 \leq i \leq 3\mid [\bar{\alpha}_j, \bar{\alpha}]= \bar{\alpha}_{j+1}, \bar{\alpha}^p= \bar{\alpha}{_j}^{p}= \bar{\alpha}_{3}^{p}=1, j=1,2\rangle.$$
We also have $H/Z(H)= \Phi_3(1^4)$.
By Theorem \ref{Schurp5}, the Schur multipliers of $G$ and $H$ are isomorphic to $C_p$.
It follows from \cite[Table 3]{groupsp5} and Theorem \ref{capable} that the rows are exact in the following diagram.
\[
\xymatrix{
	1 \ar[r] &  \mathrm{Hom}(Z(G), \mathbb C^\times) \ar[d] ^{\mathrm{\bar{i}}}\ar[r] ^{\mathrm{tra}_1} & \Ho^2(G/Z(G), \mathbb C^\times) \ar[d]^{\mathrm{\bar{\phi}}}  \ar[r]\ar[r]^{\mathrm{\operatorname{inf}}_{1}} & \Ho^2(G, \mathbb C^\times) \ar[r] &  1 ,\\
	1 \ar[r] &  \mathrm{Hom}(Z(H), \mathbb C^\times) \ar[r] ^{\mathrm{tra}_2} & \Ho^2(H/Z(H), \mathbb C^\times) \ar[r]\ar[r]^{\mathrm{\operatorname{inf}}_{2}} & \Ho^2(H, \mathbb C^\times) \ar[r] &  1 .
}
\]
Here $\mathrm{tra}_1:\mathrm{Hom}(Z(G), \mathbb C^\times ) \rightarrow \Ho^2(G/Z(G), \mathbb C^\times)$ is defined as follows: consider a section $\mu:  G/Z(G) \to G$ defined by $\mu(\bar{\alpha}^{i_1} \bar{\alpha}_1^{ i_2} \bar{\alpha}_2^{i_3}\bar{\alpha}_3^{i_4})=\alpha^{i_1} \alpha_1^{ i_2} \alpha_2^{i_3}\alpha_3^{i_4}$.
	Then for $X=\bar{\alpha}^{i_1} \bar{\alpha}_1^{ i_2} \bar{\alpha}_2^{i_3}\bar{\alpha}_3^{i_4}$ and $Y=\bar{\alpha}^{i'_1} \bar{\alpha}_1^{ i'_2} \bar{\alpha}_2^{i'_3}\bar{\alpha}_3^{i'_4}$ in $G/Z(G)$,
we have
\begin{flalign}
XY &=(\bar{\alpha}^{i_1} \bar{\alpha}_1^{ i_2} \bar{\alpha}_2^{i_3}\bar{\alpha}_3^{i_4})(\bar{\alpha}^{i_1^\prime} \bar{\alpha}_1^{i_2^\prime} \bar{\alpha}_2^{i_3^\prime}\bar{\alpha}_3^{i_4^\prime})\nonumber\\
% & =\bar{\alpha}^{i_1}\bar{\alpha}_1^{ i_2} (\bar{\alpha}_2^{i_3}\bar{\alpha}^{i_1^\prime})\bar{\alpha}_1^{i_2^\prime} \bar{\alpha}_2^{i_3^\prime}\bar{\alpha}_3^{i_4+i_4^\prime}\nonumber\\
% & =\bar{\alpha}^{i_1}\bar{\alpha}_1^{ i_2} (\bar{\alpha}^{i_1^\prime}\bar{\alpha}_2^{i_3} \bar{\alpha}_3^{i_1^\prime i_3}) \bar{\alpha}_1^{i_2^\prime} \bar{\alpha}_2^{i_3^\prime}\bar{\alpha}_3^{i_4+i_4^\prime}~ (\mathrm{as} ~[\bar{\alpha}_2^{i_3},\bar{\alpha}^{i_1^\prime}]=\bar{\alpha}_3^{i_1^\prime i_3})\nonumber\\
% &=\bar{\alpha}^{i_1}(\bar{\alpha}_1^{ i_2} \bar{\alpha}^{i_1^\prime})\bar{\alpha}_2^{i_3}  \bar{\alpha}_1^{i_2^\prime} \bar{\alpha}_2^{i_3^\prime}\bar{\alpha}_3^{{i_1^\prime i_3}+i_4+i_4^\prime}\nonumber\\
&= \bar{\alpha}^{i_1+i_1^\prime}  \bar{\alpha}_1^{i_2+i_2^\prime} \bar{\alpha}_2^{i_1^\prime i_2+i_3+i_3^\prime}\bar{\alpha}_3^{i_2{i_1^\prime \choose 2}+{i_1^\prime i_3}+i_4+i_4^\prime}\nonumber.
\end{flalign}
Now  in $G$, we have,
\begin{flalign}
\mu(X)\mu(Y)& =(\alpha^{i_1} \alpha_1^{ i_2} \alpha_2^{i_3}\alpha_3^{i_4})(\alpha^{i_1^\prime} \alpha_1^{i_2^\prime} \alpha_2^{i_3^\prime}\alpha_3^{i_4^\prime})\nonumber\\
%&=\alpha^{i_1} \alpha_1^{ i_2} \alpha_2^{i_3}(\alpha_3^{i_4}\alpha^{i_1^\prime}) \alpha_1^{i_2^\prime} \alpha_2^{i_3^\prime}\alpha_3^{i_4^\prime}\nonumber\\
% &=\alpha^{i_1} \alpha_1^{ i_2} \alpha_2^{i_3}(\alpha^{i_1^\prime}\alpha_3^{i_4}\alpha_4^{i_1^\prime i_4}) \alpha_1^{i_2^\prime} \alpha_2^{i_3^\prime}\alpha_3^{i_4^\prime}~ (\mathrm{as} ~ [\alpha_3^{i_4},\alpha^{i_1^\prime}]=\alpha_4^{i_1^\prime i_4})\nonumber\\
% &=\alpha^{i_1} \alpha_1^{ i_2} (\alpha_2^{i_3}\alpha^{i_1^\prime}) \alpha_1^{i_2^\prime} \alpha_2^{i_3^\prime}\alpha_3^{i_4+i_4^\prime}\alpha_4^{i_1^\prime i_4}\nonumber\\
% &=\alpha^{i_1} \alpha_1^{ i_2} (\alpha^{i_1^\prime}\alpha_2^{i_3}\alpha_3^{i_1^\prime i_3}\alpha_4^{i_3{i_1^\prime \choose 2}}) \alpha_1^{i_2^\prime};  \alpha_2^{i_3^\prime}\alpha_3^{i_4+i_4^\prime}\alpha_4^{i_1^\prime i_4}~ (\mathrm{as} ~ [\alpha_2^{i_3},\alpha^{i_1^\prime}]=\alpha_3^{i_1^\prime i_3}\alpha_4^{i_3{i_1^\prime \choose 2}})\nonumber\\
&=\alpha^{i_1+i_1^\prime}  \alpha_1^{i_2+i_2^\prime} \alpha_2^{i_1^\prime i_2+i_3+i_3^\prime}\alpha_3^{i_2{i_1^\prime \choose 2}+{i_1^\prime i_3}+i_4+i_4^\prime}\alpha_4^{i_1^\prime i_4+i_3{i_1^\prime \choose 2}+i_2{i_1^\prime \choose 3}}\nonumber.
\end{flalign}
Therefore, for $\chi \in \mathrm{Hom}(Z(G), \mathbb C^\times)$, we have $$\mathrm{tra_1(\chi)}(X,Y)=\chi\big(\mu(X)\mu(Y)\mu(XY)^{-1}\big)=\chi(\alpha_4)^{i_1^\prime i_4+i_3{i_1^\prime \choose 2}+i_2{i_1^\prime \choose 3}}.$$

Now consider the group $H$. Then
 $H/Z(H)= \Phi_3{(1^4)}=\langle {\bar{\alpha}}^{\prime}, {\bar{\alpha}}^{\prime}_i, 1 \leq i \leq 3\mid [{\bar{\alpha}}^{\prime}_j, {\bar{\alpha}}^{\prime}]= {\bar{\alpha}}^{\prime}_{j+1}, {\bar{\alpha}}^{\prime p}= \bar{\alpha}{_i}^{\prime p}= \bar{\alpha}_{3}^{\prime p}=1, j=1,2\rangle.$ 
One can check that   $\mathrm{tra}_2:\mathrm{Hom}(Z(H), \mathbb C^\times ) \rightarrow \Ho^2(H/Z(H), \mathbb C^\times)$ is also defined in the same manner as above. Hence,  for \\
$X=(\vt)^{i_1}(\vt_1)^{i_2} (\vt_2)^{i_3}(\vt_3)^{i_4}, 
Y=(\vt)^{i'_1} (\vt_1)^{ i'_2} (\vt_2)^{i'_3}(\vt_3)^{i'_4}\in  H/Z(H)$
we get 
\[
\mathrm{tra_2(\chi)}(X,Y)=\chi(\alpha '_4)^{i_1^\prime i_4+i_3{i_1^\prime \choose 2}+i_2{i_1^\prime \choose 3}} .
\]
Consider the isomorphisms $i: Z(H) \to Z(G)$ and $\phi:  H/Z(H) \to  G/Z(G)$ defined on the generator by $i(\bar{\alpha}_4)=\alpha_4$ and 

$$\phi((\vt)^{i_1}(\vt_1)^{i_2} (\vt_2)^{i_3}(\vt_3)^{i_4} )=\bar{\alpha}^{i_1} \bar{\alpha}_1^{i_2} \bar{\alpha}_2^{i_3}\bar{\alpha}_3^{i_4}$$ 
respectively.
Then, it is easy to check that the induced isomorphisms $\bar{i}$ and $\bar{\phi}$ will make the above diagram commutative, i.e., $\bar{\phi}\circ \tra_1=\tra_2\circ \bar{i}$.
Hence, by Corollary \ref{result1}, $G \sim_\mathbb C H$.

Using the same argument as above, we have  $\Phi_6(2111)a \sim_\mathbb C   \Phi_6(2111)b_r$.\\

Now it is enough to prove that $\Phi_6(2111)a \sim_\mathbb C \Phi_9(2111)a$.
Let $G_1=\Phi_6(2111)a $ and $G_2=\Phi_9(2111)a $. Then
\begin{eqnarray*}
    G_1=\Phi_6(2111)a=&&\langle \alpha_1, \alpha_2,\beta,\beta_1,\beta_2 \mid [\alpha_1,\alpha_2]=\beta,[\beta,\alpha_i]=\beta_i,\alpha_1^p=\beta_1,\\
    && \alpha_2^p=\beta^p=\beta_i^p=1 ~(i=1,2)\rangle,
\end{eqnarray*} 
and 
\begin{eqnarray*}
\tilde{G}_1 = && \langle\alpha_1, \alpha_2,\beta,\beta_i, 1\leq i \leq 3 \mid [\alpha_1,\alpha_2]=\beta,[\beta,\alpha_j]=\beta_j, [\beta_2,\alpha_2]=\beta_3, \alpha_1^p=\beta_1, \\
&& \alpha_2^p=\beta^p=\beta_i^p=1 ~(j=1,2)\rangle
\end{eqnarray*}
is a representation group of $G_1$ of order $p^6$ and $\tilde{G}_1/\langle \beta_3 \rangle \cong G_1 $. Now consider the abelian normal subgroup $N=\langle \beta, \beta_i, 1\leq i \leq 3 \rangle$ of $\tilde{G}_1$ of order $p^4$. Take $A_1=\langle \beta_3 \rangle.$
Let $\chi' \in \mathrm{Irr}(\langle \beta_3 \rangle)$ and  $\chi \in \mathrm{Irr}(N)$ such that $\chi |_{\langle \beta_3 \rangle }={\chi'}$. Let $\chi(\beta)=\xi^{i_1}, \chi(\beta_1)=\xi^{i_2}, \chi(\beta_2)=\xi^{i_3}, \chi(\beta_3)=\xi^{i_4}$, where $\xi$ is a primitive $p$-th root of unity and $0\leq i_1,i_2,i_3, i_4\leq (p-1)$.
Now any element $g$ of $\tilde{G}_1$ can be written as $g=\alpha_1^m\alpha_2^n n'$, for some $n'\in N$ and any element $n_1 \in N$ is of the form $n_1=\beta^x\beta_2^z h $, for some $h \in Z(\tilde{G}_1)$. Therefore,
we have
\begin{flalign}
	\chi^{\alpha_1^m\alpha_2^n}(\beta^x\beta_2^zh )=\chi(\beta^x\beta_2^zh ) 
	& \iff \chi(\alpha_2^{-n}\alpha_1^{-m}\beta^x\beta_2^z \alpha_1^m\alpha_2^n)=\chi(\beta^x\beta_2^z ) \nonumber\\
	& \iff \chi(\beta_1^{mx}\beta_2^{nx}\beta_3^{nz+x {n \choose 2}})=1,\nonumber \\
 & \iff \xi^{i_2{mx}+i_3{nx}+i_4{(nz+x {n \choose 2}})}=1.\nonumber
\end{flalign}
Thus $\alpha^m\alpha_1^nn' \in I_{\tilde{G}_1}(\chi)$ if and only if $\xi^{i_2{mx}+i_3{nx}+i_4{(nz+x {n \choose 2}})}=1.$
Now the following cases occur.

\textbf{Case (a)} Suppose $i_4=0$. If $i_2=i_3=0$, then $I_{\tilde{G_1}}(\chi)=\tilde{G_1}$, otherwise $I_{\tilde{G_1}}(\chi)$ is of order $p^5$.
Hence, it is easy to check that there are $p^2$ many one dimensional and $(p^3-1)$ many $p$-dimensional irreducible  representations.

\textbf{Case (b)} Suppose $i_4\neq 0$. If $i_2=0$, then $I_{\tilde{G_1}}(\chi)= \langle N, \alpha_1 \rangle$ is of order $p^5$. If $i_2\neq 0$, then $I_{\tilde{G_1}}(\chi)= N$. Hence, there are  $(p^3-p^2)$ many $p$-dimensional irreducible representations and $(p-1)^2$ many $p^2$-dimensional irreducible representations.

Thus, we have 
$$\mathbb C \tilde{G}_1 \cong \oplus_{i=1}^{p^2} \mathbb C \oplus_{i=1}^{2p^3-p^2-1} \mathbb C^{p\times p}\oplus_{i=1}^{p^2-2p+1} \mathbb C^{p^2\times p^2}.$$
\\

Next consider the group 
\begin{eqnarray*}
G_2=\Phi _9(2111)a=\langle \alpha, \alpha_i, 1 \leq i \leq 4\mid [\alpha_j, \alpha]= \alpha_{j+1}, \alpha^p= \alpha_4, \alpha{_1}^{(p)}= \alpha_{j+1}^{(p)}=1, j=1,2,3 \rangle.
\end{eqnarray*}
Then
\begin{eqnarray*}
\tilde{G}_2 = && \langle\alpha_1, \alpha_2,\beta,\beta_i, 1\leq i \leq 3 \mid [\alpha_1,\alpha_2]=\beta,[\beta,\alpha_j]=\beta_j, [\beta_2,\alpha_2]=\beta_3, \alpha_2^p=\beta_3, \\
&& \alpha_1^p=\beta^p=\beta_i^p=1 ~(j=1,2)\rangle
\end{eqnarray*}
is a representation group of $G_2$ of order $p^6$ and $\tilde{G}_2/\langle \beta_1 \rangle \cong G_2 $. Now consider the abelian normal subgroup $N=\langle \beta, \beta_i, 1\leq i \leq 3 \rangle$ of $\tilde{G}_2$ of order $p^4$. Take $A_2=\langle \beta_1 \rangle.$
Let $\chi' \in \mathrm{Irr}(\langle \beta_1 \rangle)$ and  $\chi \in \mathrm{Irr}(N)$ such that $\chi |_{\langle \beta_1 \rangle }={\chi'}$. Let $\chi(\beta)=\xi^{i_1}, \chi(\beta_1)=\xi^{i_2}, \chi(\beta_2)=\xi^{i_3}, \chi(\beta_3)=\xi^{i_4}$, where $\xi$ is a primitive $p^{th}$ root of unity and $0\leq i_1,i_2,i_3, i_4\leq (p-1)$.
Now 
\begin{flalign}
	\chi^{\alpha_1^m\alpha_2^n}(\beta^x\beta_2^zh )=\chi(\beta^x\beta_2^zh ) 
	& \iff \chi(\alpha_2^{-n}\alpha_1^{-m}\beta^x\beta_2^z \alpha_1^m\alpha_2^n)=\chi(\beta^x\beta_2^z ) \nonumber\\
	& \iff \chi(\beta_1^{mx}\beta_2^{nx}\beta_3^{nz+x {n \choose 2}})=1,\nonumber \\
 & \iff \xi^{i_2{mx}+i_3{nx}+i_4{(nz+x {n \choose 2}})}=1\nonumber
\end{flalign}
Now the following cases occur.

\textbf{Case (a)} Suppose $i_2=0$. If $i_3=i_4=0$, then $I_{\tilde{G_2}}(\chi)=\tilde{G_2}$, otherwise $I_{\tilde{G_2}}(\chi)=\langle N, \alpha_1\rangle$ is of order $p^5$.
Hence, it is easy to check that there are $p^2$ many one dimensional and $(p^3-1)$ many $p$-dimensional irreducible representations.

\textbf{Case (b)} Suppose $i_2\neq 0$. If $i_4=0$, then $I_{\tilde{G_2}}(\chi)$ is of order $p^5$. If $i_4\neq 0$, then $I_{\tilde{G_2}}(\chi)= N$. Hence, there are  $(p^3-p^2)$ many $p$-dimensional representations and $(p-1)^2$ many $p^2$-dimensional irreducible  representations.

Thus, we have 
$$\mathbb C \tilde{G}_2 \cong \oplus_{i=1}^{p^2} \mathbb C \oplus_{i=1}^{2p^3-p^2-1} \mathbb C^{p\times p}\oplus_{i=1}^{p^2-2p+1} \mathbb C^{p^2\times p^2}.$$
By Theorem \ref{proj-ord}, for each non-trivial cocycle $\alpha$ of $G_i~(i=1,2)$
$$\mathbb C^{\alpha} \tilde{G}_i \cong  \oplus_{i=1}^{p^2} \mathbb C^{p\times p}\oplus_{i=1}^{p-1} \mathbb C^{p^2\times p^2}.$$
Define  an isomorphism $\sigma: \Hom(A_1, \mathbb C^\times) \to \Hom(A_2, \mathbb C^\times)$ on the generators by
$$\sigma(\chi) (\beta_1)=\chi(\beta_3).$$  Then, there is a dimension preserving bijective correspondence between the sets 
$\irr(\tilde{G}_1\mid \chi)$ and $\irr(\tilde{G}_2\mid \sigma(\chi))$. Hence, the result follows from Lemma \ref{bijection_ord}.
\\
\\

\noindent $(xvii)$  Observe that, the groups $ \tilde{G}_1=\Phi_{(42,1)}$ and $ \tilde{G}_2=G_{(43,2r)} $ given in \cite{newman} are representation groups of $G_1=\Phi_6(221)b_{\frac{1}{2}(p-1)}$ and $G_2=\Phi_6(221)d_0$ respectively. In \cite[Pg. 2]{minimal}, it is mentioned that the  details given in \cite[Table 4.1]{James} are correct. Using those details, it follows that, for $i=1,2$,  %By  \cite{minimal} and \cite[Table 4.1]{James} it follows that, for $i=1,2$,
\begin{eqnarray*}
   \mathbb C \tilde{G}_i & \cong & \oplus_{i=1}^{p^2} \mathbb C \oplus_{i=1}^{(p^3-1)} \mathbb C^{p\times p} \oplus_{i=1}^{(p^2-p)} \mathbb C^{p^2\times p^2} \\
   & \cong &  \mathbb C {G_i} \oplus_{i=1}^{(p^2-p)} \mathbb C^{p^2\times p^2}
\end{eqnarray*}
Hence,  for each non-trivial cocycle $\alpha$ of $G_i$, we have
$$\mathbb C^{\alpha} G_i \cong  \oplus_{i=1}^{p} \mathbb C^{p^2\times p^2}.$$ 
Therefore, $G_1 \sim_\mathbb C G_2$.
\\

Proof of $(iv),  (x),  (xviii), (xxii)$ follows similarly as the proof of $\Phi_9(2111)a \sim_\mathbb C \Phi_9(2111)b_r$ in $(xvi)$.

\end{proof} 
\begin{remark}

We face difficulty finding  representation groups of the groups belonging to the set $S$. 
It follows from \cite[Table 4.1]{James} that for the groups $ G \in \Phi_{25}, H \in  \Phi_{26}$, and $K \in \Phi_{34}$, we have  $G/Z(G) \cong \Phi_3(221)b_1,  H/Z(H) \cong\Phi_3(221)b_\nu, K/Z(K) \cong \Phi_4(221)b$, and each of these groups $G, H$ and $K$ has  ordinary irreducible representations of degree $p^2$. Hence, by \cite[Theorem 3.2]{PS}, each of  $\Phi_3(221)b_r$ and $\Phi_4(221)b$ has some  irreducible projective representations of degree $p^2$. Therefore, by Theorem  \ref{Schurp5}, we conclude that the groups $\Phi_3(221)b_r,\Phi_4(221)b$ can not be in the list $(i)$ of Theorem \ref{groupsorderp5}. Also, we use Theorem  \ref{Schurp5} and  the isomorphism of  complex group algebras to propose the following open question.
\end{remark}

\noindent \textbf{Open question:}
The non-abelian groups of order $p^5$, where $p \geq 5$ is a prime,  consist of the following equivalence classes w.r.t the relation $\sim_\mathbb C$. \\
(a) All the equivalences classes as in Theorem \ref{groupsorderp5} except (xi) and (xxi), \\
(b) equivalence class (xi) changes to $\{\Phi_3(2111)d, \Phi_3(2111)e, \Phi_3(221)b_r,\Phi_4(221)b\},$\\
 (c) equivalence class (xxi) changes to $\{\Phi_{6}(1^5), \Phi_{9}(1^5)\}$, \\
 (d)   $\{ \Phi_4(221)d_{\frac{1}{2}(p-1)}, \Phi_4(221)f_0\}$.

\begin{remark}
  By \cite[Lemma 4.2]{MS},  for two groups $G,H$ of order $p^4$ if the following conditions are satisfied,\\
(i) $\mathbb CG \cong \mathbb CH$,\\
(ii) $\Ho^2(G, \mathbb C^\times) \cong \Ho^2(H, \mathbb C^\times),$\\
(iii) $G, H$ are not of central type, \\
  then $G \sim_\mathbb C H$.
Observe that this statement is not true for the groups of order $p^5$ by taking the groups $G=\Phi_6(2111)a, H=\Phi_6(221)d_0$.
\end{remark}

% \sumana{Question: Groups $(C_{p^n} \rtimes C_{p^n}) \rtimes C_p$ are related to $(C_{p^n} \times C_{p^n}) \rtimes C_p$ or not.}

%
%\section{Some Groups of order $p^6$}
%
%
%\begin{thm}
%Let $G$ be a non-abelian group of order $p^6$ such that the following conditions are satisfied.
%\begin{itemize}
%\item There is a central subgroup $Z$ of $G$ of order $p^2$ such that $|Z|\geq p^2$ and $G/Z$ is not of central type.
%\item The sequence $1 \to \Hom(Z, \mathbb C^\times)  \xrightarrow{\mathrm{tra}} \Ho^2(G/Z, \mathbb C^\times)  \xrightarrow{\mathrm{inf}} \Ho^2(G, \mathbb C^\times) \to 1$ is exact.
%\end{itemize}
%Then $$\mathbb C^\alpha G \cong \oplus_{i=1}^{p^4} \mathbb C^{p\times p}.$$
%\end{thm}
%
%
%
%\begin{corollary}
%If $G_i, i=1,2$ are two groups of order $p^6$ such that 
%\begin{itemize}
%\item $\mathbb CG_1\cong \mathbb CG_2$.
%\item There is a central subgroup $Z_i$ of $G_i$  such that $|Z_i|\geq p^2$ and $G_i/Z_i$ is not of central type and 
%$ \Ho^2(G_1/Z_1, \mathbb C^\times) \cong  \Ho^2(G_2/Z_2, \mathbb C^\times) $.
%\item The sequence $1 \to \Hom(Z_i, \mathbb C^\times)  \xrightarrow{\mathrm{tra}} \Ho^2(G_i/Z_i, \mathbb C^\times)  \xrightarrow{\mathrm{inf}} \Ho^2(G_i, \mathbb C^\times) \to 1$ is exact.
%\end{itemize}
%Then $G_1\sim_{\mathbb C} G_2$.
%\end{corollary}
%
%
%
%\sumana{Some examples from the classification}
%As an application of the above results we have the following examples.
%
%\sumana{Question: What happens to the groups of order $p^6$ which are of central type.}

\section*{Acknowledgements} 
\noindent The authors are thankful to the editor for providing valuable suggestions which has greatly
improved the presentation of the paper.
We thank Prof. Eamonn O'Brien for pointing out a representation group of the group $\Phi_2(32)a_2$.  We are also grateful to the anonymous referee for carefully and critically reading the manuscript and providing many valuable comments. The second named author acknowledges the research support of the  Department of Science and Technology (INSPIRE Faculty Ref No.\linebreak DST/INSPIRE/04/2023/001200), Govt. of India. First and third named authors thank the National Institute of Science Education and Research, Bhubaneswar for providing an
excellent research facility.

\bibliographystyle{amsplain}
\bibliography{twisted}
\end{document}